\documentclass[reqno]{amsart}
\usepackage{nicematrix}
\usepackage{tikz}
\usepackage{xcolor}
\usepackage{amsmath,bm,bbm,amsthm, amssymb}
\usepackage{mathtools}
\usepackage{fullpage}
\usepackage{color}
\usepackage{dsfont}
\usepackage{animate}
\usepackage{etoolbox}
\usepackage{comment}
\usepackage{pgf}
\usetikzlibrary{calc}
\usetikzlibrary{patterns}
\usetikzlibrary{arrows}
\usetikzlibrary{decorations.pathreplacing}
\usepackage[utf8]{inputenc}
\usepackage{pgfplots}

\usepackage{tcolorbox}
\usepackage{adjustbox}
\usepackage[final]{pdfpages}
\usepackage[ruled,vlined,linesnumbered,algosection,resetcount]{algorithm2e}
\usepackage{blkarray}
\usepackage{arydshln}
\usepackage{wrapfig}
\usepackage[colorlinks=true]{hyperref}

\theoremstyle{plain}
\newtheorem{theorem}{Theorem}[section]   
\newtheorem{lemma}[theorem]{Lemma}
\newtheorem{proposition}[theorem]{Proposition}
\newtheorem{corollary}[theorem]{Corollary}

\newtheorem{maintheorem}{Theorem}

\theoremstyle{definition}
\newtheorem{definition}[theorem]{Definition}

\theoremstyle{remark}

\def\R{\mathbb R}

\def\P{\mathbb P}

\def\E{\mathbb E}

\def\eps{\varepsilon}

\def\00{\mathbf 0}
\def\zz{\mathbf z}

\def\bet{\begin{theorem}}
\def\ent{\end{theorem}}
\def\bec{\begin{corollary}}
\def\enc{\end{corollary}}
\def\bep{\begin{proof}}
\def\enp{\end{proof}}
\def\bede{\begin{definition}}
\def\ende{\end{definition}}
\def\f{\frac}

\def\g{\gamma}
\def\la{\lambda}
\def\es{\emptyset}
\def\su{\subseteq}
\def\ms{\mathsf}
\def\co{\colon}

\def\mc{ \mathcal}
\def\ff{\infty}
\def\PP{\mc P}

\def\one{\mathds1}

\def\d{\mathrm d}

\renewcommand\le{\leqslant}
\renewcommand\ge{\geqslant}

\DeclarePairedDelimiter{\norm}{\lVert}{\rVert}

\def\e{\varepsilon}

\def\bel{\begin{lemma}}
\def\enl{\end{lemma}}
\def\im{\item}
\def\been{\begin{enumerate}}
\def\enen{\end{enumerate}}
\def\beit{\begin{itemize}}
\def\enit{\end{itemize}}

\def\sm{\setminus}

\def\k_d{\kappa_d}

\def\s{\sigma}
\def\co{\colon}

\def\k{\kappa}

\def\bepr{\begin{proposition}}
\def\enpr{\end{proposition}}

\usepackage{soul}
\def\Rex{R_{\ms{stab}}}

\def\tN{\textbf{N}}
\def\bN{\mathbb{N}}

\renewcommand{\theenumi}{\Alph{enumi}}
\usetikzlibrary{intersections, calc, angles, quotes}

\DeclareMathOperator*{\argmax}{arg max}

\begin{document}

\title{Central limit theorem for linear eigenvalue statistics of random geometric graphs}

\author{Christian Hirsch}
\author{Kyeongsik Nam}
\author{Moritz Otto}

\address{Department of Mathematics\\ Aarhus University \\ Ny Munkegade, 118, 8000, Aarhus C, Denmark.}
\email{hirsch@math.au.dk}

\address{Department of Mathematical Sciences\\ KAIST \\ South Korea}
\email{ksnam@kaist.ac.kr}

\address{Mathematical Institute
 \\ Leiden University \\ Netherlands
}
\email{m.f.p.otto@math.leidenuniv.nl}

\begin{abstract}Random spatial networks---that is, graphs whose connectivity is governed by geometric proximity---have emerged as fundamental models for systems constrained by an underlying spatial structure. A prototypical example is the random geometric graph, obtained by placing vertices according to a Poisson point process and connecting two vertices whenever their Euclidean distance is less than a certain threshold. Despite their broad applicability, the spectral properties of such spatial models remain far less understood than those of classical random graph models, such as Erd\H{o}s--R\'enyi graphs and Wigner matrices. The main obstacle is the presence of spatial constraints, which induce  highly nontrivial dependencies among edges, placing these models outside the scope of techniques developed for purely combinatorial random graphs.

In this paper, we provide the first rigorous analysis of Gaussian fluctuations for linear eigenvalue statistics of random geometric graphs. Specifically,  we establish central limit theorems for \(\textup{Tr}[\phi(A)]\), where \(A\) is the adjacency matrix and \(\phi\) ranges over a broad class of suitable (possibly non-polynomial) test functions. In the polynomial setting, we moreover obtain a \emph{quantitative} central limit theorem, including an explicit convergence rate to the limiting Gaussian law. We further obtain polynomial-test-function CLTs for other canonical random spatial networks, including \(k\)-nearest neighbor graphs and relative neighborhood graphs. Our results open new avenues for the study of spectral fluctuations in spatially embedded random structures and underscore the delicate interplay between geometry, local dependence, and spectral behavior.

\end{abstract}
\maketitle

\section{Introduction}
\label{sec:intro}

Spectral graph theory deduces structural and dynamic properties of a graph by studying the eigenvalues and eigenvectors of its adjacency matrix \cite{chung}. This methodology finds applications in diverse areas such as network theory, probability, and computer science. For example, the spectrum of the (normalized) adjacency matrix is closely tied to the mixing times of simple random walks on the graph. Spectral analysis also sheds light on key structural features, including community detection and connectivity properties. Furthermore in computer science, spectral information has been instrumental in developing techniques like spectral clustering, which has become a standard tool across a wide range of applications \cite{luxburg}.

The spectrum of adjacency matrices of large random graphs has been a central area of research, with deep connections to random matrix theory. The most extensively investigated model is the  Erd\H{o}s--R\'enyi graph (ERG) $\mc G_{n,p}$, where every edge is included independently with probability $p$. In the dense regime, where the average vertex degree $np$ converges to infinity, the empirical spectral distribution (ESD) of the (normalized) adjacency matrix converges to the Wigner semicircle law. Whereas, when the average vertex degree $np$ remains constant, the limit of ESD is supported on the entire real line and has a dense set of atoms \cite{chayes1986density}.

Beyond the law of large numbers (LLN), understanding the central limit theorem (CLT) for eigenvalues of random graphs plays a fundamental role in capturing the nature of fluctuations and in revealing finer spectral statistics.
It is known that the normalized linear eigenvalue statistics of the adjacency matrix of ERGs $\mc G_{n,p}$ converges to the Gaussian, both in a dense \cite{shch2} (i.e. $np\rightarrow \infty$ and $p\rightarrow 0$) and a constant average degree regime \cite{shch1,cltsparse} (i.e., $np = \text{constant}$).

While the ERG is an essential model in network theory, it is purely combinatorial: edges appear with a fixed probability, independent of each other. However, many real-world networks--such as those in transportation and sensor networks--are inherently spatial. In these networks, nodes are embedded in a geometric space, and edges form based on physical proximity rather than purely probabilistic rules.

This observation has driven significant interest in spatial random networks, particularly the random geometric graph (RGG), also known as the Gilbert graph \cite{penrose,gilbert}. Unlike the ERG, where edge probabilities are independent of each other, the RGG introduces spatial dependence, where edges are formed if the distance between vertices is less than a certain threshold. This dependence complicates the spectral analysis of RGG and limits the direct applicability of results from the ERG. 
Despite these complications, obtaining a thorough understanding of the spectrum of RGG is crucial, both for addressing fundamental theoretical questions and for practical applications in machine learning on graph-structured data. To the best of the authors' knowledge, the current state of research on the eigenvalues of RGG can be summarized as follows:
\been
\im LLN for the ESD of Euclidean matrices was obtained in a thermodynamic regime \cite[Theorem 2]{bordenave}. Precisely, let $A_n:=\{F_n(X_i - X_j)\}_{1\le i, j\le n}$, where $\{X_i\}_{1\le i \le n}$ are i.i.d. uniformly distributed in $[0, 1]^d$ and $F_n(x):= F(n^{1/d}x)$ for some measurable function $F :[0, 1]^d \rightarrow \R $. Note that the special case $F(x) := \one\{|x| \le r\}$ ($r>0$ is a constant) corresponds to the adjacency matrix of RGG in a thermodynamic regime, i.e. the average vertex degree remains constant as $n\rightarrow \infty$. It was proved in \cite[Theorem 2]{bordenave} that the ESD of $A_n$ converges weakly to a certain distribution. In a follow-up work, high-dimensional settings are considered in \cite{bordenave2}.
\im In \cite{avra}, the authors establish a quantitative bound, in Hilbert--Schmidt norm, between the regularized normalized Laplacian of an RGG and a suitable deterministic geometric graph. Their analysis covers both the dense regime, in which the average degree diverges, and the thermodynamic regime, in which it remains uniformly bounded.
\im In addition to the ESD, the spectral edge was studied for the RGG in a dense regime \cite{adhikari}. It was shown that when the average vertex degree is proportional to the number of vertices, the limiting spectral gap of the normalized Laplacian is strictly less than 1. This stands in sharp contrast to the  ERG $\mc G_{n,p}$, where the spectral gap of the normalized Laplacian converges to 1 whenever $ p \gg \log n / n$.  
\enen

Investigating the CLT for linear eigenvalue statistics offers a powerful lens through which to discern global and mesoscopic structural properties of large random networks.
Although LLN-type results have been established for RGG, a critical gap remains: no CLT has yet been demonstrated for their spectral distribution.
In this paper, we fill this gap by proving   the first CLTs for linear eigenvalue statistics of RGG in the sparse regime of constant average degree. This regime captures a nontrivial balance between connectivity and sparsity and is particularly relevant for modeling large-scale spatial networks; see \cite{strictIneq,nestmann}.

~

To summarize, the main contributions of our work are as follows.

\begin{enumerate}

\item We establish, to the best of our knowledge, the first CLT for linear eigenvalue statistics of random geometric graphs that applies to a broad class of \emph{non-polynomial} test functions.

\item For polynomial test functions, we further obtain a \emph{quantitative} CLT, including an explicit convergence rate (in Wasserstein distance) toward the limiting Gaussian law.

\item We also prove polynomial-test-function CLTs for other canonical random spatial networks, including \(k\)-nearest neighbor graphs and relative neighborhood graphs.
 
\end{enumerate}

We briefly outline the proof strategy. We start with polynomial test functions, for which the relevant linear eigenvalue statistics admit a representation in terms of traces of suitable matrix powers. In this regime, the Malliavin--Stein method of \cite{mehler} is particularly well suited: it exploits the underlying geometric structure of the model and, moreover, provides quantitative rates of convergence.

We then extend the CLT to a broad class of smooth (non-polynomial) test functions. This extension is highly nontrivial. Classical arguments developed for Erdős–Rényi graphs—such as martingale methods or Lindeberg-style replacements—depend critically on edge independence and therefore fail in our spatial setting, where geometric correlations are present. To overcome this obstruction, we adapt the Fourier-analytic approach introduced in \cite{shch1} for the Erdős–Rényi case, and implement substantial modifications to accommodate spatial dependencies. The resulting analysis disentangles these correlations and shows that, even in the absence of independence, our modified Fourier-analytic method remains robust enough to establish CLTs for spatial random networks.

\section{Model and main results}
\label{sec:model} 
Let $\PP_n$ be a homogeneous Poisson point process with intensity 1 on the window
$$W_n := [-n^{1/d}/2, n^{1/d}/2].$$
We write $N=N_n:= |\PP_n|$ for the random number of points.
 Let $G(\PP_n)$ denote a graph on the vertex set $\PP_n$,  where edges are included by certain rules. 
A central object associated with $G(\PP_n)$ is the {adjacency matrix} 
\begin{align}
\label{eq:an}
A_n:= \{A(X,Y;\PP_n)\}_{X,Y\in\PP_n}:= \big\{\one\{X, Y\text{ connected by an edge in $G(\PP_n)$}\}\big\}_{X, Y \in \PP_n}.
\end{align}
Note that the size of $A_n$ is also random, as the number of points $N$ in the Poisson point process is random. 
We consider the (un-normalized) empirical measure
$$L_n := \sum_{i =1}^N \delta_{\la_{i}}.$$
We remark that as the spectrum of the adjacency matrices of the considered networks do not depend on the particular labelling of the points in $\PP_n$, the empirical spectral distribution $L_n$ is well-defined.

   We now define our model of random geometric graphs.
For $r>0$, the random geometric graph RGG($r$) is given as follows. An edge is placed between two vertices whenever their Euclidean distance is at most $r$, i.e. for any two points $X$ and $Y$ in the Poisson point process,
\begin{align*}
    \text{$X$ and $Y$ connected} \Leftrightarrow |X-Y| \le r.
\end{align*}
We consider RGG($r$) with a vertex set $\PP_n,$ and let $A_n$ be its adjacency matrix.

The main results of this paper are central limit theorems for the linear eigenvalue statistics 
\begin{align} \label{basic}
  L_n(f) = \int f \d L_n = \textup{Tr} [f(A_n)].
\end{align} 
for a broad class of test functions \(f:\R\to\R\). Our  main theorem establishes a CLT for {general twice weakly differentiable} test functions satisfying a mild weighted Sobolev condition.
This result goes well beyond the polynomial setting and is expected to be appealing in applications
where one needs genuinely non-polynomial observables.
We then  provide \emph{quantitative} normal approximation for   {polynomial} test functions:
we obtain an explicit \(n^{-1/2}\)-rate in Wasserstein distance. 

\subsection{General test functions}
\label{sss:fun}

Throughout the paper, \(\mc N(0,\sigma_f^2)\) denotes a centered normal random variable with variance \(\sigma_f^2\).

%
%
\begin{maintheorem}[CLT for linear eigenvalue statistics; general test functions]
\label{thm:fun}  Fix \(r>0\), and let \(A_n\) denote the adjacency matrix of \(\mathrm{RGG}(r)\) on the vertex set \(\PP_n\).
Let \(f: \mathbb R \rightarrow \mathbb R\) be a twice weakly differentiable function with \(f(0)=0\)
such that for some constant \(c\neq 0\),
\begin{align}\label{condition}
 \int_\R |f(x)|^2 \textup{sech}^2(cx)\, \d x
 +\int_\R |f'(x)|^2 \textup{sech}^2(cx)\, \d x
 +\int_\R |f''(x)|^2 \textup{sech}^2(cx)\, \d x
 < \infty,
\end{align}
where \(\textup{sech} (y) :=  \frac2{e^{y} + e^{-y}}\) denotes the reciprocal of
\(\cosh (y):= \frac{e^y+e^{-y}}{2}\).
Then
the central limit theorem holds: There exists $	\s_f^2 \ge 0$ such that as $n\rightarrow \infty,$
\begin{align}
	\label{eq:fun2}
  \frac{\textup{Tr} [f(A_n)] - \E \textup{Tr} [f(A_n)] }{\sqrt n}  \overset{\textup{d}}{\rightarrow} \mc N (0,\sigma_f^2).
\end{align}
  
\end{maintheorem}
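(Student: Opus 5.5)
The plan is to go from polynomial test functions to general $f$ by approximation. The approximation uses the characteristic-function (resolvent-type) representation $\textup{Tr}[e^{itA_n}]$, following the Fourier-analytic strategy of \cite{shch1}.

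\textbf{Step 1: polynomials.} For $f(x)=x^k$, $\textup{Tr}[A_n^k]$ is a sum over closed walks of length $k$. It can be written as $\sum_{X\in\PP_n}\xi(X,\PP_n)$, where $\xi(X,\PP_n)$ counts closed $k$-walks from $X$. This score depends only on $\PP_n\cap B(X,kr/2)$, so the functional is a stabilizing, exponentially localized Poisson functional. The second-order Poincaré/Malliavin--Stein bounds of \cite{mehler} then apply: the first- and second-order difference operators $D_x F$ and $D^2_{x,y}F$ are supported on $|x-y|\le kr$. Their moments are controlled by Poisson moments of the number of points in a ball. This yields a CLT with variance limit $\sigma_p^2=\lim n^{-1}\textup{Var}\,\textup{Tr}[p(A_n)]$, whose existence follows from standard translation-invariance and stabilization arguments.

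\textbf{Step 2: a uniform variance bound.} The key estimate is
\[
\textup{Var}\big(\textup{Tr}[g(A_n)]\big)\le C\,n\,\|g\|_{*}^2 ,
\]
where $\|\cdot\|_*$ is the weighted Sobolev norm in \eqref{condition} and $C$ does not depend on $n$ or $g$. To get it, write $g(x)=\int \hat g(t)e^{itx}\,\d t$. This is legitimate after multiplying by a weight: the sech$^2$-weighted $H^2$ condition gives $\hat g$ analytic in a strip, or, equivalently, lets us expand $g$ in a basis adapted to the weight. We then need
\[
\textup{Var}\,\textup{Tr}[e^{itA_n}]\le C n (1+|t|)^{2}e^{c'|t|}
\]
or a similar growth bound. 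For this we use the Poincaré inequality $\textup{Var} F\le \E\int (D_xF)^2\,\d x$. Adding a point $x$ changes $A_n$ by a rank-$\le2$ perturbation supported on the row and column of $x$. By Duhamel's formula, $D_x\textup{Tr}[e^{itA_n}]$ is bounded by $C|t|$ times a quantity depending only on the degree $\deg(x)$; a bound of the form $\min(|t|\deg(x), \deg(x)+1)$ is plausible via interlacing. Interlacing gives $|D_x \textup{Tr}\,g(A)|\le C\|g'\|_\infty$-type or total-variation bounds. In fact, I would use this directly: adding a vertex is a rank-2 perturbation, so the eigenvalue counting functions shift by at most $2$. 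Hence $|D_x\textup{Tr}\,g(A)|\le 2\,\mathrm{TV}(g;[-M,M])+|g(0)|$, where $M$ is the spectral radius of the enlarged matrix. Locally, the spectral radius is at most the maximal degree in the cluster. Since the spectrum of $A_n$ is a union over connected components, only the component containing $x$ matters. The variation of $g$ on $[-\Delta,\Delta]$, with $\Delta$ a local max degree, is controlled by $\int|g'|$ over that interval. By Cauchy--Schwarz with weight $\cosh^2(c\Delta)$, this is at most $\|g\|_*\,\cosh(c\Delta)\sqrt{2\Delta}$. Local degrees have Poisson-type tails, decaying like $e^{-\Delta\log\Delta}$, so $\E\cosh^2(c\Delta)\Delta<\infty$. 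This yields the variance bound with $C$ uniform in $n$.

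\textbf{Step 3: approximation and conclusion.} Polynomials are dense in the weighted Sobolev space defined by \eqref{condition}; weighted Weierstrass/Hermite-type density holds because sech$^2$ decays exponentially. Choose polynomials $p_m$ with $\|f-p_m\|_*\to0$, keeping $f(0)=0$ by adjusting constants. Step 2 gives $\sup_n n^{-1}\textup{Var}(\textup{Tr}[(f-p_m)(A_n)])\to0$. Combined with Step 1, a standard triangular approximation argument (e.g.\ Billingsley's Theorem 3.2) shows that $\sigma_{p_m}^2$ is Cauchy with limit $\sigma_f^2$ and that \eqref{eq:fun2} holds.

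The main obstacle is Step 2. The local spectral radius bound must be genuinely local: it must use the maximal degree within the connected component of $x$, or use a truncation of the graph in which long components are rare. The difficulty is that in supercritical regimes components can be giant, and then spectral radius $\approx$ global max degree $\sim\log n/\log\log n$. I would handle this by using the interlacing shift bound only for the eigenvalue counting functions, integrated against $|g'|$ weighted by the spectral measure. The resulting contribution is $\int |g'(\lambda)|\,\d\lambda$ over the region $|\lambda|\le \|A_n\|$. I would then bound $\E\cosh^2(c\|A_n\|)$ by a polynomial in $n$ and compensate using exponential tails of $\|A_n\|$. If this fails, I would fall back on a Gaussian-localized trace via the Fourier representation as in \cite{shch1}.
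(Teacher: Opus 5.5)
Your Steps 1 and 3 match the paper's architecture: a Malliavin--Stein CLT for polynomials, followed by the approximation argument of Proposition~\ref{aux prop}. Step 2, however, has a genuine gap, and it is the step that carries the whole extension, namely the uniform bound $\textup{Var}(\textup{Tr}[g(A_n)])\le Cn\|g\|_*^2$.

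The interlacing estimate $|D_x\textup{Tr}\,g(A_n)|\le 2\,\mathrm{TV}(g;[-M,M])$ is correct. But for $g$ in the weighted class it makes you pay a factor $\cosh(cM)$, where $M$ is the spectral radius of the component of $x$. The theorem is claimed for every $r>0$. For $d\ge2$ and $r$ above the continuum percolation threshold, $x$ lies in the giant component with probability bounded below. That component's maximal degree and spectral radius are both of order $\log n/\log\log n$. Your bound on $\E(D_xF)^2$ is therefore of order $\exp(2|c|\log n/\log\log n)\to\infty$, and condition (ii) of Proposition~\ref{aux prop} is not obtained.

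The repair you suggest cannot work. $\|A_n\|\asymp\log n/\log\log n$ is the typical behaviour, not a tail event, so there is nothing for its tails to compensate.

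Your first route is also mis-specified. A function growing like $e^{|c||x|}$ has no Fourier representation with real frequencies. Moreover, an $e^{c'|t|}$ loss in $\textup{Var}\,\textup{Tr}[e^{itA_n}]$ could not be integrated against the transform of an $H^2$-type function.

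In the subcritical regime (e.g.\ $d=1$) your interlacing argument is plausible, and it would even need only the weighted norm of $g'$. It does not, however, cover the statement.

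The missing idea is to localize the exponential weight through spectral measures of vectors supported at $x$, rather than through component structure. The paper proceeds as follows.
\begin{enumerate}
\item Write $g=\psi\cosh(c\,\cdot)$ with $\psi=g\,\textup{sech}(c\,\cdot)\in H^2$ and $\psi(0)=0$. Then $\textup{Tr}[g(A_n)]=\frac12\int\widehat\psi(\xi)\,\textup{Tr}[e^{(i\xi+c)A_n}+e^{(i\xi-c)A_n}]\,\d\xi$.
\item Poincar\'e's inequality, Duhamel's formula and Cauchy--Schwarz bound $\E|D_x\textup{Tr}[\psi(A_n)e^{\pm cA_n}]|^2$ by $(\int|\widehat\psi(\xi)|(|\xi|+|c|)\,\d\xi)^2$ times expectations of $\langle e^{sA^{(x)}_n}\mathbf e_1,\mathbf e_1\rangle$ and $\langle e^{sA^{(x)}_n}\widetilde{\mathbf a},\widetilde{\mathbf a}\rangle\langle\widetilde{\mathbf a},\widetilde{\mathbf a}\rangle$. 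Here $|s|\le 4|c|$ and $\widetilde{\mathbf a}$ is the neighbourhood vector of $x$.
\item Expanding these exponentials produces counts of paths through $x$. Lemma~\ref{key lemma} gives $\E L^{(\ell,x)}_m\le C^m m^m/(\log(m/r^d+1))^m$, coming from Poisson moments of box counts. This makes $\sum_m|s|^m\E L^{(\ell,x)}_m/m!$ converge for every $c$, uniformly in $n$ and irrespective of percolation.
\end{enumerate}
The price is the requirement $\int|\widehat\psi|(|\xi|+|c|)\,\d\xi<\infty$, which is where $\psi''$ and $c\neq0$ enter.

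Your fallback to a Gaussian-localized trace ``as in \cite{shch1}'' points in this direction but supplies none of these estimates. As written, the uniform variance bound is not established, and hence neither is the theorem for supercritical $r$.
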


Our central limit theorem is established for test functions belonging to the class \eqref{condition},
defined through the exponentially decaying weight \(\textup{sech}(cx)\).
This choice of weight allows the test functions to exhibit at most subexponential growth at both \(\pm\infty\),
while still ensuring sufficient regularity and integrability to guarantee variance control.
Hence, the admissible class of test functions is remarkably broad---far beyond the space of bounded
or compactly supported functions.

\subsection{Polynomials: Quantitative convergence rates}
\label{sss:pol}

In the polynomial case,  
we obtain explicit bounds on the Wasserstein distance between the (rescaled) spectral measure and the
Gaussian distribution. Recall that the \(L^1\)-Wasserstein distance between (real-valued) integrable random variables
\(X\) and \(Y\) is given by
\[
d_W(X, Y):= \sup_{g \in \mc F}\big|\E[g(X)] - \E[g(Y)]\big|,
\]
where \(\mc F\) denotes the set of all \(1\)-Lipschitz functions from \(\R\) to \(\R\).

%
%
\begin{maintheorem}[CLT for linear eigenvalue statistics; polynomials]
\label{thm:polt1} Fix \(r>0\), and let \(A_n\) denote the adjacency matrix of \(\mathrm{RGG}(r)\) on the vertex set \(\PP_n\).
Then for any polynomial  \(f\), the limit
\[
\sigma_f^2:=\lim_{n\rightarrow \infty }\frac{\textup{Var} (\textup{Tr} [f(A_n)])}{n}
\]
exists. Moreover, if \(\s_f^2 > 0\), then
\[
d_W\Big( \frac{\textup{Tr} [f(A_n)] - \E \textup{Tr} [f(A_n)] }{\sqrt n} , \mc{N}(0, \s_f^2) \Big) \le \frac{C}{\sqrt n },
\]
where \(C \) depends   on  
$d,r$ and \(f\). Finally, \(\s_f^2 > 0\) if all coefficients of \(f\) are non-negative, unless $f\equiv 0$.
 
\end{maintheorem}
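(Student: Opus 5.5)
By linearity it suffices to treat $f(x)=\sum_{k=1}^m a_k x^k$; the constant term contributes $a_0 N$, whose centred version is itself Poisson and is handled the same way. The starting point is the trace representation
\[
\textup{Tr}[A_n^k] = \sum_{(X_1,\dots,X_k)\in \PP_n^k} \prod_{i=1}^k \one\{|X_i-X_{i+1}|\le r\}, \qquad X_{k+1}:=X_1,
\]
which counts closed walks of length $k$. After grouping the walks by the pattern of coincidences among $X_1,\dots,X_k$, this becomes a finite sum of $U$-statistics of the Poisson process. Each kernel is bounded, translation invariant, and vanishes unless all distinct points lie within distance $kr/2$ of one another. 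Hence $F_n:=\textup{Tr}[f(A_n)]$ is a sum of local, stabilizing scores $\sum_{X\in\PP_n}\xi(X,\PP_n)$. The score $\xi(X,\PP_n)$ is the weighted number of closed walks rooted at $X$, and it depends only on $\PP_n\cap B(X,mr)$.

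\textbf{Variance asymptotics.} The first step is to show $\textup{Var}(F_n)/n\to\s_f^2$. I would use the Mecke formula to write
\[
\textup{Var}(F_n)=\E\sum_X\xi(X)^2+\iint\big(\E[\xi(x)\xi(y)]-\E\xi(x)\E\xi(y)\big)\,\d x\,\d y,
\]
with the second expectation taken under the Palm versions at $x,y$. The covariance term vanishes for $|x-y|>2mr$, and away from the boundary of $W_n$ every quantity is translation invariant. Therefore $\textup{Var}(F_n)=n\s_f^2+O(n^{(d-1)/d})$, where $\s_f^2$ is given by the stationary Palm integral over $\R^d$. Moments of all orders are finite because the scores are dominated by polynomials in $|\PP_n\cap B(X,mr)|$, which has Poisson tails.

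\textbf{Quantitative CLT.} For the Wasserstein rate I would invoke the second-order Poincaré inequality of Last--Peccati--Schulte, in the form for functionals of Poisson processes from \cite{mehler}. This requires uniform bounds on the moments $\E|D_xF_n|^{4+\e}$ and $\E|D^2_{x,y}F_n|^{4+\e}$, together with the observation that $D^2_{x,y}F_n=0$ whenever $|x-y|>2mr$. Both properties follow from the locality described above. The add-one cost $D_xF_n$ is a polynomial of degree at most $m$ in counts of points in $B(x,mr)$, so all its moments are bounded uniformly in $x$ and $n$. The three terms in the bound are then integrals over $W_n^3$ restricted to sets of volume $O(n)$. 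Divided by $\textup{Var}(F_n)^{3/2}\asymp n^{3/2}$, each is $O(n^{-1/2})$. Finally, the discrepancy between $\mc N(0,\textup{Var}(F_n)/n)$ and $\mc N(0,\s_f^2)$ is $O(|\textup{Var}(F_n)/n-\s_f^2|)=O(n^{-1/d})$, which is not $O(n^{-1/2})$ when $d\ge 3$. So I would either verify that the boundary correction actually gives $O(n^{-1/2})$, or else state the theorem with the normalisation $\sqrt{\textup{Var}F_n}$. This rate issue is a point to check carefully.

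\textbf{Positivity.} This is the main obstacle. I would use the lower bound $\textup{Var}(F_n)\ge \E\big[\textup{Var}(F_n\mid \mc G)\big]$ for a well-chosen $\s$-field $\mc G$. Concretely, partition $W_n$ into cubes of side $L\gg mr$, separated by corridors. Condition on the configuration outside a small ball $B_j$ at the centre of each cube and on the corridors, and keep only the configurations on the balls $B_j$ random. Then the contributions of different cubes are conditionally independent, so it suffices to show that one cube has conditional variance bounded below with positive probability. For this, compare two events of positive probability: $B_j$ empty versus $B_j$ containing exactly one point, with the neighbourhood $B(B_j,mr)\setminus B_j$ containing a fixed non-empty configuration. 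Because all coefficients $a_k\ge0$, adding a vertex adds closed walks and removes none, so $F$ strictly increases by at least $\sum_k a_k\cdot\#\{\text{closed walks through the new vertex}\}\ge a_{k_0}>0$. Here $k_0$ is any index with $a_{k_0}>0$; for even $k_0$ a back-and-forth walk to a neighbour exists, and for odd $k_0$ a triangle in the fixed configuration supplies one. Consequently the conditional variance per cube is $\ge c>0$, there are order $n$ cubes, and $\s_f^2\ge c'>0$.
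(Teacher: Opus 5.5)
Your proposal is correct in substance. Its quantitative part is the same as the paper's: both feed uniform fourth-moment bounds on $D_xF$ and $D^2_{x,y}F$, together with localisation of $D^2_{x,y}F$ to $|x-y|=O(mr)$, into the second-order Poincaré inequality of \cite{mehler}. Bounding $|D_xF|$ directly by a polynomial in $\PP_n(B(x,mr))$ is a quicker route to Lemma~\ref{lem:diffmom2} than the paper's score and spanning-tree computation, and it is legitimate because RGG edges are determined pairwise.

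The other two parts take a different route. For the variance limit the paper computes nothing. It checks add-one-cost stabilization with the deterministic radius $\Rex=rm$ and a uniform moment bound, then invokes the general theorem of \cite{trinh2}. That theorem also gives positivity once the add-one cost is shown to be nonzero with positive probability, via monotonicity of the RGG and nonnegative coefficients. You instead use the finite range directly in the Mecke/Palm variance formula, and you prove positivity by blocking and $\textup{Var}(F)\ge\E[\textup{Var}(F\mid\mathcal G)]$. The paper's route is shorter, and it is the template reused for the $k$-nearest neighbour and relative neighbourhood graphs in Section~\ref{sec:ex}, where the range of dependence is random. Yours is self-contained, gives $\sigma_f^2$ as an explicit Palm integral, and gives the rate $\textup{Var}(F_n)/n-\sigma_f^2=O(n^{-1/d})$.

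That rate exposes a real defect in the statement, and you should resolve your hesitation in favour of the second option: the boundary correction is genuinely of order $n^{(d-1)/d}$. For $f(x)=x^2$, the chaos expansion of $\textup{Tr}[A_n^2]=\sum_{(X,Y)\in\PP_n^{(2)}}\one\{|X-Y|\le r\}$ gives $\textup{Var}(\textup{Tr}[A_n^2])=4\int_{W_n}|B(x,r)\cap W_n|^2\,\mathrm{d}x+2\int_{W_n}|B(x,r)\cap W_n|\,\mathrm{d}x$. This falls short of $n\sigma_f^2$ by a constant multiple of $n^{(d-1)/d}$. Testing with the $1$-Lipschitz function $|\cdot|$ then bounds the Wasserstein distance to $\mathcal N(0,\sigma_f^2)$ below by $c\,n^{-1/d}-Cn^{-1/2}$, which is not $O(n^{-1/2})$ when $d\ge 3$. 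What the paper's proof, and yours, actually establishes is $d_W(\widehat F,\mathcal N(0,1))\le Cn^{-1/2}$ for the statistic standardised by $\sqrt{\textup{Var}F}$. With target $\mathcal N(0,\sigma_f^2)$ and normalisation $\sqrt n$, the correct rate is $n^{-\min\{1/2,1/d\}}$.

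Two smaller points. First, your trace formula needs $X_i\neq X_{i+1}$, because $A_n$ has zero diagonal; as written it computes $\textup{Tr}[(A_n+I)^k]$. Second, your odd case needs $k_0\ge 3$. For $f(x)=a_1x$ with $a_1>0$ one has $\textup{Tr}[f(A_n)]\equiv 0$, so the theorem's final positivity clause itself requires some $a_k>0$ with $k\neq 1$. With that exclusion, your triangle argument for odd $k_0\ge3$ is more careful than the paper's. The paper asserts that a single neighbour of the origin already makes $g_f(0,\cdot)$ positive, which is false for, e.g., $f(x)=x^3$.
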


Note that our quantitative normal approximation requires the non-degeneracy assumption \(\sigma_f^2>0\).
Nevertheless, the (properly scaled) fluctuations still vanish in probability even in the degenerate case \(\sigma_f^2=0\).
Indeed, by Markov's inequality, for any \(\e>0\),
\[
\limsup_{n\rightarrow \infty}\P\Big(|\textup{Tr} [f(A_n)] - \E[\textup{Tr} [f(A_n)]]| > \e\sqrt{|W_n|}\Big)
\le \frac1{\e^2}\limsup_{n\rightarrow \infty} \frac{\textup{Var}(\textup{Tr} [f(A_n)])}{|W_n|} = 0.
\]

\subsection{Related works: Erd\H{o}s–R\'enyi graphs and random regular graphs} We briefly review the known results regarding the eigenvalues and the corresponding CLT result for other classes of combinatorial models, including Erd\H{o}s–R\'enyi graphs and random regular graphs.
\subsubsection{Erd\H{o}s–R\'enyi graph}
The
Erd\H{o}s-R\'{e}nyi graph $\mc{G}_{n,p}$ is a fundamental model of random graphs, where every edge is included independently of the other edges. Its spectral statistics has been extensively studied so far \cite{MR4515695, eky2, eky1, MR3800840, MR4288336, MR4021251, MR4089498}. When the average degree tends to infinity (i.e. {$p\gg n^{-1}$}), the empirical spectral distribution (ESD) converges weakly to the Wigner's celebrated semicircle law. 

However, this behavior dramatically changes in the constant-average degree regime (i.e.~$p=\frac{d} n$ with $d$ 
 \emph{fixed}), corresponding to the thermodynamic regime considered in our paper. In this regime, the limit of ESD is supported on the entire real line and has a dense set of atoms \cite{chayes1986density}, contrasting sharply with the semicircle law. The qualitative nature of the limiting distribution varies significantly with the average degree $d$. In particular, \cite{bordenave2017mean} established that the limiting law has a continuous part if and only if 
$d > 1$.

Regarding the CLT in the constant average degree regime,  the ESD is known to satisfy a CLT \cite{cltsparse}: Denoting by $A_n$ the adjacency matrix of $\mc{G}_{n,d/n}$, for any  test function $f \in W^{\infty,1}_0(\R)$,
\begin{align*}
  \frac{\textup{Tr} [f(A_n)] - \E \textup{Tr} [f(A_n)] }{\sqrt n}  \rightarrow \mc N(0,\sigma_f^2),
\end{align*}
where $W^{\infty,1}_0(\R)$ denotes the collection of functions $f:\R \rightarrow \R$ such that there is $g\in L^1(\R)$ satisfying
\begin{align*}
	\int_{-\infty}^\infty g(t)\d t=0  \ \text{ and } \  \int_{-\infty}^x g(t)\d t = f(x),\quad   \forall x\in \R.
\end{align*}
In this work, the authors first derived a CLT for higher moments using combinatorial techniques and then extended it to general test functions via a martingale approach.

\subsubsection{Random $d$-regular graph}
The random $d$-regular graph is a uniform probability measure on the collection of graphs (with a fixed number of vertices) with every vertex degree equal to $d$. Analyzing its spectral properties is significantly more challenging than for ERGs due to the dependencies between edges.
It is known that  for any {fixed} $d\ge 3$, the ESD converges to the Kesten-McKay distribution \cite{mckay1981expected}.

However, CLT for the ESD of random 
$d$-regular graphs remains an open problem.
 Notably, there exists a related model, the $d$-uniformly random permutation matrix $A_n$ on $n$ labels, for which a CLT has been established \cite{cltrrg}: For any fixed $d \ge 2,$ there exists $c\in \R$ such that for any reasonable analytic function $f$,
\begin{align}
 \textup{Tr} [f(A_n)] - cn \overset{\textup{d}}{\rightarrow} Y,
\end{align}
where the limiting distribution $Y$ is a \emph{non-Gaussian} infinitely divisible distribution (see \cite[Theorem 35]{cltrrg} for the detailed statement).

\subsection{Organization}

The manuscript is organized as follows. First, in Section \ref{sec3} we provide some preliminaries on Poisson point processes that will be used throughout the manuscript. Then, Sections \ref{section4} and \ref{sec:fun} contain the proofs of our main results. Finally, in Section~\ref{sec:ex}, we further deduce polynomial-test-function CLTs for   \(k\)-nearest neighbor graphs and relative neighborhood graphs.

Throughout  the proofs in this paper,
we will use the same constant, say $C$, whose value might change from line to line.

\subsection{Acknowledgement}
C.H.~is supported by a research grant (VIL69126) from VILLUM FONDEN. K.N. is supported by the National Research Foundation of Korea (RS-2019-NR040050). M.O. is supported by the NWO Gravitation project NETWORKS under grant agreement no. 024.002.003 and a grant
(W253098-1-035) from the Drs. J.R.D. Kuikenga Fonds voor Mathematici.

%
%
\section{Preliminaries on the Poisson point process} \label{sec3}

In this preliminary section, we present fundamental tools for analyzing the Poisson point process.
Let $(X,\mathcal X)$ be a measurable space and $(\Omega, \mc F, \mathbb P)$ be the underlying probability space. Let $\PP$ be a Poisson process in $X$ with intensity measure $\lambda$ which is $\s$-finite. We may regard $\PP $ as a random element
in the space $\textbf{N}$ of integer-valued $\sigma$-finite measures on $X$ equipped with the smallest $\sigma$-field $\mc N$, making the mappings $\mu \mapsto \mu(B)$ measurable for any $ B\in \mathcal X$.

For $p \ge 1,$
we denote $L^p_{\PP}$ to be the space of all random variables $F \in L^p (\mathbb P)$ such that $F = f (\PP)$ $\mathbb P$-a.s.
for some measurable function $f : \textbf{N} \rightarrow  \mathbb R$. Such a function $f$ is uniquely determined almost surely, and is called a representative
of $F$. Also, for $k \in \bN$, $\PP^{(k)}$ denotes the collection of $k$-tuples $(\textup{\textsf{x}}_1,\cdots,\textup{\textsf{x}}_k) $ such that $\textup{\textsf{x}}_1,\cdots,\textup{\textsf{x}}_k \in \PP$ are mutually distinct.

\subsection{Mecke formula}
The Mecke formula provides an elegant way to compute expectations involving Poisson point processes and is widely used in stochastic geometry.
Throughout this section, we assume that $\PP$ is a Poisson point process on $X$ with intensity measure $\lambda$ which is $\s$-finite.

\begin{lemma}[Mecke formula, Theorem 4.1 in \cite{poisBook}] \label{mecke}
For any measurable function 
 $f: X \times \textup{\textbf{N}} \to [0, \infty),$ 
\[
\mathbb{E} \Big[ \sum_{\textup{\textsf{x}} \in \PP} f(\textup{\textsf{x}}, \PP) \Big] = \int_X \mathbb{E} \left[ f(x, \PP + \delta_{ x}) \right]   \d \lambda(x).
\]
\end{lemma}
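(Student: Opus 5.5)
The Mecke formula is a standard fact, and I would prove it by a monotone class argument that starts from functions of product form. Write the left-hand side as $\int_X\int_{\textbf{N}} f(x,\mu)\, \mathcal C(\d(x,\mu))$, where $\mathcal C$ is the \emph{Campbell measure}
\[
\mathcal C(B\times D):=\mathbb E\Big[\sum_{\textsf{x}\in\PP}\one\{\textsf{x}\in B\}\one\{\PP\in D\}\Big].
\]
Write the right-hand side as the integral of $f$ against the measure
\[
\mathcal C'(B\times D):=\int_B \P(\PP+\delta_x\in D)\,\d\lambda(x).
\]
Both are measures on $X\times\textbf{N}$. It therefore suffices to prove $\mathcal C=\mathcal C'$. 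The identity for indicators then extends to simple functions by linearity, and to general nonnegative measurable $f$ by monotone convergence.

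First I reduce to a finite intensity measure. Because $\lambda$ is $\sigma$-finite, fix $B$ with $\lambda(B)<\infty$; the general case follows by exhausting $X$ with countably many such sets and using countable additivity. On such a $B$, I work with the class of sets $D$ of the form $\{\mu:\mu(B_1)=k_1,\dots,\mu(B_m)=k_m\}$, where $B_1,\dots,B_m$ are disjoint sets of finite $\lambda$-measure. This class is a $\pi$-system generating $\mathcal N$. Both $\mathcal C(B\times\cdot)$ and $\mathcal C'(B\times\cdot)$ are finite measures with common total mass $\lambda(B)$, since $\mathbb E\,\PP(B)=\lambda(B)$. Dynkin's $\pi$–$\lambda$ theorem therefore reduces the claim to checking the equality on these cylinder sets. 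After refining the partition, I may also assume that $B$ is one of the $B_i$ or disjoint from all of them.

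The core of the proof is then an explicit Poisson computation, for which I use the independence of the counts $\PP(B_i)$ and their Poisson distributions. Suppose $B=B_1$. The left-hand side is
\[
\mathbb E\big[\PP(B_1)\one\{\PP(B_i)=k_i\ \forall i\}\big]=k_1\prod_i e^{-\lambda(B_i)}\frac{\lambda(B_i)^{k_i}}{k_i!}.
\]
For $x\in B_1$, adding the point $\delta_x$ raises the count in $B_1$ by one, so the right-hand side is
\[
\lambda(B_1)\,e^{-\lambda(B_1)}\frac{\lambda(B_1)^{k_1-1}}{(k_1-1)!}\prod_{i\ge2}e^{-\lambda(B_i)}\frac{\lambda(B_i)^{k_i}}{k_i!}.
\]
The two expressions agree, using the identity $k\,\lambda^k/k!=\lambda\cdot\lambda^{k-1}/(k-1)!$; when $k_1=0$ both sides vanish. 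If $B$ is disjoint from all the $B_i$, both sides factor as $\lambda(B)\,\P(\PP(B_i)=k_i\ \forall i)$, where independence of the counts is used on the left.

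The only delicate step is the measure-theoretic bookkeeping: checking that the cylinder sets generate $\mathcal N$, and that the finiteness needed for Dynkin's theorem holds after the $\sigma$-finite localization. The Poisson computation itself is routine.
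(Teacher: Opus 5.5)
The paper gives no proof of this lemma; it cites Theorem~4.1 of \cite{poisBook}. Your plan is the standard proof of that result: compare the Campbell measure $\mathcal C$ with $\mathcal C'$, check them on product-type sets using complete independence and $k\lambda^k/k!=\lambda\cdot\lambda^{k-1}/(k-1)!$, then extend by a monotone class argument. The Poisson computation is correct, and so is the case split ($B=B_1$ versus $B$ disjoint from all $B_i$).

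One step fails as written: the sets $\{\mu:\mu(B_1)=k_1,\dots,\mu(B_m)=k_m\}$ with \emph{pairwise disjoint} $B_i$ do not form a $\pi$-system.

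\emph{Counterexample.} Suppose $B_1$ and $B_1'$ overlap, and pick $a\in B_1\setminus B_1'$, $b\in B_1\cap B_1'$, $c\in B_1'\setminus B_1$. The set $\{\mu(B_1)=1\}\cap\{\mu(B_1')=1\}$ contains $\delta_a+\delta_c$ and $\delta_b$. Any cylinder over disjoint sets containing both of these would also contain $\delta_a$, $\delta_c$ or the zero measure, and none of these lies in the intersection. So the intersection is only a finite disjoint union of your cylinders, and Dynkin's theorem cannot be applied to this class directly.

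\emph{Repair.} Use instead the sets $\{\mu(C_1)=k_1,\dots,\mu(C_m)=k_m\}$ with arbitrary, possibly overlapping, $C_j$ of finite $\lambda$-measure.
\begin{enumerate}
\item This class is closed under intersection, and it still generates $\mathcal N$ by $\sigma$-finiteness.
\item Each member is a \emph{finite} disjoint union of your disjoint cylinders over the atoms generated by $C_1,\dots,C_m$. Finiteness holds because every atom lies in some $C_j$, so its count is at most $k_j$.
\item Finite additivity of $\mathcal C(B\times\cdot)$ and $\mathcal C'(B\times\cdot)$ then transfers your computation to the whole $\pi$-system. The same additivity justifies your ``refine the partition'' step.
\end{enumerate}

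Two further points are left implicit but are routine. First, $\mathcal C'$ is well defined because $(x,\mu)\mapsto\mu+\delta_x$ is measurable, so $x\mapsto\P(\PP+\delta_x\in D)$ is measurable. Second, equality on rectangles $B\times D$ gives $\mathcal C=\mathcal C'$ on $\mathcal X\otimes\mathcal N$ by one more uniqueness argument. Rectangles form a $\pi$-system, and $X\times\mathbf N$ is exhausted by rectangles $X_j\times\mathbf N$ of finite mass $\lambda(X_j)$.
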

The following is a  multi-dimensional version of the Mecke formula.
\begin{lemma}[Multi-dimensional Mecke formula, Theorem 4.4 in \cite{poisBook}] \label{multi}
 Let $k \in \bN$. Then, for
any  measurable function 
 $f: X ^k \times \textup{\textbf{N}} \to [0, \infty),$ 
\[
\mathbb{E} \Big[ \sum_{(\textup{\textsf{x}}_1,\cdots,\textup{\textsf{x}}_k) \in \PP^{(k)}} f(\textup{\textsf{x}}_1,\cdots,\textup{\textsf{x}}_k, \PP) \Big] = \int_{X^k} \mathbb{E} \left[ f(x_1,\cdots,x_k, \PP + \delta_{x_1} + \cdots +\delta_{x_k} ) \right] \d \lambda^{\otimes k}(x_1,\cdots,x_k).
\]
\end{lemma}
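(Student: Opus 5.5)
The plan is to prove the statement by induction on \(k\), using the univariate Mecke formula (Lemma~\ref{mecke}) both as the base case and as the engine of the induction step. For \(k=1\) the statement is exactly Lemma~\ref{mecke}. Because \(f\ge 0\), every interchange of sums, integrals and expectations below is justified by Tonelli's theorem, and no integrability assumptions are needed.

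For the induction step, assume the claim holds for \(k\) and let \(f\colon X^{k+1}\times\textbf{N}\to[0,\infty)\) be measurable. The key observation is that a \((k+1)\)-tuple of mutually distinct points of \(\PP\) can be decomposed by first choosing its last entry \(\textsf{x}_{k+1}\in\PP\), and then a \(k\)-tuple of mutually distinct points of the remaining configuration \(\PP-\delta_{\textsf{x}_{k+1}}\). Accordingly, for \(x\in X\) and \(\mu\in\textbf{N}\) I define
\[
g(x,\mu):=\sum_{(\textsf{x}_1,\dots,\textsf{x}_k)\in(\mu-\delta_x)^{(k)}} f(\textsf{x}_1,\dots,\textsf{x}_k,x,\mu),
\]
set to \(0\) when \(x\) is not an atom of \(\mu\). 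With this definition,
\[
\sum_{(\textsf{x}_1,\dots,\textsf{x}_{k+1})\in\PP^{(k+1)}} f(\textsf{x}_1,\dots,\textsf{x}_{k+1},\PP)=\sum_{\textsf{x}\in\PP} g(\textsf{x},\PP).
\]
Applying Lemma~\ref{mecke} to \(g\) gives
\[
\E\Big[\sum_{\textsf{x}\in\PP}g(\textsf{x},\PP)\Big]=\int_X\E\big[g(x,\PP+\delta_x)\big]\,\d\lambda(x).
\]
Since \((\PP+\delta_x)-\delta_x=\PP\) exactly as measures, we obtain
\[
g(x,\PP+\delta_x)=\sum_{(\textsf{x}_1,\dots,\textsf{x}_k)\in\PP^{(k)}} f_x(\textsf{x}_1,\dots,\textsf{x}_k,\PP),
\qquad f_x(x_1,\dots,x_k,\mu):=f(x_1,\dots,x_k,x,\mu+\delta_x).
\]
The function \(f_x\) is again nonnegative and measurable, so the induction hypothesis applies to it and yields
\[
\E\big[g(x,\PP+\delta_x)\big]=\int_{X^k}\E\big[f(x_1,\dots,x_k,x,\PP+\delta_{x_1}+\dots+\delta_{x_k}+\delta_x)\big]\,\d\lambda^{\otimes k}(x_1,\dots,x_k).
\]
Integrating this identity in \(x\) against \(\lambda\) and using Tonelli to merge the two integrals into a single integral over \(X^{k+1}\) against \(\lambda^{\otimes(k+1)}\) gives the formula for \(k+1\).

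The main obstacle is not the algebra but making the objects rigorous. One must make sense of \(\PP^{(k)}\) when \(\mu\) may have atoms of multiplicity larger than one: a point of multiplicity \(m\) should be usable up to \(m\) times, so \(\mu-\delta_x\) removes one copy rather than the whole atom. One must also verify that \(g\) is jointly measurable on \(X\times\textbf{N}\). To handle both issues I would encode the sums through factorial measures \(\mu^{(k)}\), defined recursively by
\[
\mu^{(k+1)}(C)=\int\!\!\int \one\{(x_1,\dots,x_{k+1})\in C\}\,\d\Big(\mu-\sum_{j\le k}\delta_{x_j}\Big)(x_{k+1})\,\d\mu^{(k)}(x_1,\dots,x_k).
\]
The decomposition used in the induction step is then precisely this recursion read with the last coordinate first. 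Measurability of \(\mu\mapsto\int h\,\d\mu^{(k)}\) for measurable \(h\ge0\) follows by a monotone class argument, starting from indicators of product sets.
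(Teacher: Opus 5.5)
Your induction is correct. The paper gives no proof of its own and only cites \cite[Theorem 4.4]{poisBook}, where the result is proved by the same induction on $k$ from the univariate Mecke formula, with the two ingredients applied in the opposite order. There the factorial-measure recursion you wrote down is used as it stands. The induction hypothesis is applied first, to the outer $\PP^{(k)}$-integral of $h(x_1,\dots,x_k,\mu):=\int f(x_1,\dots,x_{k+1},\mu)\,\bigl(\mu-\sum_{j\le k}\delta_{x_j}\bigr)(\mathrm{d}x_{k+1})$. This replaces $\mu$ by $\PP+\sum_{j\le k}\delta_{x_j}$, so the subtracted Dirac masses cancel and leave $\int f(x_1,\dots,x_{k+1},\PP+\sum_{j\le k}\delta_{x_j})\,\PP(\mathrm{d}x_{k+1})$; Lemma~\ref{mecke} then handles the last coordinate. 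You instead peel off the last point on the outside with Lemma~\ref{mecke} and apply the hypothesis inside to $f_x$. The advantage is that the function fed to the hypothesis is obviously nonnegative and measurable, with no signed measure in sight. The price is that your decomposition $\mu^{(k+1)}=\int(\mu-\delta_x)^{(k)}\otimes\delta_x\,\mu(\mathrm{d}x)$ is \emph{not} ``precisely this recursion read with the last coordinate first''. The recursion integrates $x_{k+1}$ innermost against $\mu-\sum_{j\le k}\delta_{x_j}$, with $\mu^{(k)}$ of the same $\mu$ outside, whereas your identity involves factorial measures of the different configurations $\mu-\delta_x$. It is a separate (true) identity, immediate from the representation $\mu^{(k)}=\sum^{\neq}_{i_1,\dots,i_k}\delta_{(y_{i_1},\dots,y_{i_k})}$ for $\mu=\sum_i\delta_{y_i}$. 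Your route also needs joint measurability of $(x,\mu)\mapsto\int h\,\mathrm{d}(\mu-\delta_x)^{(k)}$ and of the set $\{(x,\mu):\mu(\{x\})\ge 1\}$ used in your atom convention. That is slightly more work than the linear-in-$\mu$ integrand of the standard route, though routine for $X=\R^d$, which is where the paper applies the lemma.
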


\subsection{Difference operator}
In the context of point processes, the difference operator is a tool used to study changes in functionals of the process when a point is added.
Let $f : \textbf{N} \rightarrow  \mathbb R$ be a measurable function. Then, for any $x_1,\cdots,x_n \in X$ and $\mathcal Q \in \textbf{N}$, the \emph{$n$-th difference operator} is defined by
\begin{align*}
  D_{x_1,\cdots,x_n}^n f(\mathcal Q) := \sum_{J \su \{1,\cdots,n\}}(-1)^{n-|J|} f \Big( \mathcal Q + \sum_{j\in J}\delta_{x_j}\Big).
 \end{align*}
 Note that $ D_{x_1,\cdots,x_n}^n$ is symmetric in $x_1,\cdots,x_n$, and the map $(x_1,\cdots,x_n, \mathcal Q) \mapsto D_{x_1,\cdots,x_n}^n (\mathcal Q)$ is measurable on $X^n \times \textbf{N}$.
 In particular, we call
 \begin{align}\label{fdo}
     D_x f(\mathcal Q) := f( \mathcal Q + \delta_{x}) - f(\mathcal Q)
 \end{align}
the \emph{first-order difference operator}, and 
$$D_{x, y}^2 f(\mathcal Q):= f(\mathcal Q + \delta_{ x} + \delta_y ) -f (\mathcal Q + \delta_{ x} ) -f(\mathcal Q + \delta_y ) + f (\mathcal Q)$$
the \emph{second-order difference operator}.

Now, let us consider the Poisson point process $\PP$. For $F \in L^2_{\PP}$ with representative $f$, we define $  D_{x_1,\cdots,x_n}^n F :=   D_{x_1,\cdots,x_n}^n  f ( \PP)$. Note that this definition does not depend on the choice of the representative $f$ almost surely, by the multivariate Mecke formula, Lemma \ref{multi}.

\subsection{Poincar\'e inequality}
The 
Poincar\'e inequality provides bounds on the variance of a functional of a Poisson point process in terms of its gradient. It is particularly useful in the analysis of fluctuations and concentration inequalities.  
\begin{lemma} 
[Poincar\'e inequality] \label{poincare}
For any square-integrable random variables $F \in L^2_{\PP}$,
  \begin{align*}
      \textup{Var}(F) \le \int_X \mathbb E [(D_x F)^2]\d\lambda (x).
  \end{align*}
\end{lemma}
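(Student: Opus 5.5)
The Poincaré inequality (Lemma~\ref{poincare}) is a standard fact, and I will obtain it from the Fock-space (Wiener--Itô chaos) representation of Poisson functionals. The plan proceeds in three steps.

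\textbf{Step 1: chaos expansion.} For $F\in L^2_{\PP}$, Last and Penrose give
\[
F=\E F+\sum_{n\ge1}\frac1{n!}I_n(f_n),\qquad f_n(x_1,\dots,x_n):=\E\big[D^n_{x_1,\dots,x_n}F\big].
\]
Here $I_n$ denotes the $n$-th multiple Wiener--Itô integral with respect to the compensated measure $\PP-\lambda$. These integrals satisfy the isometry $\E[I_n(g)I_m(h)]=\one\{n=m\}\,n!\,\langle g,h\rangle_{L^2(\lambda^n)}$ for symmetric $g,h$. Consequently
\[
\textup{Var}(F)=\sum_{n\ge1}\frac1{n!}\|f_n\|^2_{L^2(\lambda^n)}.
\]

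\textbf{Step 2: expansion of the gradient.} Suppose first that $\int\E[(D_xF)^2]\,\d\lambda(x)<\infty$; otherwise the inequality is trivial. For $\lambda$-a.e.\ $x$, $D_xF\in L^2_{\PP}$, and by definition $\E[D^{n}_{x_1,\dots,x_n}D_xF]=f_{n+1}(x,x_1,\dots,x_n)$. Applying the expansion of Step 1 to $D_xF$ and using the isometry gives
\[
\E[(D_xF)^2]=\sum_{n\ge0}\frac1{n!}\|f_{n+1}(x,\cdot)\|^2_{L^2(\lambda^n)}.
\]
Integrating in $x$ with Tonelli's theorem yields
\[
\int\E[(D_xF)^2]\,\d\lambda(x)=\sum_{n\ge1}\frac{1}{(n-1)!}\|f_n\|^2_{L^2(\lambda^n)}=\sum_{n\ge1}\frac{n}{n!}\|f_n\|^2.
\]

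\textbf{Step 3: comparison.} Since $n\ge1$, each term in Step 2 dominates the corresponding term in the variance formula of Step 1, which proves the inequality.

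The main obstacle is justifying that $D_xF$ is square integrable and that its kernels are the shifted $f_{n+1}$. This relies on the Last--Penrose theory, which is the step I would cite; it shows that the inequality holds whenever the right-hand side is finite, and the inequality is trivial when the right-hand side is infinite.

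Alternatively, one could avoid chaos expansions by using the covariance identity $\textup{Var}(F)=\E\int\int_0^1 D_xF\cdot P_tD_xF\,\d t\,\d\lambda(x)$, where $P_t$ is the Ornstein--Uhlenbeck semigroup built from Mehler's formula. One then applies Cauchy--Schwarz together with the contraction $\E[(P_tG)^2]\le\E[G^2]$.
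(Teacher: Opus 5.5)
Your argument is correct and is the standard one. The paper gives no proof of Lemma~\ref{poincare} and quotes it as a known fact; it is Theorem~18.7 in Last and Penrose's \emph{Lectures on the Poisson Process}, the source the paper uses for the Mecke formulas. There it is proved exactly as you do: reduce to a finite right-hand side, then apply the Fock-space identity $\E[G^2]=(\E G)^2+\sum_{n\ge1}\frac1{n!}\|\E[D^nG]\|^2_{L^2(\lambda^n)}$ to $G=F$ and to $G=D_xF$, whose kernels are $f_{n+1}(x,\cdot)$. Only this isometry is actually needed; the multiple Wiener--It\^o integrals themselves play no role. Your alternative, via the Mehler-formula covariance identity and the $L^2$-contractivity of $P_t$, is equally valid.
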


\def\Fe{F_{\ms{Eucl}}}
\def\Le{L_{\ms{Eucl},n}}

%
%

\section{Proof of Theorem \ref{thm:polt1}--Polynomials} \label{section4}

Although Theorem~\ref{thm:fun}, i.e. CLT for general test functions, is the main result of the paper, its proof relies on first
establishing a CLT for polynomial test functions.
Accordingly, we begin by proving Theorem~\ref{thm:polt1}, which provides the polynomial CLT (in fact, with a quantitative
Wasserstein rate), and then use the polynomial CLT as an input to derive Theorem~\ref{thm:fun} via an approximation argument
for general test functions.

We express the trace $L_n(f)$ for polynomials $f$ as a function of the underlying Poisson point process $\PP_n$. 
For integers $1\le p\le p'$, define
\begin{align} \label{permute}
  \Sigma_{p',p} := \big\{\pi\co \{1, \dots, p'\} \to \{1, \dots, p\}\text{ surjective}\big\}.
\end{align} 
Then, one can write $L_n(x^m)$, defined in \eqref{basic} with $f(x):=x^m$, as follows:
\begin{align}
L_n(x^m) &= \sum_{p =2}^ m  \f1{p!}\sum_{(X_1,\dots,X_p) \in \PP_n^{(p)}}\sum_{\pi \in \Sigma_{m,p}}\prod_{j =1}^ mA(X_{\pi(j)}, X_{\pi(j + 1)};{\PP_n}) ,
\end{align}
where we use the convention that $\pi(p'+1)  = \pi(1)$. 
More generally, for a polynomial $f(x) = a_mx^m + \cdots + a_0$, {by linearity of $L_n(\cdot)$},  
we have the expression
\begin{align}\label{eq:g}
 L_n(f) =  \sum_{p = 2}^m\f1{p!}\sum_{(X_1,\dots,X_p) \in \PP_n^{(p)}}  \sum_{q = p}^ma_q   \sum_{\pi \in \Sigma_{q,p}}\prod_{j =1}^{q} A(X_{\pi(j)}, X_{\pi(j + 1)} ; \mc \PP_n).
\end{align}

{Now, let us define a score function.  Let $ {\textbf{N}} $ be the set of counting measures on $ \mathbb R^d $ that are simple  and locally finite.  For any $\mc Q \in \textbf N$ and $\textsf{x}_1\in \mc Q$, the \emph{score} function is defined to be   
\begin{align}\label{score}
  g_f(\textsf{x}_1, \mc Q) &:= \sum_{p = 2}^m  {\f1{(p-1)!}}  \sum_{\substack{(\textsf{x}_2,\dots,\textsf{x}_p) \in \mc (Q\setminus \{\textsf{x}_1\} )^{(p-1)}}} \sum_{q = p}^ma_q \sum_{\substack{\pi \in \Sigma_{q,p} \\ \pi(1)=1} }\prod_{j =1}^{q} A(\textsf{x}_{\pi(j)}, \textsf{x}_{\pi(j + 1)} ; \mc Q).
\end{align}
In other words, \(g_f(\mathsf{x}_1, \mathcal{Q})\) quantifies the contribution of those paths in \(\mathcal{Q}\) that originate at the point \(\mathsf{x}_1\).
}
{Then, by \eqref{eq:g}, $  L_n(f)$ can be expressed as a sum of score functions:
\begin{align} \label{sumofscore}
  L_n(f) = \sum_{X  \in \PP_n} g_f(X, \PP_{n}).
\end{align}
}

This score-function representation will play a crucial role in the proof of Theorem~\ref{thm:polt1}.

%
\subsection{Variance asymptotics}
\label{ss:var2}
In this section, we show the variance asymptotics, i.e., for any  polynomial $f$, there exists $\s_f^2 \ge0$ such that 
$$\lim_{n\to\ff}\frac{\textup{Var}(L_n(f)) }{n} = \s_f^2.$$
We aim to apply \cite[Theorem 1.1]{trinh2}.  From now on,  $B(y,r)$ denotes the closed ball of radius $r$ centered at $y$, and $D_f$ denotes the \emph{add-one cost}
\begin{align} \label{addone}
    D_f(\mc Q) := \textup{Tr} \big[f\big(A^{\mc Q \cup \{0\}}\big)\big] - \textup{Tr} \big[f\big(A^{\mc Q}\big)\big],\qquad \forall \mc Q \in \mathbf N.
\end{align} 

\bep[Proof of the variance asymptotics] 
To apply \cite[Theorem 1.1]{trinh2}, 
we verify the following two conditions:
\been
\item  There exists an almost surely finite  random variable $\Rex$ (depending on $f$),  such that 
for any  $\mc A \in \tN $ with $\mc A \su \R^d \sm B(0, \Rex)$,
\begin{align}
	\label{eq:stabex2222}
	D_f\big((\PP \cap B(0, \Rex))\cup \mc A\big)  = D_f\big(\PP \cap B(0, \Rex)\big).
\end{align}
\item  Moment condition: There exists $p > 2$ (depending on $f$)  such that  
\begin{align}
	\label{eq:stabexm2222}
	\sup_{0 \in W:\text{cube}} \E\big[|D_f(\PP\cap W)|^p\big] < \ff.
\end{align} 
\enen
We first claim that 
 the condition \eqref{eq:stabex2222} implies the weakly stabilizing condition in \cite[Theorem 1.1]{trinh2}.
Denoting by  $n_0$ the smallest (random) integer $n$ such that $B(0, \Rex) \su W_n$,   for any $n \ge n_0$,
\begin{align}\label{weak}
   D_f (\PP_n) \overset{\eqref{eq:stabex2222}}{=} D_f\big(\PP_n \cap B(0, \Rex)\big)  = D_f\big(\PP_{n_0} \cap B(0, \Rex)\big), 
\end{align}
thereby satisfying the weakly stabilizing condition in \cite[Theorem 1.1]{trinh2}. 
In addition, the functional $ \textup{Tr} \big[f\big(A^{\mc Q}\big)\big]$ for the random geometric graph is translation-invariant.
Therefore by \cite[Theorem 1.1]{trinh2}, it suffices to verify the conditions \eqref{eq:stabex2222} and \eqref{eq:stabexm2222}.

Assume that   $f$ is a degree-$m$ polynomial.
We first verify that   $\Rex = rm$ satisfies \eqref{eq:stabex2222}. Indeed in a random geometric graph, the presence of edges is monotone with respect to the underlying point set. 
Consequently, when the origin is added, the add-one cost \(D_f(\cdot)\) arises solely from paths that pass through the origin.
 Moreover, any \(m\)-path passing through \(0\) is contained in \(B(0, rm)\); hence changes to the point configuration outside \(B(0, rm)\) do not affect the add-one cost \(D_f(\mathcal{P} \cap B(0, rm))\).

Now, let us verify \eqref{eq:stabexm2222}.  By linearity, it suffices to consider the case $f(x) = x^m$. Note that the number of $m$-paths originating from $0$ and consisting of points in $\PP \cap W$ is bounded above by $\PP(B(0, rm))^m$, uniformly over all cubes $W$. This implies $|D_f(\PP \cap W) |\le \PP(B(0, rm))^m$. Since the Poisson distribution admits finite moments of all orders, $\E\big[\PP\big(B(0, rm)\big)^{3m}\big] < \ff$. Hence, the moment condition \eqref{eq:stabexm2222}   is satisfied with $p = 3$ (in fact, it holds for any $p > 2$; we simply take $p = 3$ for convenience). Therefore we conclude the proof.
\enp

%
%
{
\bep[Proof of the variance positivity] 
Now we show that the limiting variance is strictly positive when all coefficients of the polynomial $f$ are non-negative, unless $f\equiv 0$.
By \cite[Theorem 1.1]{trinh2}, it is enough to show that
$$
\P(D_f\big(\PP_{n_0} \cap B(0, \Rex)\big) \neq 0)>0,
$$
where $n_0$ is as in \eqref{weak}. Recalling the score function $g_f$ in \eqref{score}, by \cite[Lemma 5.2]{mal_stab},  
$$
D_f\big(\PP_{n_0} \cap B(0, \Rex)\big) = g_f(0, (\PP_{n_0}\cup \{0\} )  \cap B(0,\Rex)) + \sum_{X \in \PP_{n_0} \cap B(0,\Rex)} D_0g_f(X,\PP_{n_0}\cap B(0,\Rex))
$$
(recall that  $D_0$ denotes the difference operator, see \eqref{fdo}).
 Note that, since all coefficients of the polynomial $f$ are assumed to be non-negative, we have that a.s.,
$$D_0g_f(X,\PP_{n_0}\cap B(0,\Rex)) \ge 0,\quad  \forall X \in \PP_{n_0}.
$$
Moreover, $g_f(0,(\PP_{n_0}\cup \{0\} ) \cap B(0,\Rex)) >0$ if there exists a point  in $\PP_{n_0}$ within distance $r$ from  the origin. Thus
$$
\P(D_f\big(\PP_{n_0} \cap B(0, \Rex)\big) \neq 0) \ge \P(\PP_{n_0} \cap B(0,r) \neq \es) >0,
$$
which concludes the proof of positivity of the limiting variance.
\enp
}

%
%
\subsection{Quantitative normal approximation}
\label{ss:clt2}
We now turn to the quantitative CLT.  To accomplish this, we utilize the Malliavin-Stein framework of quantitative normal approximation of Poisson functionals \cite[Theorem 1.1]{mehler}: 
For any square-integrable functionals $F \in L^2_{ \PP_n}$  
such that $\E F = 0$ and $\text{Var} F=1$ satisfying
\begin{align*}
	\E\Big[ \int_{ W_n} (D_{x} F)^2 \d  x\Big]< \infty,
\end{align*}
we have 
$
	d_W\big(F, \mc N(0,1) \big) \le 4 \sqrt{\g_1'} + \sqrt{\g_2'} + \g_3',
	$
where $\g_1',\g_2',\g_3'$ denote the error terms defined as
\begin{align}
	\label{eq:g1}
	\g_1' &:=	\int_{ W_n^3} \sqrt{\E[(D_{x}F)^2(D_{y}F)^2]}\sqrt{\E[(D^2_{x, z}F)^2(D^2_{y, z}F)^2]} \d  x \d y \d z,\\
	\label{eq:g2}
	\g_2' &:=	\int_{ W_n^3} {\E[(D^2_{x, z}F)^2(D^2_{y, z}F)^2]} \d  x \d y\d z,\\
	\label{eq:g3}
	\g_3' &:=	\int_{ W_n} {\E[|D_{x}F|^3]} \d  x.
\end{align}
Applying this to the normalized random variable $$\widehat F :=\frac{F(\PP_n) - \E[F(\PP_n)]}{\sqrt{\textup{Var}[F(\PP_n)]}}
$$ with 
\begin{align}\label{deff}
  F(\PP_n) := L_n(f),
\end{align}
we have
\begin{align} \label{clt}
	d_W\big(\widehat F, \mc N(0,1) \big) \le 4\textup{Var}(F)^{-1}\sqrt{\g_1'} + \textup{Var}(F )^{-1}\sqrt{\g_2'} + \textup{Var}(F )^{-3/2}\g_3',
\end{align} 
where $\g_1',\g_2',\g_3'$ are the  error terms defined in \eqref{eq:g1}-\eqref{eq:g3} with $F$ given in \eqref{deff}.

Therefore, it will be crucial to control the moments of the first- and second-order difference operators. This   is established in the following lemma.

\renewcommand{\theenumi}{\roman{enumi}}%
%
%
\bel[Fourth moment bounds for the difference operators]
\label{lem:diffmom2} Consider \textup{RGG($r$)} with any given $r>0$. Let $f$ be a degree-$m$ polynomial and let $F=F(\PP_n)$ be defined in \eqref{deff}. Then, there exists   $C>0$ (depending on $d,r$ and $f$) such that the following holds for all $n \in \bN$ and $ x,y  \in  W_n$:   
\been 
\im  $\E \big[(D_xF) ^4\big] \le C$.
\im  
$\E\big[(D_{x,y}^2 F)^4\big] \le  C         \one \{|x-y|\le 4mr\}.
$
  
	\enen 
\enl

Given Lemma \ref{lem:diffmom2}, we conclude the proof of Theorem \ref{thm:polt1}.
\bep[Proof of Theorem \ref{thm:polt1}]
 
 Let $f$ be a degree-$m$ polynomial and let $F=F(\PP_n)$ be defined in \eqref{deff}. 
 By H\"older's inequality and Lemma \ref{lem:diffmom2},  
 \begin{align*} \label{410}
 \g_1' &\le \int_{ W_n^3} (\E|D_{x} F|^4)^{1/4} (\E|D_{y} F|^4)^{1/4}  (\E|D_{x,z}^2 F|^4)^{1/4} (\E|D_{y,z}^2 F|^4)^{1/4} \d  x \d y \d z \nonumber \\
   & \le C\int_{ W_n^3}     \one \{|x-z|\le 4mr\}       \one \{|y-z|\le 4mr\}    \d  x \d y \d z  \le C(mr)^{2d}n.
 \end{align*} 
Next, we aim to bound $   \g_2'.$ By H\"older's inequality and Lemma \ref{lem:diffmom2}, 
 \begin{align*}
   \g_2'  \le \int_{ W_n^3} (\E|D_{x,z}^2 F|^4)^{1/2} (\E|D_{y,z}^2 F|^4)^{1/2} \d  x \d y \d z  \le \int_{ W_n^3}   \one \{|x-z|\le 4mr\}       \one \{|y-z|\le 4mr\} \d  x \d y \d z \le C(mr)^{2d}n.
 \end{align*} 
 Finally, by Lemma \ref{lem:diffmom2} again, 
 \begin{align*}
   \g_3' \le \int_{ W_n} {\E[|D_{x}F|^4]}^{3/4} \d  x \le   \int_{W_n} C \d x =Cn.
 \end{align*}
Therefore, if $\sigma_f^2> 0$, then applying the above estimates to \eqref{clt},
\begin{align*}
  d_W\big(\widehat F, \mc N(0,1) \big) \le Cn^{-1}\sqrt{n} + Cn^{-1}\sqrt{n} + Cn^{-3/2}n \le Cn^{-1/2}.
\end{align*}

\enp 
As noted in \eqref{sumofscore}, the functional $F(\PP_n)$ can be written as a sum of scores $g_f$ defined in \eqref{score}. Hence, to derive the moment bounds in Lemma \ref{lem:diffmom2}, it is crucial to obtain corresponding moment bounds for the scores $g_f$. 
For $ \mathcal Q \in  {\textbf{N}}$ and $\mathsf z\in  \mathcal Q,$ the difference operator of score functions is similarly defined as 
$$D_{ x} g_f(\mathsf  z, \mathcal Q) := g_f(\mathsf  z,  \mathcal Q + \delta_{ x}) - g_f(\mathsf   z, \mathcal Q)$$
and 
$$D_{ x, y}^2 g_ f(\mathsf  z, \mathcal Q):= g_f(\mathsf  z, \mathcal Q + \delta_{  x} + \delta_{y} ) -g_f ( \mathsf z, \mathcal Q + \delta_{  x} ) -g_f( \mathsf z, \mathcal Q + \delta_{y} ) + g_f (\mathsf  z, \mathcal Q).$$ 
 For the sake of readability, we introduce the following notation: For $\ell\in \bN$ and $ z_1, \dots,  z_\ell \in  W_n$, writing $ \zz_\ell := \{ z_1, \ldots,  z_\ell\}$,
\begin{align}
    \PP_{n,  \zz_\ell} :=   \PP_n\cup  \zz_\ell.
\end{align}
Also for a single element $x\in  W_n$, we write $  \PP_{n, x} :=  \PP_n\cup\{x\}$ and $   \PP_{n,  \zz_\ell,x } :=   \PP_n\cup  \zz_\ell\cup \{x\}.$ 
\bel[Moment bounds]
\label{lem:mom2}
Consider \textup{RGG($r$)} with any given $r>0$.
Let {$\ell,m \in \bN$} and $f$ be a degree-$m$ polynomial. Then, the following holds for any $ x   ,y   \in  W_n$ and $\zz_\ell := \{z_1,\dots,z_\ell\}$ with $z_i   \in  W_n$ $(i=1,\dots,\ell)$:  
\been
\im For any $k \in \mathbb N,$ there exists a constant $C >0$ {(depending on $r,d,k,\ell$ and $f$)} such that  for any $n\in \mathbb N,$
$$\sup_{n \in \bN}\E[|g_f(z_1,  \PP_{n, \zz_\ell} )|^{k}]\le C .$$
\im   For any $n\in \mathbb N,$
$$
  D_{ x} g_f(z_1,  \PP_{n, \zz_\ell}) \ne 0  \Rightarrow  |x - z_1| \le   mr .$$
\im  For any $n\in \mathbb N,$ 
$$
 D_{ x,y}^2 g_f(z_1,  \PP_{n, \zz_\ell}) \ne 0 \Rightarrow |x - z_1| \vee |y - z_1| \le  mr.$$
\enen 
\enl
%
%
\bep 
\emph{Part (i).} 
By linearity, we may assume that $f(x) = x^m$.  
Observe that expanding the expression \( |g_f(z_1, \mathcal{P}_{n,\mathbf{z}_\ell})|^k \) 
yields a sum of products, each corresponding to the concatenation of \(k\) loops of length \(m\), all of which pass through \(z_1\) at least once.   
Thus writing $ X_1:=z_1,$
\begin{align} \label{501}
  \big|g_f(z_1, \PP_{n, \zz_\ell})\big|^k \le \sum_{p = 2}^{km}  {\f1{(p-1)!}}  \sum_{\substack{( X_2,\dots, X_p) \in   \PP_{n, \zz_\ell} ^{(p-1)}}} \sum_{\substack{\pi \in \Sigma_{km,p} \\ \pi(1)=1} }\prod_{j =1}^{km} A( X_{\pi(j)},  X_{\pi(j + 1)} ;  \PP_{n, \zz_\ell}),
\end{align} 
where we set $\pi(km +1) := \pi(1)$.

For $\pi \in \Sigma_{km,p}$, we associate a directed cycle on the graph with 
vertex set $\{1,2,\dots,p\}$, in which each $i=1,2,\dots,km$ contributes 
a directed edge from $\pi(i)$ to $\pi(i+1)$. Since $\pi$ is surjective, this graph is connected when regarded as an undirected graph. 
Let $\mathsf{Tree}(\pi)$ denote the collection of all spanning trees of this 
directed cycle, viewed as an undirected graph.
 Since every entry of the adjacency matrix is either 0 or 1, for any \(\mc T \in \mathsf{Tree}(\pi)\), 
\begin{align} \label{502}
  \prod_{j=1}^{km} A\bigl( X_{\pi(j)},  X_{\pi(j + 1)};  \PP_{n, \zz_\ell} \bigr)
 \le 
\prod_{\{i,j\}\in E(\mc T)} A\bigl( X_i,  X_j;  \PP_{n, \zz_\ell} \bigr),
\end{align}
where \(E(\mc T)\) denotes the collection of (undirected) edges in $\mc T$ and \(\{i,j\}\) represents the (undirected) edge between vertices \(i\) and \(j\).

When employing the Mecke formula, we must proceed carefully because some \(X_i\) belong to the Poisson point process \(\PP_n\), while others belong to \(\zz_\ell\). To resolve this, we partition the vertex set of each spanning tree. Concretely, for fixed $2\le p\le km$ and a tree \(\mc T\) with \(p=s_1+ s_2\)  vertices labeled \(\{1,2,\dots, p\}\), we write 
\[
V(\mc T) = S_1 \sqcup S_2
\]
(here, $\sqcup$ denotes disjoint union), 
where \(|S_1| = s_1\) and \(|S_2| = s_2\). We designate \(S_1\) as the subset corresponding to points in \(\PP_n\) and \(S_2\) as the subset corresponding to points in \(\zz_\ell\). This partition allows us to apply the Mecke formula correctly while keeping track of which points originate from each source.

For $p\in \bN$, let $T_p$ be the set of trees with labeled vertices $\{1,2,\dots,p\}.$ 
Note that there is $C_m>0$ (depending only on $m$) such that $|\Sigma_{km,p} | \le C_m$ for any $2\le p\le km.$ Thus, using \eqref{501} and \eqref{502},  
\begin{align} \label{505}
   |g_f(z_1,  \PP_{n, \zz_\ell})|^k  
  &\le C_m\sum_{p = 2}^{km} \sum_{\mc T \in T_p} \sum_{ \substack{S_1,S_2 \\ \{1,\dots,p\}=S_1 \sqcup S_2 }} \sum_{ \substack{
  \xi: S_2 \rightarrow \{1,\dots,\ell\} 
 \text{ injective} \\
  1\in \xi(S_2)
  }}  \sum_{(Y_\tau)_{\tau\in S_1} \in  \PP_{n}^{(|S_1|)}} {\f1{|S_1|!}} \Big[\prod_{ \substack{\{i, j\}\in E(\mc T) \\ i,j\in S_1}} A(Y_i,Y_j ;  \PP_{n, \zz_\ell}) \nonumber \\
  & \cdot \prod_{ \substack{\{i, j\}\in E(\mc T) \\ i\in S_1, j\in S_2}} A(Y_i, z_{\xi_j} ; \PP_{n, \zz_\ell})  \cdot \prod_{ \substack{\{i, j\}\in E(\mc T) \\ i,j\in S_2}} A(z_{\xi_i} ,z_{\xi_j} ;  \PP_{n, \zz_\ell}) \Big].
\end{align}
Note that the condition \(1 \in \xi(S_2)\) in the fourth summation ensures that the corresponding vertex set contains \(z_1\), since we are considering  paths that pass through \(z_1\).

Let us now take the expectation of above.
For any fixed $S_1,S_2$ with $\{1,2,\dots,p\}=S_1 \sqcup S_2$ and a tree $\mc T\in T_p$ along with $(z_{\xi_j})_{j\in S_2}\in \zz_\ell$ in the above summation, by Mecke formula  (Lemma \ref{multi}), 
\begin{align} \label{506}
&\E\Big[\sum_{(Y_\tau)_{\tau\in S_1} \in  \PP_{n}^{(|S_1|)}} \Big ( \prod_{ \substack{\{i, j\}\in E(\mc T) \\ i,j\in S_1}} A(Y_i,Y_j ;  \PP_{n, \zz_\ell})\cdot \prod_{ \substack{\{i, j\}\in E(\mc T) \\ i\in S_1, j\in S_2}} A(Y_i, z_{\xi_j} ;  \PP_{n, \zz_\ell})  \cdot \prod_{ \substack{\{i, j\}\in E(\mc T) \\ i,j\in S_2}} A(z_{\xi_i} ,z_{\xi_j} ;  \PP_{n, \zz_\ell})  \Big) \Big ] \nonumber \\
&= \int_{W_n^{|S_1|}}   \E\Big[\prod_{ \substack{\{i, j\}\in E(\mc T) \\ i,j\in S_1}} A(y_i,y_j ;  \PP_{n, \zz_\ell} \cup \{y_i,y_j\} ) \cdot \prod_{ \substack{\{i, j\}\in E(\mc T) \\ i\in S_1, j\in S_2}} A(y_i, z_{\xi_j} ;  \PP_{n, \zz_\ell} \cup \{y_i\} )  \nonumber \\ 
&\quad \quad \quad \quad \cdot \prod_{ \substack{\{i, j\}\in E(\mc T) \\ i,j\in S_2}} A(z_{\xi_i} ,z_{\xi_j} ;  \PP_{n, \zz_\ell}) \Big] \prod_{\tau\in S_1} \d y_\tau   .
\end{align} 
  Recalling the rule of connections in random geometic graphs, this quantity is written as 
\begin{align} \label{508}
&   	\int_{W_n^{|S_1|}} \Big[ \prod_{ \substack{\{i, j\}\in E(\mc T)  }}  \one \{|y_i- y_j|\le r\} \cdot \prod_{ \substack{\{i, j\}\in E(\mc T)  }}  \one \{|y_i- z_{\xi_j}|\le r\} \prod_{ \substack{\{i, j\}\in E(\mc T)  }}  \one \{|z_{\xi_i}- z_{\xi_j}|\le r\} \Big] \prod_{\tau\in S_1} \d y_\tau .
\end{align}
To estimate this,  we regard $\mc T$ as a tree rooted at $z_1$, and then integrate out the variables \(\{y_k\}_{k \in S_1}\) one by one, starting from those associated with leaf vertices in   \(\mc T\). Concretely, let \(u\) be a leaf and let \(v\) be the unique vertex connected to \(u\) after the series of procedures. We use the following inequalities:
$$\int_{W_n}  \one \{|y_u- w|\le r\}  \d y_u \le Cr^d \ \text{ and } \  \one \{|z_{\xi_u}- w|\le r\}  \le 1, \qquad \forall w\in W_n.$$  
We repeat this procedure for each leaf (and subsequently for any new leaves that appear after each integration).
 Observe that a tree \(\mc T\)  has exactly \(|\mc T|-1\) edges, and each edge involves at most one \(y\)-variable integration step. Furthermore, since \(z_1\) serves as the root of \(\mc T\), once all integration steps are complete, only \(z_1\) remains. Because \(z_1\) is not an integration variable, it does not contribute any additional factor to the bound.  Therefore,  we conclude the proof.

\medskip

%
%
\emph{Part (ii).} Observe   that $D_{ x} g_f(z_1,  \PP_{n, \zz_\ell}) =   g_f(z_1, \PP_{n, \zz_\ell,  x}) - g_f(z_1, \PP_{n, \zz_\ell}) \ne 0$ implies the 
existence of $z_1= X_1, X_2,\dots,  X_{p-1},X_p =x\in  \PP_{n, \zz_\ell ,  x}$ with $2\le p \le m$ such that $ X_i$ and $X_{i + 1}$ are connected for $i=1,\dots,p-1.$ By triangle inequality, we obtain  
 $|x-z_1| \le  mr.$  
\medskip

%
%
\emph{Part (iii).} 
One can write $D_{x,y}^2g_f(z_1, \PP_{n,\zz_\ell})$ as a difference of $D_{ x}g_f(z_1,  \PP_{n, \zz_\ell ,y})$ and $D_{ x}g_f(z_1,  \PP_{n, \zz_\ell})$. Hence, $D_{x,y}^2g_f(z_1, \PP_{n,\zz_\ell})\ne 0$ implies that either $D_{ x}g_f(z_1,  \PP_{n, \zz_\ell ,y})$ or $D_{ x}g_f(z_1,  \PP_{n, \zz_\ell})$ is non-zero.
Thus by part (ii),
\begin{align*}
 D_{ x, y}^2 g_f(z_1,  \PP_{n, \zz_\ell}) \ne 0 \Rightarrow    |x - z_1|  \le mr  .
\end{align*}
Now, interchanging the roles of $ x$ and $y$ gives
\begin{align*}
  D_{ x, y}^2 g_f(z_1,  \PP_{n, \zz_\ell}) \ne 0 \Rightarrow    |y - z_1|    \le mr  .
\end{align*}
Therefore, we conclude the proof. 
\enp

Finally, we proceed with the proof of Lemma \ref{lem:diffmom2}, inspired by the argument of \cite[Lemma 5.5]{mal_stab}. 
%
%
\bep[Proof of Lemma \ref{lem:diffmom2}] We prove the two parts separately. Recall that we write $  \PP_{n, x} :=  \PP_n\cup\{x\}$.
\medskip

\noindent \emph{Part (i).}
Recalling the definition of difference operator, we get as in \cite[Lemma 5.2]{mal_stab} that 
\begin{align}
	\label{lem:52}
	D_{ x}F( \PP_n) = g_f( x, \PP_{n,  x}) + \sum_{ X \in \PP_n} D_{ x} g_f(  X,  \PP_n).
\end{align}
Thus,
$
	\E \big[|D_{ x}F( \PP_n)|^4 \big]
 \le 16 \E\big[ g_f( x, \PP_{n,  x} )^4\big] + 16 \E\big[\big( \sum_{ X \in \PP_n} D_{ x} g_f(  X,  \PP_n)\big)^4\big].
 $
By Lemma \ref{lem:mom2}(i),  
the first term can be controlled as
\begin{align} \label{451}
  \E\big[ g_f( x, \PP_{n,  x} )^4\big] \le C   .
 \end{align}
To bound the second term, let 
$Z:=\#\big\{  X \in  \PP_n\co D_{ x} g_f( X, \PP_n)\ne 0\big\}$
denote the (random) number of non-zero summands in the sum in \eqref{lem:52}.
Then, by Jensen's inequality applied to the function $u \mapsto u^4$,
\begin{align*}
\Big(\sum_{ X \in  \PP_n} D_{ x} g_f( X, \PP_n)\Big)^4 \le Z^4 \sum_{ X \in \PP_n} Z^{-1} | D_{ x} g_f(  X,  \PP_n) | ^4
= Z^3 \sum_{ X \in \PP_n} | D_{ x} g_f(  X,  \PP_n)|^4.
\end{align*}
By deciding whether points in different sums are identical or distinct, we write
$$
\E\big[ Z^3 \sum_{ X \in \PP_n} |D_{ x} g_f(  X,  \PP_n)|^4\big]=I_1+7I_2+6I_3+I_4,
$$
where  the coefficients arise from the Stirling partition number and
$$
I_\ell:=\E \Big[\sum_{( X_1,\dots,  X_\ell)\in  \PP_n^{(\ell)}} |D_{ x} g_f( X_1, \PP_n)|^4\prod_{j=1}^\ell \one\{D_{ x} g_f( X_j, \PP_n)\ne 0\}\Big], \qquad 1\le \ell \le 4.
$$
For $z_i \in  W_n $ with $i=1,\dots,\ell$, we write  $  \zz_\ell:= (z_1,\dots,z_\ell)$, and let   $K = K(z_1,\dots,z_\ell) \in \{1,\dots,\ell\}$ be the index such that
\begin{align} \label{j}
  |x-z_K| = \max \{|x-z_j| : j=1,\dots,\ell\}.
\end{align}
Then, by the multivariate Mecke formula (Lemma \ref{multi}) and H\"older's inequality,
\begin{align*}
	I_\ell&= \int_{ W_n^\ell} \E \Big[ | D_{ x} g_f(z_1, \PP_{n,  \zz_\ell}) | ^4\prod_{j=1}^\ell \one\{D_{ x} g_f(z_j,  \PP_{n,  \zz_\ell}) \ne 0 \} \Big] \d \zz_\ell\\	
   &\le  \int_{ W_n^\ell} \E \Big[ | D_{ x} g_f(z_1, \PP_{n,  \zz_\ell}) | ^4 \one\{D_{ x} g_f(z_K,  \PP_{n,  \zz_\ell}) \ne 0 \} \Big] \d \zz_\ell\\	
	&\le \int_{ W_n^\ell} \big( {\E |  g_f(z_1, \PP_{n,  \zz_\ell,x})|^4} +  {\E | g_f(z_1, \PP_{n,  \zz_\ell})|^4} \big)    \one \{|x - z_K| \le mr\}  \d \zz_\ell \\
    &\le  C   \int_{W_n^\ell}   \one \{|x - z_K| \le mr\} \d \zz_\ell,
\end{align*}   
where we used  Lemmas \ref{lem:mom2} (i) and (ii).
Recalling the index $K$ defined in \eqref{j}, using a change of variables $w_i:= (x - z_i)/m$ and  writing $ \textbf{w}_\ell: = (w_1,\dots,w_\ell)$,  
 \begin{align} \label{442}
	 \int_{W_n^\ell}   { \one \{|x - z_K| \le mr\} } \d \zz_\ell  
	 &= m^{d\ell} \int_{W_n^\ell}   { \one \{ (\max \{|w_1|,\dots,|w_\ell|\}  \le r\}} \d \textbf{w}_\ell \nonumber \\
   &\le      
	 m^{d\ell}\int_{W_n^\ell}  \one \{ |\textbf{w}_\ell| / \sqrt{\ell} \le r \}  \d \textbf{w}_\ell     \le  C ,
\end{align} 
where we used   $\max \{|w_1|,\dots,|w_\ell|\} \ge |\textbf{w}_\ell|/\sqrt{\ell} $ in the first inequality and absorbed the factor $m^{d\ell}$ into the constant $C$ for $1\le \ell \le 4$. Thus, we have $  I_\ell  
  \le   C  $ for $1\le \ell \le 4$.
Therefore, using this along with \eqref{451}, we conclude the proof.

~

%
\emph{Part (ii).} 
Recalling the definition of second order difference operator, by \cite[Lemma 5.2]{mal_stab},
$$
D_{ x,y}^2 F( \PP_n)=D_{ x}g_f(y, \PP_{n, y} ) + D_{y}g_f( x, \PP_{n,  x} ) + \sum_{ X \in \PP_n} D_{ x,y}^2 g_f(  X,  \PP_n).
$$
Thus 
\begin{align} \label{521}
  2^{-8}\E |D_{ x,y}^2 F( \PP_n)|^4 \le \E |D_{ x}g_f(y, \PP_{n, y}) |^4 + \E |D_{y}g_f( x, \PP_{n,  x}) |^4 + \E\Big[\Big(\sum_{ X \in \PP_n} D_{ x,y}^2 g_f( X, \PP_n)\Big)^4\Big]. 
\end{align}
Let us control the first two terms.  
By Lemmas \ref{lem:mom2} (i) and (ii),
 \begin{align} \label{452}
  \E |D_{ x}g_f  (y, \PP_{n, y}) |^4  &= \E [ |D_{ x}g_f(y, \PP_{n, y}) |^4 \one \{|x - y| \le mr\} ] \nonumber \\
    &\le C\big( {\E |  g_f(z_1, \PP_{n,  y,x})|^4} +  {\E | g_f(z_1, \PP_{n, y})|^4} \big)    \one \{|x - y| \le mr\}  \le C   \one \{|x - y| \le mr\}  .
\end{align}
We have the same bound for $\E |D_{y}g_f( x, \PP_{n,  x}) |^4$ as well.

Next, for the last term in \eqref{521}, we argue analogously to Part (i) and find that
$$
\E\Big[\sum_{ X \in \PP_n} |D_{ x,y}^2 g_f( X, \PP_n)|^4\Big] \le J_1+7J_2+6J_3+J_4,
$$
where
$$
J_\ell:=\E\Big[ \sum_{( X_1,\dots, X_\ell)\in \PP_n^{(\ell)}} |D_{ x,y}^2 g_f( X_1, \PP_n)|^4 \prod_{j=1}^\ell \one\{D_{ x,y}^2 g_f( X_j, \PP_n)\ne 0\}\Big], \qquad 1\le \ell \le 4.
$$ For $z_i \in  W_n $ with $i=1,\dots,\ell$, write  $  \zz_\ell:= (z_1,\dots,z_\ell)$, and  let    $K = K(z_1,\dots,z_\ell) \in \{1,\dots,\ell\}$ be   \begin{align} \label{j1}
K: = \argmax_{1\le j \le \ell} (  |x - z_j| \vee |y - z_j| ),
\end{align}
and define
\begin{align}
  A_1 := \{  \zz_\ell \in W_n^\ell: |x-z_K| \ge |y-z_K|\},\quad   A_2 := \{  \zz_\ell \in W_n^\ell: |x-z_K| < |y-z_K|\}.
\end{align}
 With the aid of Mecke formula, by the same reasoning as in Part (i),
\begin{align} \label{444}
   J_\ell &= \int_{ W_n^\ell} \E \Big[ | D_{ x,y}^2 g_f(z_1, \PP_{n,  \zz_\ell}) | ^4\prod_{j=1}^\ell \one\{D_{ x,y}^2 g_f(z_j,  \PP_{n,  \zz_\ell}) \ne 0 \} \Big] d  \zz_\ell \nonumber \\	
   &\le C \int_{ W_n^\ell} \mathbb E (|g_f(z_1, \PP_{n,  \zz_\ell, x,y})|^4 + |g_f(z_1, \PP_{n,  \zz_\ell, x})|^4+|g_f(z_1, \PP_{n,  \zz_\ell,y})|^4+|g_f(z_1, \PP_{n,  \zz_\ell})|^4) \nonumber \\
	&\qquad \qquad \qquad \cdot \one \{ |x - z_K| \vee |y - z_K| \le mr \}\d \zz_\ell  \nonumber \\
    &\le C \int_{ W_n^\ell}  \one \{ |x - z_K| \vee |y - z_K| \le mr \}\d \zz_\ell \nonumber \\ 
    &\le  C \int_{ A_1}  \one \{ |x - z_K|   \le mr \}\d \zz_\ell+ C \int_{  A_2}  \one \{   |y - z_K| \le mr \}\d \zz_\ell.
 \end{align}  
Observe that the first term  above is bounded as
\begin{align*}
  C  \int_{A_1  }       \one \{|x - z_K| \le mr\}    \d \zz_\ell &\le    C \int_{|x - z_K| \ge |x-y|/2}   { \one \{|x - z_K| \le mr\} } \d \zz_\ell \nonumber \\
   &\le    C m^{d\ell} \int_{|\textbf{w}_\ell| \ge |x-y|/(2m
	)}  \one \big\{ |\textbf{w}_\ell| / \sqrt{\ell} \le r \big\} \d \textbf{w}_\ell  \le C  \one \{ |x-y|  \le 2mr \sqrt{\ell}  \} ,
\end{align*}  
where we absorbed the factor $m^{d\ell}$ into the constant $C$ for $1\le \ell \le 4$. 
Similarly, we get the same bound for the second term in \eqref{444}. Hence,  we deduce $\max_{1\le \ell \le 4}  J_\ell \le C      \one \{ |x-y|  \le 4mr  \}.$
Therefore, by this along with \eqref{452}, we conclude the proof.  

\enp

\section{Proof of Theorem \ref{thm:fun}--General functions}
\label{sec:fun}
In this section, we extend the CLT from polynomial test functions to a substantially broader class. The main tool enabling this extension is a general approximation theorem, originally developed in the context of  ERG  (see \cite[Proposition 4]{shch1}). We present this result in Section~\ref{ss:key}, where we also formulate the key intermediate statement, Theorem~\ref{main prop}, and demonstrate how it leads to the proof of Theorem~\ref{thm:fun}. The proof of Theorem~\ref{main prop} is then provided in Section~\ref{ss:mp}.

\subsection{Key ingredients}
\label{ss:key}
The following result serves as a key ingredient in extending the central limit theorem to a broader class of test functions. We state here a version adapted from \cite[Lemma~7.9]{cltsparse}; see also \cite[Proposition~4]{shch1} for its original formulation.

%
%
\begin{proposition}
	\label{aux prop}
	Let $(a_n)_{n \ge 1}$ be a sequence of real numbers, 	$\mc L$ be a {$\mathbb R$-vector space of real-valued functions} equipped with a norm $\norm{\cdot}_\mc L$, and $(\xi_i^{(n)})_{i=1}^n$ be a triangular array of random variables. For a function $\phi : \mathbb R \rightarrow \mathbb R$ in $\mc L$, set
  \begin{align*}
    Z_n(\phi): = a_n\cdot  \sum_{i=1}^n (\phi (\xi_i^{(n)}) - \mathbb E [ \phi (\xi_i^{(n)}) ]).
  \end{align*}
	Suppose that the following assumptions hold:
  \begin{enumerate}
    \item There is a dense subspace $\widetilde{\mc L} \su \mc L$ and a quadratic form $V: \widetilde{\mc L} \rightarrow [0,\infty) $ such that for any $\phi \in \widetilde{\mc L}$,
	    \begin{align} \label{2}
      Z_n (\phi) \overset{\textup{d}}{\rightarrow} \mc N (0, V(\phi))
    \end{align}
  as $n\rightarrow \infty$.
    \item There exists $C>0$ such that for any $\phi \in \mc L$ and $n \in \mathbb N$,
    \begin{align} 
     \mathbb E [Z_n(\phi)^2 ] \le  C \norm{\phi}_\mc L^2.
    \end{align}
  \end{enumerate}
	Then, $V$ is continuous on $\widetilde{\mc L}$ which can be uniquely continuously extended to $\mc L$, and \eqref{2} holds for all $\phi \in \mc L.$
\end{proposition}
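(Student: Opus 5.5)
We prove Proposition~\ref{aux prop} by a standard density and approximation argument in the space of laws, using characteristic functions. The plan has three parts: show $V$ is continuous on $\widetilde{\mc L}$, extend it, and transfer the CLT to all of $\mc L$.

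\emph{Step 1: continuity of $V$.} Take $\phi,\psi\in\widetilde{\mc L}$. Since $Z_n$ is linear in $\phi$, assumption (1) applied to $\phi-\psi\in\widetilde{\mc L}$ gives $Z_n(\phi-\psi)\overset{\textup d}{\to}\mc N(0,V(\phi-\psi))$. By Fatou's lemma, or by lower semicontinuity of the second moment under weak convergence (Skorokhod representation), and assumption (2),
\[
V(\phi-\psi)\le\liminf_n\E[Z_n(\phi-\psi)^2]\le C\norm{\phi-\psi}_{\mc L}^2 .
\]
Because $V$ is a nonnegative quadratic form, $\sqrt V$ is a seminorm; this is the Cauchy--Schwarz argument for the associated bilinear form. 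Hence $|\sqrt{V(\phi)}-\sqrt{V(\psi)}|\le\sqrt{V(\phi-\psi)}\le\sqrt C\norm{\phi-\psi}_{\mc L}$. So $\sqrt V$ is Lipschitz on $\widetilde{\mc L}$ and extends uniquely by density to a continuous seminorm on $\mc L$. Its square is the continuous extension of $V$, and it remains a quadratic form, by continuity of the parallelogram identity.

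\emph{Step 2: convergence for general $\phi$.} Fix $\phi\in\mc L$ and $t\in\R$, and pick $\phi_k\in\widetilde{\mc L}$ with $\phi_k\to\phi$ in $\mc L$. Since $|e^{ia}-e^{ib}|\le|a-b|$, we can split
\[
\big|\E e^{itZ_n(\phi)}-e^{-t^2V(\phi)/2}\big|\le |t|\,\E|Z_n(\phi-\phi_k)|+\big|\E e^{itZ_n(\phi_k)}-e^{-t^2V(\phi_k)/2}\big|+\big|e^{-t^2V(\phi_k)/2}-e^{-t^2V(\phi)/2}\big| .
\]
We bound each term in turn:
\begin{itemize}
\item By Cauchy--Schwarz and (2), the first term is at most $|t|\sqrt C\norm{\phi-\phi_k}_{\mc L}$, uniformly in $n$.
\item The second term tends to $0$ as $n\to\infty$ for each fixed $k$, by (1).
\item The third term tends to $0$ as $k\to\infty$, by Step 1.
\end{itemize}
Taking $\limsup_n$ and then $k\to\infty$ gives pointwise convergence of characteristic functions. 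Lévy's continuity theorem then yields $Z_n(\phi)\overset{\textup d}{\to}\mc N(0,V(\phi))$.

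\emph{Main obstacle.} The only delicate point is Step 1. We need the bound on $V$ by $\norm{\cdot}_{\mc L}^2$, which requires passing second moments to the weak limit, and the seminorm property needed for the extension. The first is handled by the Fatou/Skorokhod argument. The second follows because $V$ is assumed to be a nonnegative quadratic form. Note that if $V$ were merely the limit of variances, we would also need uniform integrability, but the $\liminf$ inequality suffices here.
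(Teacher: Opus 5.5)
Your argument is correct and is the standard proof of this result. The paper does not prove it but imports it from \cite[Lemma~7.9]{cltsparse} and \cite[Proposition~4]{shch1}, whose argument is the same three-term splitting of the characteristic function: the first term is bounded uniformly in $n$ by $|t|\sqrt C\norm{\phi-\phi_k}_{\mc L}$ via assumption (2), and L\'evy's continuity theorem concludes. Your Step~1 obtains $V(\phi-\psi)\le C\norm{\phi-\psi}_{\mc L}^2$ from lower semicontinuity of second moments under convergence in distribution and then uses that $\sqrt V$ is a seminorm; this correctly supplies the continuity and unique extension of $V$ that the statement also asserts.
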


In view of the preceding proposition, we have already established the central limit theorem for polynomial test functions. To extend this result to a broader class of test functions, it remains to verify the second requirement—namely, the variance upper bound for general test functions. The following theorem provides this verification.

%
%
\begin{theorem}\label{main prop}  
For any constant $c \neq  0$,  there exists $C>0$ (depending only on $c$) such that for any  $f\in H^2(\mathbb R)$ with    $f(0)=0$,
  \begin{align}\label{theorem9}
   \frac1 n \textup{Var} \big(\textup{Tr} [f(A_n) e^{cA_n} ]\big )  \le C ( \norm{f}^2_2 + \norm{f''}^2_2).
\end{align}
Here, $ H^2(\mathbb R)$ denotes the Sobolev space $W^{2,2}(\mathbb R) $ and $\norm{\cdot}_2$ denotes the standard $L^2$-norm with respect to the  Lebesgue measure.
\end{theorem}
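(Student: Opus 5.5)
We follow the Fourier-analytic strategy of \cite{shch1}. Write $h(x):=f(x)e^{cx}$ and represent the trace through the Fourier transform of $f$: with $\hat f(t)=\int f(x)e^{-itx}\,\d x$,
\[
\textup{Tr}[f(A_n)e^{cA_n}] = \frac1{2\pi}\int_\R \hat f(t)\,\textup{Tr}\big[e^{(c+it)A_n}\big]\,\d t .
\]
Minkowski's inequality (in the form $\textup{Var}(\int X_t\mu(\d t))^{1/2}\le \int \textup{Var}(X_t)^{1/2}|\mu|(\d t)$) then gives
\[
\textup{Var}\big(\textup{Tr}[f(A_n)e^{cA_n}]\big)^{1/2}\le \frac1{2\pi}\int_\R |\hat f(t)|\,\textup{Var}\big(\textup{Tr}[e^{(c+it)A_n}]\big)^{1/2}\d t .
\]
If we can prove $\textup{Var}(\textup{Tr}[e^{(c+it)A_n}])\le C n(1+t^2)^{\alpha}$ with $\alpha<3/2$, Cauchy--Schwarz yields
\[
\int|\hat f(t)|(1+t^2)^{\alpha/2}\,\d t\le \Big(\int |\hat f|^2(1+t^2)^2\Big)^{1/2}\Big(\int (1+t^2)^{\alpha-2}\Big)^{1/2}\le C(\norm{f}_2+\norm{f''}_2),
\]
by Plancherel. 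This gives the claim. The condition $f(0)=0$ is used only to make sure that isolated vertices, and more generally the zero eigenvalue, do not contribute spurious terms. For example, we may replace $e^{zA}$ by $e^{zA}-I$ where convenient.

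The heart of the argument is therefore the variance bound for $G_n(z):=\textup{Tr}[e^{zA_n}]$ with $z=c+it$. We use the Poincaré inequality (Lemma~\ref{poincare}) applied to real and imaginary parts:
\[
\textup{Var}\,G_n(z)\le \int_{W_n}\E|D_xG_n(z)|^2\,\d x .
\]
Since $\PP_n$ is homogeneous, it suffices to show $\E|D_xG_n(z)|^2\le C(1+|t|)^{2}$ uniformly in $x$ and $n$. Write $A'$ for the adjacency matrix of $\PP_{n,x}$, so that $A'$ is $A_n\oplus 0$ plus a rank-two perturbation $E=e_xv^\top+ve_x^\top$, where $v$ is the indicator vector of the neighbours of $x$. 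Duhamel's formula gives
\[
D_xG_n(z)=\textup{Tr}\,e^{zA'}-\textup{Tr}\,e^{z(A_n\oplus0)}-0+1= z\int_0^1 \textup{Tr}\big[E\,e^{z(sA'+(1-s)(A_n\oplus 0))}\big]\,\d s + \big(\text{the }+1\text{ from the new zero block}\big).
\]
Each of the matrices in the exponent has real part of its spectrum bounded by $\max(\lambda_{\max})$. The trace against a rank-two $E$ is bounded by $2\norm{v}\,\norm{e^{z M_s}}\le 2\sqrt{\deg x}\,e^{|c|\norm{A'}_{\rm loc}}$. This produces the factor $|z|\le C(1+|t|)$.

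The main obstacle is that $\norm{A_n}$ is not bounded: the maximal degree over $W_n$ grows like $\log n/\log\log n$, so $e^{|c|\norm{A}}$ destroys uniformity. The key is to localize. The quantity $\textup{Tr}[Ee^{zM}]=2\langle v,e^{zM}e_x\rangle$ can be expanded as a power series in walks starting at $x$:
\[
|\langle v,e^{zM}e_x\rangle|\le \sum_k \frac{|z|^k}{k!}\#\{\text{walks of length }k+1\text{ from }x\},
\]
and the right-hand side is at most $\langle \mathbf 1, e^{|z|A'}e_x\rangle$. This reintroduces $|t|$ in the exponent, which is too costly.

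Instead, we would handle the unitary part in the manner of Shcherbina--Tirozzi. For a real symmetric $M$, $\norm{e^{(c+it)M}}=\norm{e^{cM}}$, so the oscillation is harmless in operator norm. The issue is only the growth of $e^{cM}$, measured in the local $\ell^2$ norm near $x$. We would bound $|\langle v,e^{zM}e_x\rangle|\le \norm{v}\,\norm{e^{cM}e_x}$. Note that $\norm{e^{cM}e_x}^2=\langle e_x,e^{2cM}e_x\rangle$, and by the walk expansion with positive weights $(2|c|)^k/k!$ this is at most $\sum_k (2|c|)^k W_k(x)/k!$, where $W_k(x)$ is the number of closed walks of length $k$ at $x$. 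Its second moment is bounded uniformly via the Mecke formula and the tree-integration bound of Lemma~\ref{lem:mom2}(i): the number of labelled trees contributes $k^{O(k)}$ growth against $k!^2$ in the denominator, and we must check that this series converges for fixed $c$, by controlling $\E[W_k(x)^2]\le C^k k!^{\beta}$ with $\beta<2$. This combinatorial moment estimate is the step we expect to be the most delicate, since the number of spanning-tree shapes of closed walks grows roughly like $k!$ times $(Cr^d)^k$.

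Combining the pieces, we get $\E|D_xG_n(z)|^2\le C(1+|t|)^2$, hence $\alpha=1<3/2$, and the theorem follows.
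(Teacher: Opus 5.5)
Your reduction is sound and in places leaner than the paper's. Fourier inversion plus Minkowski needs only $\widehat f\in L^1$, which is automatic for $f\in H^2$, so no separate $L^1$-approximation step is needed. The one-sided Duhamel identity $\frac{\mathrm d}{\mathrm ds}\textup{Tr}\,e^{zM_s}=z\,\textup{Tr}[E e^{zM_s}]$, together with $\|e^{zM_s}e_x\|=\|e^{cM_s}e_x\|$ and $0\le M_s\le A'$ entrywise, reduces everything to
\[
\mathbb E\big[\|v\|^2\,\|e^{cM_s}e_x\|^2\big]\le \sum_{k\ge 0}\frac{(2|c|)^k}{k!}\,\mathbb E\big[\deg(x)\,W_k(x)\big].
\]
The paper (Poincar\'e first, two-sided Duhamel) must additionally control $\langle e^{2(1-t)c\overline A_n}\widetilde{\mathbf a},\widetilde{\mathbf a}\rangle$. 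However, everything hinges on this series being bounded uniformly in $n$ and $x$ for \emph{every} fixed $c\neq 0$, and that is exactly the step you leave open, with a target that cannot be met.

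Your criterion $\mathbb E[W_k(x)^2]\le C^k(k!)^\beta$ with $\beta<2$ is false. Let $Z:=\#(\mathcal P_n\cap B(x,r/2))$. On $\{Z\ge 2\}$ the vertex $x$ lies in a clique of size $Z+1$, so $W_k(x)\ge Z^{k-1}/3$. Hence $\mathbb E[W_k(x)^2]\gtrsim \mathbb E[Z^{2k-2}]\ge \exp(2k\log k-2k\log\log k-O(k))$, which is not $O(C^k(k!)^\beta)$ for any $\beta<2$. Your counting heuristic also falls short: tree shapes $\approx k!\,(Cr^d)^k$, or summing over all $p^{p-2}$ labelled trees as in the proof of Lemma~\ref{lem:mom2}(i) (where the dependence on the walk length is never tracked), gives only $\mathbb E[W_k(x)]\le C^k k!$. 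That yields convergence only for $|c|$ below a threshold depending on $r,d$, not for all $c\neq 0$.

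The missing idea is the super-exponential saving $(\log k)^{-k}$ relative to $k!$ that comes from Poisson moments, $\mathbb E[Z^m]\le (m/\log(m/\lambda+1))^m$. This is the paper's Lemma~\ref{key lemma}. One partitions $W_n$ into boxes of side $r$ and dominates the number of walks by $\sum Z_{i_1}\cdots Z_{i_m}$ over chains of adjacent boxes (at most $3^{d(m-1)}$ chains from a fixed starting box). Applying AM--GM then gives $\mathbb E[L_m^{(\ell,x)}]\le C^m m^m/(\log(m/r^d+1))^m$ uniformly in $n$ and $x$. Equivalently, index walks by their first-visit set partitions, counted by Bell numbers $B_k\le (k/\log(k+1))^k$, and integrate via the Mecke formula along the first-visit spanning tree rather than over all labelled trees.

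With this in hand you do not need second moments at all. Appending a neighbour of $x$ to a closed walk is injective, so $\deg(x)W_k(x)$ is at most the number of walks with $k+2$ vertices starting at $x$. The series above is then bounded by $\sum_k\frac{(2|c|)^k}{k!}C^{k+2}(k+2)^{k+2}(\log((k+2)/r^d+1))^{-(k+2)}<\infty$ for every $c$. This yields $\mathbb E|D_xG_n(z)|^2\le C(1+|t|)^2$, i.e.\ $\alpha=1$, and your argument goes through.
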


 Given Proposition \ref{aux prop} and Theorem \ref{main prop}, we conclude the proof of Theorem \ref{thm:fun}.
\begin{proof}[Proof of Theorem \ref{thm:fun}]
We apply Proposition \ref{aux prop}.
For  $c\neq 0$,
define $\mc L_c$ to be a collection of functions $f: \mathbb R \rightarrow \mathbb R$ whose $\norm{\cdot}_{\mc L_c}$-norm, defined below, is finite:
\begin{align}\label{cnorm}
  \norm{f}_{\mc L_c}^2 := \norm {f (x) \textup{sech}(c x)}_2^2 +\norm {f ' (x) \textup{sech}(c x)}_2^2 +\norm {f ''(x) \textup{sech}(c x)}_2^2.
\end{align}
	For   $f \in \mc L_c$, set 
  \begin{align}
    Z_n(f): = \frac1{\sqrt n} (\textup{Tr} [f(A_n)] - \E \textup{Tr} [f(A_n)])=  \frac1{\sqrt n} (L_n(f) - \mathbb E L_n(f) ).
  \end{align}We claim that Proposition~\ref{aux prop} applies, with $a_n =  \frac1{\sqrt n} $,  $\mathcal L = \mathcal L_c$ and $\widetilde{\mathcal L}  = \text{the set of polynomials}$.
First, the set of polynomial functions is dense in $\mc L_c$ with respect to the $\norm{\cdot}_{\mc L_c}$-norm.
Moreover, Theorem~\ref{thm:polt1} states that for any polynomial $p$, the centered statistic $Z_n(p)$ converges in distribution to a Gaussian random variable $\mc N(0,\sigma_p^2)$.
Observe that the mapping $p \mapsto \sigma_p^2$ defines a quadratic functional, since $p \mapsto \frac{1}{n}\textup{Var} (\text{Tr} [p(A_n)])$ is quadratic for every $n$ and $\sigma_p^2$ is defined as its limit as $n\rightarrow \infty$.

Finally, we check the  condition  (ii) in Proposition~\ref{aux prop}. 
Setting $\psi(x):=f(x)\textup{sech}(cx)$, 
\[
\text{Tr}[f(A_n)]
=\text{Tr}[\psi(A_n)\cosh(cA_n)]
=\tfrac12 \text{Tr}[\psi(A_n)e^{cA_n}]
+\tfrac12 \text{Tr}[\psi(A_n)e^{-cA_n}].
\]
Define $X:=\text{Tr}[\psi(A_n)e^{cA_n}]$ and $Y:=\text{Tr}[\psi(A_n)e^{-cA_n}]$.
Then
\[
\frac1n\textup{Var}\big(\text{Tr}[f(A_n)]\big)
=\frac1n\textup{Var}\Big(\frac{X+Y}2\Big)
\le \frac{1}{2n}\big(\textup{Var} X+\textup{Var} Y\big).
\]
Noting that $\psi\in H^2(\mathbb R)$ and  {$\psi(0)=0$ (since $f(0)=0$),}  by Theorem~\ref{main prop} (recall the condition $c\neq 0$),  
\[
\frac1n\textup{Var} X
\le C\big(\|\psi\|_2^2+\|\psi''\|_2^2\big),
\qquad
\frac1n\textup{Var} Y
\le C\big(\|\psi\|_2^2+\|\psi''\|_2^2\big),
\]
so that
\begin{align}\label{eq:varpsi}
\mathbb E [Z_n(f)^2 ]= \textup{Var}  (Z_n(f)) = \frac1n\textup{Var} \big(\text{Tr}[f(A_n)]\big)
\le C\big(\|\psi\|_2^2+\|\psi''\|_2^2\big)\le C\norm{f}^2_{\mc L_c}.
\end{align}
The proof of the last inequality will be  provided below.
Hence, all conditions of Proposition~\ref{aux prop} are satisfied, and the desired conclusion follows.

We now proceed to establish the final inequality in \eqref{eq:varpsi}.
Set $g(x):=\text{sech}(cx)$. 
Since
\[
g'(x)=-c g(x)\tanh(cx), \qquad
g''(x)=c^2 g(x)\big(2\tanh^2(cx)-1\big),
\]
we have $|g'|\le c g$ and $|g''|\le c^2 g$.
Since $\psi=f g$, we have $\psi''=f''g+2f'g'+fg'',$
and therefore
\begin{align}\label{77}
    \|\psi''\|_2
\le \|f''g\|_2 + 2\|f'g'\|_2 + \|fg''\|_2
\le \|f''g\|_2 + 2c \|f'g\|_2 + c^2 \|fg\|_2 \le  C  \|f\|_{\mc L_c}.
\end{align}
As $    \|\psi \|_2 = \|fg\|_2 \le  \|f\|_{\mc L_c}$, we are done.

\end{proof}

From now on, we aim to establish Theorem \ref{main prop}.

\subsection{Proof of Theorem \ref{main prop}}
\label{ss:mp}
  
To prove Theorem \ref{main prop}, we need the following bound on the number of paths.
\begin{lemma}\label{key lemma}
  For $m\in \bN,$ let $L_m$ be the number of paths of length $m$ in \textup{RGG($r$)} on the point process $\PP_n$. Also for any integer $1 \le \ell \le m$ and any point $x$, 
let $L_m^{(\ell,x)}$ denote the number of paths of length $m$ 
in \textup{RGG($r$)} on the point process $\PP_n \cup \{x\}$ whose $\ell$-th vertex is $x$.

    There exists a constant $C>0$ (depending only on $r$ and $d$) such that for any $m\in \bN,$
  \begin{align*}  
\sup_{n \ge 1}\sup_{1\le \ell \le m} \sup_{x \in W_n}\Big(\Big (\frac1 n \mathbb E  L_m\Big ) \vee \mathbb E L_m^{(\ell,x)}\Big)   \le C^m \frac{m^m}{(\log (m/r^d +1))^m}.
  \end{align*}  

\end{lemma}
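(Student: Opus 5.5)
The plan is to reduce the count of length-$m$ paths to a Touchard (Poisson-moment) polynomial, via the multivariate Mecke formula and the spanning-tree domination already used in Lemma \ref{lem:mom2}(i). Then a standard Chernoff-type bound on Poisson moments should produce the factor $m^m/(\log(m/r^d+1))^m$.

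\emph{Step 1: combinatorial decomposition.} I read a path of length $m$ as a sequence $(X_0,\dots,X_m)$ of points, not necessarily distinct, with $X_{i-1}\sim X_i$ for all $i$; this is the object counted by traces of powers of $A_n$. Group these sequences according to the set partition of the positions $\{0,\dots,m\}$ induced by equality of the points.
\begin{itemize}
\item For a partition into $p$ blocks, the points form a $p$-tuple in $\PP_n^{(p)}$, and the product of adjacency indicators along the walk is dominated by the product over the edges of any spanning tree of the induced connected multigraph on the $p$ blocks, exactly as in \eqref{502}.
\item By Lemma \ref{multi}, the expectation of the sum over $\PP_n^{(p)}$ becomes $\int_{W_n^p}\prod_{\{i,j\}\in E(\mc T)}\one\{|y_i-y_j|\le r\}\,\d y$. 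Since edges of an RGG are determined by the two endpoints only, there is no residual dependence on $\PP_n$.
\item Root the tree, integrate out the leaves one at a time as in the proof of Lemma \ref{lem:mom2}(i), and integrate the root over $W_n$. Each of the $p-1$ edges contributes at most $\lambda:=\kappa_d r^d$, and the root contributes $|W_n|=n$.
\end{itemize}
Summing over partitions gives
\[
\frac1n\E L_m\le \sum_{p=1}^{m+1} S(m+1,p)\,\lambda^{p-1}=\frac{\E[Y^{m+1}]}{\lambda},\qquad Y\sim\mathrm{Poisson}(\lambda),
\]
where $S(\cdot,\cdot)$ are Stirling numbers of the second kind.

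\emph{Step 2: the pinned count $L_m^{(\ell,x)}$.} The same decomposition applies with one change. The block containing position $\ell$ is the deterministic point $x$: it becomes the root and is not integrated, while the other blocks are integrated via Mecke on $\PP_n$. Since the root is not integrated, the factor $n$ disappears. There are at most $S(m+1,p)$ such partitions with $p-1$ integrated blocks, each contributing at most $\lambda^{p-1}$. Hence $\E L_m^{(\ell,x)}$ is bounded by the same quantity $\sum_p S(m+1,p)\lambda^{p-1}$, uniformly in $n$, $\ell$ and $x$.

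\emph{Step 3: Poisson moment bound (main step).} Set $k=m+1$. For $t>0$, combining $y^k\le (k/(et))^k e^{ty}$ with $\E e^{tY}=e^{\lambda(e^t-1)}$ gives
\[
\E Y^k\le \Big(\frac{k}{et}\Big)^k e^{\lambda(e^t-1)}.
\]
Choose $t=\log(1+k/\lambda)$. Then $\lambda(e^t-1)=k$, the factors $e^{-k}$ and $e^{k}$ cancel, and
\[
\E Y^k\le \Big(\frac{k}{\log(1+k/\lambda)}\Big)^k.
\]
It remains to pass from $k=m+1$ to $m$ and from $\log(1+k/\lambda)$ to $\log(m/r^d+1)$, and to absorb the division by $\lambda$ into $C^m$.
\begin{itemize}
\item The ratio of the two logarithms is bounded above and below by constants depending on $d$ and $r$.
\item The extra factor $k/\log(1+k/\lambda)$ is at most $C^m$.
\item $1/\lambda$ is a constant depending on $r,d$, which the statement permits.
\end{itemize}

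The step I expect to need the most care is this bookkeeping of constants and the counting in Step 1. In particular, one must make sure that the walk-to-partition map does not overcount beyond $S(m+1,p)$ for each $p$. This should hold because, once the partition is fixed, the tuple in $\PP_n^{(p)}$ is indexed by the blocks in a canonical order, so no $1/p!$ correction is needed.
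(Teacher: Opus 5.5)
Your proof is correct, but it takes a genuinely different route from the paper's.

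\textbf{The paper's route.} The paper does not use the Mecke formula for this lemma. It tiles $W_n$ by boxes of side $r$ and uses the deterministic domination $L_m\le\sum Z_{i_1}\cdots Z_{i_m}$ over chains of neighbouring boxes. The AM--GM inequality then reduces each product to single-box moments $\mathbb{E} Z^m$ with $Z\sim\mathrm{Poisson}(r^d)$. It counts at most $(n/r^d)\,3^{d(m-1)}$ chains, or $3^{d(m-1)}$ when the box of $x$ is pinned, with $Z+1$ in place of $Z$. Finally it quotes the bound $\mathbb{E} Z^m\le (m/\log(m/\lambda+1))^m$ without proof.

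\textbf{Your route.} Your coincidence-pattern decomposition, combined with tree domination and Mecke, gives the exact Touchard-type bound
\[
\tfrac1n\mathbb{E} L_m,\ \mathbb{E} L_m^{(\ell,x)}\le \mathbb{E}[Y^{m+1}]/\lambda, \qquad Y\sim\mathrm{Poisson}(\kappa_d r^d).
\]
This buys several things:
\begin{itemize}
\item There is no discretisation, so no integrality or boundary issues with the boxes.
\item There is no $3^{dm}$ loss.
\item An arbitrary pinned position $\ell$ is handled directly; the paper only discusses the initial vertex.
\item Your Chernoff argument gives a self-contained proof of the Poisson moment inequality that the paper takes for granted.
\end{itemize}

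\textbf{Bookkeeping.} The constant bookkeeping goes through. One has
\[
\log(1+(m+1)/\lambda)\ge\min(1,\kappa_d^{-1})\log(1+m/r^d)
\]
by concavity of the logarithm, and $k/\log(1+k/\lambda)\le\lambda+k$ because $\log(1+u)\ge u/(1+u)$. Your reading of ``length $m$'' as $m$ steps, whereas the paper's box count implicitly uses $m$ vertices, only shifts $m$ by one, which $C^m$ absorbs.

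\textbf{What the paper's route buys.} In exchange, the paper's argument is more robust. AM--GM needs only the marginal law of a single box count, so it uses neither independence nor the Mecke formula. It would therefore carry over essentially verbatim to other stationary point processes with suitable single-box moment bounds. Your argument is tied to the Poisson factorial moment structure.
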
 

\begin{proof}

  Let us prove the bound for $\frac1 n \mathbb E  L_m$ first.
We partition the domain $W_n$ into $n$ closed  boxes $B_1,\dots,B_{n/r^d}$ of side-length $r$ (we assume that $n/r^d$ is an integer), whose interiors are disjoint but which may intersect along their boundaries. Let $Z_i$ be the number of points in $\PP_n$ that lie in $B_i$. Then, we have a deterministic inequality
\begin{align} \label{338}
  L_m \le \sum_{\substack{i_1,\dots,i_m \in \{1,2,\dots,n/r^d\} \\
  B_{i_{j}}\cap B_{i_{j+1}} \ne \es \ \forall j=1,\dots,m-1 }} Z_{i_1}Z_{i_2}\dots Z_{i_m}.
\end{align}
This is because any path $(v_1,\dots,v_m)$ satisfies $d(v_i,v_{i+1}) \le r$,    implying that the boxes which contains $v_j$ and $v_{j+1}$ respectively are either adjacent or identical.
By the arithmetic-geometric mean inequality,
\begin{align*}
  Z_{i_1}Z_{i_2}\dots Z_{i_m} \le \frac1m ( Z_{i_1}^m + \dots + Z_{i_m}^m).
\end{align*} 
Note that the moments of the Poisson distribution $Z$ (with parameter $\lambda$) satisfies
\begin{align}
  \mathbb E [Z^m]\le \Big(\frac m{\log (m/\lambda + 1)}\Big)^m ,\qquad \forall m\in \bN,
\end{align}
Since $Z_1,\dots,Z_n$ have the Poisson distribution with parameter $\lambda=r^d$ and the number of admissible index tuples $(i_1,\dots,i_m)$ in the summation \eqref{338} 
is bounded by $\frac{n}{r^d}\cdot (3^d)^{m-1}$, 
\begin{align*}
    \mathbb E L_m \le \frac{n}{r^d} \cdot (3^d)^{m-1} \Big(\frac m{\log (m/r^d + 1)}\Big)^m.
\end{align*}
Next, the bound for $\mathbb E L_m^{(\ell,x)}$ follows by an argument entirely analogous to the one above. 
The only difference is that the initial vertex of each path is now fixed at $x$. 
Consequently, the number of admissible index tuples $(i_1,\dots,i_m)$ in the summation \eqref{338} 
is bounded by $(3^d)^{m-1}$ rather than $\frac{n}{r^d}\cdot (3^d)^{m-1}$. 
Moreover, since the number of points in $\PP_n \cup \{x\}$ contained in a  box $B_i$ 
is at most $Z_i+1$, noting that  a Poisson distribution $Z$ (with parameter $\lambda $) satisfies 
\begin{align}
  \mathbb{E}\big[(Z+1)^m\big]
   \le 
  \Big(\frac{Cm}{\log(m/\lambda +1)}\Big)^m,
  \qquad \forall  m\in\mathbb{N}.
\end{align}
we obtain the desired result.

\end{proof}

Given Lemma \ref{key lemma}, we establish Theorem 
 \ref{main prop}.
\begin{proof}[Proof of Theorem \ref{main prop}]

Let $N = N_n$ be the number of points in the Poisson point process $\PP_n$. Let us enumerate the points in $\PP_n$ as $X_1,\cdots,X_N.$
The proof consists of the following several steps. 

\textbf{Step 1. Application of Poincar\'e inequality.}
For $x \in W_n$,
let $A^{(x)}_n$ be the adjacency matrix of the graph obtained by adjoining the new point 
$x$ to the Poisson point process $\PP_n$: 
\[
A^{(x)}_n :=
\left(
\begin{array}{c|c}
  0 & \mathbf{a} \\ \hline
  \mathbf{a}^T & \begin{array}{ccc}
      & & \\
      & A _n & \\
      & &
  \end{array}
\end{array}
\right),
\] 
where $  {\textbf{a}} = (  a _1,\dots,  a _{N}) $ is such that $  a_i=1$ if $x$ is connected to $X_i$, and $  a_i=0$ if $x$ is not connected to $X_i$.
In order to match the size of the matrix 
$A$   with that of $A^{(x)}_n$, we enlarge $A$ by adjoining a zero row and column as follows:
\[
  \overline A_n :=
\left(
\begin{array}{c|c}
  0 & \mathbf{0} \\ \hline
  \mathbf{0}^T & \begin{array}{ccc}
      & & \\
      & A_n & \\
      & &
  \end{array}
\end{array}
\right).
\]
Then, by 
Poincar\'e inequality (Lemma \ref{poincare}), for any  function $f: \mathbb R \rightarrow \mathbb R$ {such that $f(0)=0$}, 
\begin{align} \label{320}
  \textup{Var} \Big( \frac1{\sqrt n} \textup{Tr} [f(A_n) e^{cA_n}] \Big )&\le \frac1 n \int_{W_n} \mathbb E \big \vert  \textup{Tr} [f(  A^{(x)}_n )e^{c  A^{(x)}_n}  ] - \textup{Tr}[ f( A_n) e^{c A _n} ]\big \vert ^2\d x \nonumber \\
  &= \frac1 n \int_{W_n} \mathbb E \big \vert \textup{Tr} [f(  A^{(x)}_n) e^{c  A^{(x)}_n}  - f(\overline A_n) e^{c\overline A_n } ]  \big \vert ^2 \d x,
\end{align}
where in the last identity, we used the fact that eigenvalues of $f(\overline A_n) e^{c\overline A_n }$ are those of $f( A_n) e^{c A _n}$ together with $f(0)e^0=0$. Assume for the moment that \(f\in L^{1}(\R)\), and let \(\widehat f\) denote the Fourier transform of \(f\). Then, by the Fourier inversion formula:
\begin{align}
  f(M) = \int_\mathbb R \widehat{f}(\xi) e^{i\xi M}\d \xi, \qquad \forall \text{ symmetric matrix } M,
\end{align}
we have 
\begin{align} \label{321}
  \textup{Tr} [f( A^{(x)}_n) e^{c A^{(x)}_n}  - f(\overline A_n) e^{c\overline A_n} ] =  \int_\R \widehat{f}(\xi) \textup{Tr} [e^{(i \xi + c)A^{(x)}_n} - e^{(i \xi + c) \overline A_n}] \d \xi.
\end{align}
To control the integrand, we use the following Duhamel's formula from \cite[Chapter VIII.8]{rs72}:
Let $(A(t))_t$ be a matrix–valued function of $t \in \mathbb R$ such that each matrix element is smooth in $t$. Then,
\begin{align}
  \frac{d}{dt}e^{A(t)} = \int_0^1 e^{s A(t)}A'(t)e^{(1-s)A(t)}\d s .
\end{align}
Using this for $A_n(t) := t(i\xi + c)A^{(x)}_n + (1 - t) (i\xi +c)\overline A_n$, one can write 
\begin{align} \label{322}
    \textup{Tr} \big[e^{(i\xi+c)A^{(x)}_n}-e^{(i\xi+c)\overline A_n}\big]
&=\int_0^1     \textup{Tr} \Big(\frac{\mathrm d}{\mathrm dt}e^{A_n(t)}\Big) \mathrm dt \nonumber  \\
&=\int_0^1 \int_0^1     \textup{Tr} \big(e^{sA_n(t)}A_n'(t)e^{(1-s)A_n(t)}\big) \mathrm ds \mathrm dt \nonumber  \\
&=\int_0^1 \int_0^1     \textup{Tr} \big(A_n'(t)e^{(1-s)A_n(t)}e^{sA_n(t)}\big) \mathrm ds \mathrm dt \nonumber \\
&=\int_0^1     \textup{Tr} \big(A_n'(t)e^{A_n(t)}\big) \mathrm dt \nonumber  \\
&=(i\xi+c)\int_0^1     \textup{Tr} \Big[(A^{(x)}_n-\overline A_n) 
e^{(i\xi+c)\big((1-t)\overline A_n+tA^{(x)}_n\big)}\Big] \mathrm dt \nonumber \\
 &=(i\xi+c)  \int_0^1  \textup{Tr} [e^{t(i \xi + c)A^{(x)}_n} (A^{(x)}_n-\overline A_n) e^{(1-t)(i \xi + c)\overline A_n}]\d t .
\end{align}
Let $\widetilde{\textbf{a}} = (\widetilde a_1,\dots,\widetilde a_{N+1}) := (0, \textbf{a}) $ be the (first) row vector of $A^{(x)}_n$.
Noting that $A^{(x)}_n$ is symmetric, defining the $(N+1)$-dimensional vector $\textbf{e}_1 := (1,0,\cdots,0)^\top$, we obtain that 
\begin{align*}
  \big\vert \textup{Tr}& [e^{t(i \xi + c)A^{(x)}_n} (A^{(x)}_n-\overline A_n) e^{(1-t)(i \xi + c)\overline A_n}] \big\vert \\
  &= \Big\vert \sum_{j,k=1}^{N+1} ( e^{t(i \xi + c)A^{(x)}_n})_{j1}\widetilde{a}_k (e^{(1-t)(i \xi + c)\overline A_n})_{kj} +\sum_{j,k=1}^{N+1} ( e^{t(i \xi + c)A^{(x)}_n})_{jk}\widetilde{a}_k (e^{(1-t)(i \xi + c)\overline A_n})_{1j}\Big\vert  \\
  &= \Big\vert \sum_{j=1}^{N+1} (e^{t(i \xi + c)A^{(x)}_n} \textbf{e}_1)_j (\widetilde{\textbf{a}} \ e^{(1-t)(i \xi + c)\overline A_n} )_j +\sum_{j=1}^{N+1} (e^{t(i \xi + c)A^{(x)}_n} \widetilde{\textbf{a}}^\top)_j (e^{(1-t)(i \xi + c)\overline A_n}\textbf{e}_1)_j \Big\vert 
 \\
  &\le \norm{e^{t(i \xi + c)A^{(x)}_n} \textbf{e}_1}_2 \norm{\widetilde{\textbf{a}} \ e^{(1-t)(i \xi + c)\overline A_n}  }_2 + \norm{e^{t(i \xi + c)A^{(x)}_n} \widetilde{\textbf{a}}^\top}_2 \norm{e^{(1-t)(i \xi + c)\overline A_n} \textbf{e}_1}_2 \\
   &= \norm{e^{tcA^{(x)}_n} \textbf{e}_1}_2 \norm{\widetilde{\textbf{a}} \ e^{(1-t)c\overline A_n}  }_2 + \norm{e^{tcA^{(x)}_n} \widetilde{\textbf{a}}^\top}_2 \norm{e^{(1-t)c\overline A_n} \textbf{e}_1}_2 \\
	& = \sqrt{\langle e^{2tcA^{(x)}_n} \textbf{e}_1,\textbf{e}_1 \rangle } \sqrt{\langle e^{2(1-t)c\overline A_n}\widetilde{\textbf{a}}^\top,\widetilde{\textbf{a}}^\top\rangle} + \sqrt{\langle e^{2tcA^{(x)}_n} \widetilde{\textbf{a}}^\top,\widetilde{\textbf{a}}^\top \rangle},
\end{align*}
where in the last equality we used the fact that   $\norm{M \textbf{v}}_2 = \langle M^2 \textbf{v},\textbf{v} \rangle^{1/2}$ for any symmetric matrix $M$ and a vector $\textbf{v}$, along with $e^{(1-t)c\overline A_n} \textbf{e}_1=\textbf{e}_1.$ From this point onward, to simplify notation, we suppress the transpose and write \(\widetilde{\textbf{a}}\) in place of \(\widetilde{\textbf{a}}^{\top}\), with a slight abuse of notation.
Plugging the above bound along with \eqref{322} into \eqref{321},
\begin{align} \label{323}
	\mathbb E & \big \vert  \textup{Tr} [f(  A^{(x)}_n  ) e^{c  A^{(x)}_n}  - f(\overline A_n) e^{c\overline A_n} ]\big \vert ^2 \nonumber \\
	& \le C\mathbb E \Big \vert  \int_\R |\widehat{f}(\xi)| |i\xi +c| \d \xi \cdot  \int_0^1 \big( \sqrt{\langle e^{2tcA^{(x)}_n} \textbf{e}_1,\textbf{e}_1 \rangle } \sqrt{\langle e^{2(1-t)c\overline A_n}\widetilde{\textbf{a}},\widetilde{\textbf{a}}\rangle } +  \sqrt{\langle e^{2tcA^{(x)}_n} \widetilde{\textbf{a}},\widetilde{\textbf{a}} \rangle} \  \big) \d t \Big \vert  ^2\nonumber \\
  &\le C \Big( \int_\R |\widehat{f}(\xi)| |i\xi +c| \d \xi \Big )^2 \cdot  \int_0^1 \mathbb E [ \langle e^{2tcA^{(x)}_n} \textbf{e}_1,\textbf{e}_1 \rangle  \langle e^{2(1-t)c\overline A_n}\widetilde{\textbf{a}},\widetilde{\textbf{a}}\rangle + \langle e^{2tcA^{(x)}_n} \widetilde{\textbf{a}},\widetilde{\textbf{a}} \rangle ] \d t.
\end{align}
 By H\"older's inequality, the first term in the above integral (in $t$) is bounded as
\begin{align} \label{333}
	\mathbb E [ \langle e^{2tcA^{(x)}_n} \textbf{e}_1,\textbf{e}_1 \rangle  \langle e^{2(1-t)c\overline A_n}\widetilde{\textbf{a}},\widetilde{\textbf{a}}\rangle ] &\le \sqrt{  \mathbb E \langle e^{2tcA^{(x)}_n} \textbf{e}_1,\textbf{e}_1 \rangle ^2 } \sqrt{  \mathbb E \langle e^{2(1-t)c \overline A_n^{}}\widetilde{\textbf{a}},\widetilde{\textbf{a}}\rangle ^2 } \nonumber \\
	&\le \sqrt{  \mathbb E \langle e^{2tcA^{(x)}_n} \textbf{e}_1,\textbf{e}_1 \rangle ^2 } \sqrt{  \mathbb E \langle e^{2(1-t)c A^{(x)}_n }\widetilde{\textbf{a}},\widetilde{\textbf{a}}\rangle ^2 } ,
\end{align}
where in the last inequality, we used the fact that $A^{(x)}_n \ge \overline A_n$ entrywise, and that all entries of these matrices, as well as those of 
$\widetilde{\textbf{a}}$, are nonnegative.  
\medskip

\noindent\textbf{Step 2. Bound on $\mathbb E\big[ \langle e^{2tcA^{(x)}_n} \textbf{e}_1,\textbf{e}_1 \rangle ^2\big] $ for $t  \in [0,1].$}
We use the Cauchy-Schwarz inequality to deduce that for any  real symmetric matrix $M$ and a vector $\textbf{v},$
\begin{align}\label{basic2}
  \langle M\textbf{v},\textbf{v} \rangle^2 \le \langle M^2\textbf{v},\textbf{v}\rangle \langle \textbf{v},\textbf{v}\rangle.
\end{align}
As a consequence, 
$
	\mathbb E \big[\langle e^{2tcA^{(x)}_n} \textbf{e}_1,\textbf{e}_1 \rangle ^2\big] \le \mathbb E\big[ \langle e^{4tcA^{(x)}_n} \textbf{e}_1,\textbf{e}_1 \rangle\big] .
	$
Note that $\langle e^{4tcA^{(x)}_n} \textbf{e}_1,\textbf{e}_1 \rangle$ is just a left-top entry of the matrix $e^{4tcA^{(x)}_n}.$ Using Taylor's expansion for this entry and then applying Lemma \ref{key lemma}, for $t\in [0,1]$,  
 \begin{align} 
   \mathbb E \langle e^{4tcA^{(x)}_n} \textbf{e}_1,\textbf{e}_1 \rangle &\le 1+ \sum_{m=1}^\infty  \frac1{m!} |4tc|^m \cdot    \mathbb E  L_m^{(1,x)}  \nonumber 
   \le 1+ \sum_{m=1}^\infty  \frac1{m!} |4tc|^m \cdot C^m \frac{m^m}{(\log (m/r^d+1))^m}\le C,
 \end{align}
 where we used Stirling's approximation $m! \approx \sqrt{2\pi m} (\frac m{e})^m $ in the last inequality. Hence, 
 \begin{align} \label{341}
   \mathbb E\big[ \langle e^{2tcA^{(x)}_n} \textbf{e}_1,\textbf{e}_1 \rangle ^2\big] \le C.
 \end{align}

\medskip

\noindent \textbf{Step 3. Bound on $\mathbb E\big[ \langle e^{2t'c A^{(x)}_n}\widetilde{\textbf{a}},\widetilde{\textbf{a}}\rangle^2\big]$ for $t' \in [0,1].$}
By the inequality \eqref{basic2},  
\begin{align} \label{335}
  \mathbb E\big[ \langle e^{2t'c A^{(x)}_n }\widetilde{\textbf{a}},\widetilde{\textbf{a}}\rangle ^2\big] \le  \mathbb E [ \langle e^{4 t'c A^{(x)}_n }\widetilde{\textbf{a}},\widetilde{\textbf{a}}\rangle \langle \widetilde{\textbf{a}},\widetilde{\textbf{a}}\rangle ].
\end{align}
As the entries of $A^{(x)}_n$ are \emph{not} independent because of the nature of spatial networks, one cannot decouple $\widetilde{\textbf{a}}$ from $A^{(x)}_n$ as in the Erd\H{o}s-R\'enyi case. Instead, we directly control the quantity in \eqref{335}, with the aid of Lemma \ref{key lemma}. We write $\widehat A_n^{(x)} = (\widehat a_{ij}) := e^{4 t' c   A_n^{(x)} }$. For a non-negative integer $m $, denoting by $L_{m,ij}^{(x)}$ the number of paths of length $m$ from $i$ to $j$ consisting of the points in $\PP_n \cup \{x\}$ (we set $L_{0,ij}^{(x)} := \delta_{ij}$), we have
\begin{align} \label{336}
   \widehat a_{ij} = \sum_{m=0}^\infty \frac{(4t'c )^m L_{m,ij}^{(x)}}{m!}.
\end{align} 
Now, we bound the RHS of \eqref{335}. Note that 
$
  \langle e^{4t' c A^{(x)}_n }\widetilde{\textbf{a}},\widetilde{\textbf{a}}\rangle  = \sum_{i,j=1}^{N+1} \widehat a_{ij} \widetilde a_{i}\widetilde a_{j}
$ 
and
$  \langle \widetilde{\textbf{a}},\widetilde{\textbf{a}}\rangle = \sum_{k=1}^{N+1} \widetilde{a}_k^2 = \sum_{k=1}^{N+1} \widetilde{a}_k,$   since every $\widetilde{a}_k$ is either 0 or 1.
Thus, the RHS of \eqref{335} is written as 
\begin{align} \label{337}
  \mathbb E \Big[\Big(\sum_{i,j=1}^{N+1} \widehat a_{ij} \widetilde a_{i}\widetilde a_{j} \Big)\Big(\sum_{k=1}^{N+1} \widetilde{a}_k \Big)\Big] = \mathbb E \Big[ \sum_{i,j,k=1}^{N+1} \widehat a_{ij}\widetilde a_{i}\widetilde a_{j} \widetilde{a}_k\Big].
\end{align}
 Note that 
\begin{align*}
  \Big\vert \sum_{i,j,k=1}^{N+1} \widehat a_{ij} \widetilde a_{ i} \widetilde a_{ j} \widetilde a_k   \Big\vert &\overset{\eqref{336}}{\le}  \sum_{m=0}^\infty \frac{|4t'c|^m}{m!} \sum_{i,j,k=1}^{N+1} L_{m,ij}^{(x)} \widetilde a _{i}\widetilde a_{j} \widetilde a_k  
  \le  \sum_{m=0}^\infty \frac{|4t'c|^m }{m!} \sum_{i,j,k=1}^{N+1} \widetilde a_k  \widetilde a_j L_{m,ji}^{(x)} , 
\end{align*}
where we used $L_{m,ij}^{(x)} =L_{m,ji}^{(x)} $ and $\widetilde a_{i} \le 1.$ Observe that $\sum_{i,j,k=1}^{N+1} \widetilde a_k  \widetilde a_j L_{m,ji}^{(x)} $ counts the number of paths of length $m+2$ whose vertices belong to 
$\PP_n \cup \{x\}$ and whose second vertex is $x$. 
This is because $\widetilde a_k=\widetilde a_j=1$ implies the existence of a 2-path passing through $x$.
 Thus, by Lemma \ref{key lemma},
\begin{align*}
 \mathbb E \Big[ \sum_{i,j,k=1}^{N+1} \widetilde a_k  \widetilde a_j L_{m,ji}^{(x)} \Big] \le C^{m+2} \frac{(m+2)^{m+2}}{(\log (m/r^d+3))^{m+2}}.
\end{align*}
Hence, by the above computations, for $t' \in [0,1],$ one can control \eqref{335} as
\begin{align} \label{344}
	\mathbb E \big[\langle e^{2t'c A^{(x)}_n }\widetilde{\textbf{a}},\widetilde{\textbf{a}}\rangle ^2\big]&\le  	\mathbb E  \Big[\sum_{m=0}^\infty \frac{|4t'c|^m }{m!} \sum_{i,j,k=1}^{N+1} \widetilde a_k   \widetilde  a_j L_{m,ji}^{(x)} \Big] 
    \le   \sum_{m=0}^\infty \frac{|4t'c |^m }{m!} \cdot C^{m+2} \frac{(m+2)^{m+2}}{(\log (m/r^d+3))^{m+2}} \le C.
\end{align}
\medskip

\noindent\textbf{Step 4. Derivation of \eqref{theorem9} under the additional condition $f\in L^1$.}
Plugging \eqref{341} and \eqref{344} into \eqref{333} and \eqref{323}, gives that for any  $x \in W_n$,
\begin{align*}
 \mathbb E \big[ |\textup{Tr} [f(  A^{(x)}_n e^{c  A^{(x)}_n}  - f(\overline A_n) e^{c\overline A_n} ]|^2 \big] \le C  \Big( \int_\R |\widehat{f}(\xi)| |i\xi +c| \d \xi \Big )^2 \le C  \Big( \int_\R |\widehat{f}(\xi)| (|\xi| + |c|) \d \xi \Big )^2 ,
\end{align*}
where the constant $C>0$ is independent of $x \in W_n$.
Applying this to \eqref{320}, noting that $|W_n|=n,$ 
\begin{align*}
   \text{Var} \Big( \frac1{\sqrt n} \textup{Tr} [f( A_n) e^{c A_n} ]\Big ) \le C \Big( \int_\R |\widehat{f}(\xi)| (|\xi| + |c|) \d \xi \Big )^2 & \le C \Big ( \int_\R | \widehat{f}(\xi) |^2 (|\xi| + |c|)^4 \d \xi  \Big)\Big( \int_\R (|\xi| + |c|)^{-2} \d \xi \Big) \\
   &\le C ( \norm{f}^2_2 + \norm{f''}^2_2).
\end{align*}
Here in the last inequality, we used the condition $c \neq 0$ so that the last integral is finite.

~

\noindent\textbf{Step 5. Removal of the condition $f\in L^1$.} 
Define the events 
\begin{align*}
    \mathcal E_n(0):= \{N_n \le n\}, \quad \mathcal E_n(\ell):= \{ e^{\ell-1} n < N_n \le e^{\ell}n\}\quad \text{for $\ell \in \mathbb N.$}
\end{align*}
Then, by a tail bound for the Poisson distribution, for large enough $n$,
\begin{align}\label{ptail}
    \mathbb P (\mathcal E_n(\ell )  ) \le \frac{(en)^{e^{\ell-1}n} e^{-n}}{(e^{\ell-1}n)^{e^{\ell-1}n}} = e^{-(\ell-2) e^{\ell-1} n - n} \quad \text{for $\ell \in \mathbb N.$}
\end{align}
Let $f\in H^2(\mathbb R)$ with $f(0)=0$ be arbitrary. By Sobolev embedding theorem, $\sup_{x \in \mathbb R} |f(x)| \le C_0$ for some $C_0<\infty.$ Take an approximation    $\{f_k\}_{k \ge 1}$ in $  C_c^\infty (\mathbb R) $ with $f_k(0)=0$ and $ \sup_{x \in \mathbb R} | f_k(x)|\le 2C_0$  such that 
\begin{equation}\label{eq:H2+unif}
f_k \to f \text{ in }H^2(\mathbb R) \qquad \text{as $k\rightarrow \infty.$}
\end{equation}Such a sequence  is constructed by first obtaining a suitable $C_c^\infty$ approximation $g_k$ (e.g., through mollification and cutoff), and then applying a correction, $f_k(x): = g_k(x) - g_k(0)\psi(x)$, where $\psi$ is a $C_c^\infty$ function satisfying $\psi(0)=1$.
By the spectral theorem,
\begin{align} \label{612}
 \big\vert \mathbb E  \big( \text{Tr} [f_k(A_n)   e^{cA_n}]\big) ^2  &- \mathbb E \big( \text{Tr} [f (A_n)   e^{cA_n}]\big) ^2 \big\vert  \nonumber \\
&= \Big\vert \mathbb E \Big( \sum_{i=1}^{N_n}  f_k(\lambda_i) e^{c\lambda_i}\Big)^2  - \mathbb E \Big( \sum_{i=1}^{N_n}  f (\lambda_i) e^{c\lambda_i}\Big)^2 \Big\vert  \nonumber  \\
&\le \sum_{\ell=0}^{\infty}\mathbb E  \Big [ \Big\vert  \Big( \sum_{i=1}^{N_n}  f_k(\lambda_i) e^{c\lambda_i}\Big)^2    - \Big( \sum_{i=1}^{N_n}  f (\lambda_i) e^{c\lambda_i}\Big)^2  \Big\vert  ;  \mathcal E_n(\ell) \Big]. 
\end{align}   
Since every eigenvalue of $A_n$  is bounded by the maximum degree,    $\sup_{1\le i\le N_n} \lambda_i \le N_n \le e^\ell n$    under the event $\mathcal E_n(\ell ).$ Hence, the above quantity is bounded by
\begin{align*}
   \sum_{\ell=0}^{\infty}  \mathbb E \Big[\sum_{i=1}^{N_n}  e^{2c\lambda_i} |f_k(\lambda_i)  - f
(\lambda_i) | &|f_k(\lambda_i) +f(\lambda_i) | ; \mathcal E_n(\ell )   \Big]  \\
&\le  4C_0     \sum_{\ell=0}^{\infty}   \mathbb E \Big [ e^\ell n \cdot e^{2ce^\ell n }  |f_k(\lambda_i)  - f
(\lambda_i) | ; \mathcal E_n(\ell ) \Big ] \\
&\overset{\eqref{ptail}}{\le }  4C_0    \sup_{x\in \mathbb R} |f_k(x)  - f
(x) |\cdot \Big[ ne^{2cn} +  \sum_{\ell=1}^{\infty}  e^\ell n \cdot e^{2ce^\ell n }  \cdot    e^{-(\ell-2) e^{\ell-1} n - n}  \Big] \\
&\le C(n)\cdot  C_0    \sup_{x\in \mathbb R} |f_k(x)  - f
(x) |
\end{align*}
for some  $C(n)>0$. Given (fixed) $n$,
the above term converges to 0 as  $k\rightarrow \infty$, since the $H^2 (\mathbb R)$ convergence \eqref{eq:H2+unif} particularly implies a uniform convergence on $\mathbb R.$
Hence,  we deduce that for any (fixed) $n$,
as $k\rightarrow \infty,$
\begin{align*}
\mathbb E   \big( \text{Tr} [f_k(A_n)   e^{cA_n}]\big) ^2   \rightarrow  \mathbb E \big( \text{Tr} [f (A_n)   e^{cA_n}]\big) ^2 .
\end{align*}
By the same reasoning, as $k\rightarrow \infty,$
\begin{align*}
\mathbb E \text{Tr}[f_k(A_n)e^{cA_n}] \rightarrow   \mathbb E \text{Tr}[f(A_n)e^{cA_n}].
\end{align*}
Therefore, for any (fixed) $n$,  as $k\rightarrow \infty,$
\begin{equation}\label{eq:var-conv}
\text{Var} \big(\text{Tr}[f_k(A_n)e^{cA_n}]\big) \rightarrow  \text{Var}  \big(\text{Tr}[f(A_n)e^{cA_n}]\big).
\end{equation}
Applying the bound \eqref{theorem9} to each $f_k$ (which lies in $C_c^\infty \subseteq L^1\cap H^2$ and satisfies $f_k(0)=0$),  
\[
\frac1n  \text{Var}  \big(\text{Tr}[f_k(A_n)e^{cA_n}]\big)
 \le  C \big(\|f_k\|_2^2 +\|f_k''\|_2^2\big).
\]
By \eqref{eq:var-conv}, sending $k\to\infty$, we deduce that
the desired bound holds for any $f\in H^2$ with $f(0)=0$, completing the proof.
  
\end{proof}

%
%
\section{Other types of random spatial networks}
\label{sec:ex}

In this section, we present further examples of random spatial networks for which CLT for the spectral measure can be established. 
We consider the stabilizing networks, which encompass a broad family of network models that are of interest in computational geometry such as \(k\)-nearest neighbor graphs and relative neighborhood graphs (see \cite{stab2} for a comprehensive overview). 
The defining feature of stabilizing networks is that their local structure is determined by the spatial arrangement of points and their geometric relationships, rather than simple distance-based edge rules as in RGGs. Central to their analysis is the notion of a  \emph{stabilization radius}: a random, point-dependent radius beyond which changes in the configuration do not affect the local graph structure (see \cite{mal_stab,stab1,stab11,bern,stab2} for the examples).

We now give the precise definition of the   stabilizing networks.  
Throughout the paper, let $\textbf{N} $ be the set of counting measures on $\mathbb R^d$ that are simple  and locally finite.  
The {\emph{graph neighborhood}, a (possibly empty) set of points connected to a point by an edge, is a measurable map 	$\mathsf N : \mathbb R^d \times \textbf N \to \textbf N $ with the property that 
for any $\mc Q \in \textbf  N $, $\textsf{x,y}\in \mc Q$ and $z\in \mathbb R^d,$ 
\begin{enumerate}
  \item  $\mathsf N (\textsf x, \mc Q)\su \mc Q$;
  \item  $\textsf y\in \mathsf N (\textsf x, \mc Q) $ if and only if $\textsf x\in \mathsf N (\textsf y, \mc Q) $;
  \item Translation invariance: $\mathsf N (\textsf x + z , \mc Q + z) = \mathsf N (\textsf x, \mc Q) +  z$. 
\end{enumerate} 
{Then, for $\mc Q \in \textbf N $, the associated adjacency matrix is defined as $A^{\mc Q} =\{A(\textsf x,\textsf y;\mc Q)\}_{\textsf x,\textsf y\in \mc Q}$. That is, $A(\textsf x,\textsf y;\mc Q)= \one\{\textsf y\in \mathsf N (\textsf x, \mc Q)\}$. In particular, we set $A_n := A^{\PP_n}$.

Stabilization, informally speaking, requires that the impact of adding a point to the network be confined to a (random but bounded) local neighborhood of that point \cite{yukCLT}. We recall the conditions \eqref{eq:stabex2222} and \eqref{eq:stabexm2222}:
 Let $f: \mathbb R \rightarrow \mathbb R$ be a function. 
 
\been
\item  There exists an almost surely finite  random variable $\Rex$ (depending on $f$),  such that 
for any  $\mc A \in \tN $ with $\mc A \su \R^d \sm B\big(0, \Rex\big)$,
\begin{align}
	\label{eq:stabex}
	D_f\big((\PP \cap B(0, \Rex))\cup \mc A\big)  = D_f\big(\PP \cap B(0, \Rex)\big).
\end{align}
Recall that  $D_f$ is defined in \eqref{addone}.
\item  Moment condition: There exists $p > 2$ (depending on $f$)  such that  
\begin{align}
	\label{eq:stabexm}
	\sup_{0 \in W:\text{cube}} \E\big[|D_f(\PP\cap W)|^p\big] < \ff.
\end{align} 
\enen

As a  consequence of  \cite[Theorem 1.1]{trinh2}, the spectral measure of such stabilizing networks satisfy the CLT.

\begin{proposition} 
 Let \(f:\mathbb R\to\mathbb R\) be a   function. Consider a stabilizing network for which the conditions \eqref{eq:stabex} and \eqref{eq:stabexm} are satisfied, and let \(A_n\) denote its adjacency matrix with underlying point process \(\PP_n\).  Then there exists $\sigma_f^2 \ge 0$ such that as $n \rightarrow \infty$, \begin{align}
  \frac{\textup{Tr} [f(A_n)] - \E \textup{Tr} [f(A_n)] }{\sqrt n}  \overset{\textup{d}}{\rightarrow} \mc N (0,\sigma_f^2).
\end{align}
 
\end{proposition}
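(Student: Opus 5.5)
The plan is to apply \cite[Theorem 1.1]{trinh2} directly, in exactly the way it was used in the proof of the variance asymptotics in Section~\ref{ss:var2}. That theorem gives a CLT with limiting variance $\sigma^2\ge 0$ for translation-invariant, weakly stabilizing functionals of a Poisson process with a $p$-moment condition, $p>2$. The functional here is $H(\mc Q):=\textup{Tr}[f(A^{\mc Q})]$ with add-one cost $D_f$ from \eqref{addone}. So the work consists of checking that the hypotheses of \cite[Theorem 1.1]{trinh2} follow from \eqref{eq:stabex} and \eqref{eq:stabexm}.

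\emph{Step 1 (translation invariance).} By property (3) of the graph neighborhood, $\mathsf N(\mathsf x+z,\mc Q+z)=\mathsf N(\mathsf x,\mc Q)+z$. Hence $A^{\mc Q+z}$ is $A^{\mc Q}$ with the points relabelled, that is, a permutation conjugate of it. Its spectrum, and therefore $H(\mc Q+z)=H(\mc Q)$, is unchanged. The same argument shows that the add-one cost at a point $x$ is $D_f(\mc Q-x)$, so it is enough to study the add-one cost at the origin.

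\emph{Step 2 (weak stabilization).} I will repeat \eqref{weak} verbatim. Let $n_0$ be the smallest $n$ with $B(0,\Rex)\su W_n$; it is almost surely finite because $\Rex<\infty$ a.s. For $n\ge n_0$, take $\mc A=\PP_n\setminus B(0,\Rex)$ in \eqref{eq:stabex}. This gives
$$D_f(\PP_n)=D_f(\PP\cap B(0,\Rex))=D_f(\PP_{n_0}\cap B(0,\Rex)).$$
So $D_f(\PP_n)$ is eventually constant, which is the weak stabilization property required in \cite{trinh2}. A small point to check is that $\PP_n$ is the restriction of a single Poisson process $\PP$ on $\R^d$ to $W_n$, so that the windows are nested. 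I will also note that $W_n$ is a cube containing $0$, which is what \eqref{eq:stabexm} is stated for.

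\emph{Step 3 (moments).} Condition \eqref{eq:stabexm} is precisely the uniform bounded $p$-th moment condition, $p>2$, of \cite[Theorem 1.1]{trinh2} over cubes containing the origin. Combined with translation invariance, it covers add-one costs at every point of $W_n$.

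With these three steps, \cite[Theorem 1.1]{trinh2} gives that $\textup{Var}(H(\PP_n))/n\to\sigma_f^2\ge0$ and that $(H(\PP_n)-\E H(\PP_n))/\sqrt{|W_n|}\to\mc N(0,\sigma_f^2)$. Since $|W_n|=n$, this is the claim. If $\sigma_f^2=0$, the limit is the degenerate Gaussian, and the Markov argument after Theorem~\ref{thm:polt1} applies. I expect the main obstacle to be measurability and integrability bookkeeping rather than anything substantive. One needs $H(\PP_n)\in L^2$ so that the theorem applies; I would get this from the Poincar\'e inequality (Lemma~\ref{poincare}) together with the moment bound in Step 3.
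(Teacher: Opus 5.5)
Your proposal is correct and follows the paper's own argument. The paper likewise just invokes \cite[Theorem 1.1]{trinh2}, noting that \eqref{eq:stabex} gives weak stabilization via \eqref{weak}, that \eqref{eq:stabexm} is the required moment condition, and that translation invariance follows from the translation-invariance property of the neighborhood map $\mathsf N$. One caveat on your closing remark: Lemma~\ref{poincare} as stated already assumes $F\in L^2_{\PP}$, so it cannot by itself establish square-integrability of $\textup{Tr}[f(A_n)]$; the paper does not treat this point separately and relies only on the hypotheses of \cite[Theorem 1.1]{trinh2}.
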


\begin{proof}
    
 This immediately follows from  \cite[Theorem 1.1]{trinh2}. Indeed, the conditions \eqref{eq:stabex} (which implies weakly stabilizing condition, see \eqref{weak}) and  \eqref{eq:stabexm} satisfy the conditions in  \cite[Theorem 1.1]{trinh2}. {Note that the translation invariance condition is ensured by our assumptions on the neighborhood.}
\end{proof}

In Sections~\ref{sss:kNN} and~\ref{sss:rng}, we establish CLT for two canonical examples of stabilizing networks: the $k$-nearest neighbor graph   and the relative neighborhood graph.
Finally, in Section~\ref{ssec:out} we outline several additional spatial random network models in the literature, and provide a preliminary assessment of both (i) the main obstacles to applying our methods and (ii) the anticipated level of effort required to overcome them. This last discussion   serves as a natural roadmap for future extensions.

\subsection{$k$-nearest neighbor graphs}
\label{sss:kNN}
For a positive integer $k$, the $k$-nearest neighbor
graph  (kNN) is constructed by connecting two distinct points $X_i$ and $ X_j$ whenever $X_i$ is one
of the $k$-nearest neighbors of $X_j$ or $X_j$ is one of the $k$-nearest neighbors of $X_i$.

In the next theorem,
we establish the following CLT for the spectral measure of kNN.

\begin{theorem} \label{theorem11}
Let $k$ be any positive integer. Let $A_n$ be the adjacency matrix of  \textup{kNN} with a vertex set $\PP_n$. Then for any polynomial $f$,   there exists $\sigma_f^2 \ge 0$ such that  as $n \rightarrow \infty$, \begin{align}
  \frac{\textup{Tr} [f(A_n)] - \E \textup{Tr} [f(A_n)] }{\sqrt n}  \overset{\textup{d}}{\rightarrow} \mc N (0,\sigma_f^2).
\end{align}
 
\end{theorem}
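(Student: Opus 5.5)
Theorem~\ref{theorem11} follows from the preceding proposition (i.e.\ \cite[Theorem 1.1]{trinh2}) once we verify the stabilization condition \eqref{eq:stabex} and the moment condition \eqref{eq:stabexm} for kNN with $f$ a degree-$m$ polynomial. By linearity in $f$ and Minkowski's inequality it suffices to treat $f(x)=x^m$. Translation invariance is clear because the kNN neighborhood map depends only on relative positions. As in the RGG case, we use the score representation \eqref{sumofscore}. The add-one cost $D_f(\mc Q)$ only involves closed walks of length at most $m$ that either pass through the origin or use an edge that is created or destroyed when $0$ is inserted. Unlike RGG, edges can be destroyed, since $0$ may displace a former $k$-th nearest neighbour. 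However, every affected edge has an endpoint $X$ such that $0$ is among the $k$ nearest neighbours of $X$ in $\mc Q\cup\{0\}$.

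\textbf{Stabilization radius.} We use the classical kNN stabilization radius $R_0$ from the literature (e.g.\ \cite{stab2,mal_stab}). Cover $\R^d$ by finitely many cones $\mathsf C_1,\dots,\mathsf C_J$ with apex $0$ and small angular width. Let $\rho_j$ be the distance from $0$ to the $k$-th nearest point of $\PP$ in $\mathsf C_j$, and set $R_0:=c\max_j\rho_j$. A geometric argument shows that every point $X$ with $|X|>R_0$ has at least $k$ points closer to it than $0$, so $0$ is not among its $k$ nearest neighbours. It also shows that the $k$-nearest-neighbour sets of all points in $B(0,R_0)$ are determined by $\PP\cap B(0,2R_0)$. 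Since $R_0$ only controls one edge-step, we iterate: set $R_1:=R_0$ and let $R_{i+1}$ be the analogous radius ensuring that the neighborhoods of all points within distance $R_i$ of $0$ are determined inside $B(0,R_{i+1})$. After $m$ steps, $\Rex:=R_m$ works, because every walk of length $\le m$ relevant to $D_f$ stays within distance $R_m$ and all of its edges are determined by the configuration inside $B(0,\Rex)$. Outside configurations $\mc A$ cannot create edges to points in the ball, since those points already have $k$ closer neighbours inside. Hence \eqref{eq:stabex} holds.

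\textbf{Moments.} The number of walks of length $\le m$ through $0$ or through an affected vertex is at most $C_m\,\PP(B(0,\Rex))^{m+1}$, because all of them lie in $B(0,\Rex)$. It therefore suffices to prove $\sup_W\E[\PP(B(0,\Rex)\cap W)^{p(m+1)}]<\infty$ for $p=3$. Restricting to a cube $W\ni 0$ only changes the cone radii near the boundary; we take $\rho_j=\infty$ but truncate to $\operatorname{diam}(W)$. This is harmless because the balls are intersected with $W$. The key input is an exponential tail $\P(R_0>t)\le Ce^{-ct^d}$, since a cone of radius $t$ containing fewer than $k$ points has probability $\le Ce^{-ct^d}$. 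Iterating $m$ times through a union bound over the at most $\PP(B(0,R_i))$ points yields a tail for $R_m$ that still decays faster than any polynomial. Combining this with Poisson concentration for $\PP(B(0,t))$ gives all moments.

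\textbf{Main obstacle.} The expected hard step is the iterated radius: controlling the tail of $R_m$ uniformly over cubes $W$, including boundary effects where cones exit $W$. I would handle it by bounding $R_{i+1}\le R_i+\max_{X\in\PP\cap B(0,R_i)}R_0(X)$, where $R_0(X)$ is the cone radius centred at $X$. A union bound then gives $\P(R_{i+1}>2t)\le\P(R_i>t)+\E[\PP(B(0,t))]\,Ce^{-ct^d}$, which inductively keeps a stretched-exponential tail.
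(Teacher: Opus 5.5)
Your reduction to \eqref{eq:stabex}--\eqref{eq:stabexm} is sound. So is your observation that inserting $0$ only destroys edges at vertices having $0$ among their $k$ nearest neighbours, and so is the stabilization argument. The paper uses one self-consistent radius $\Rex^{(2)}=\min\{\ell\ge 100m^2:\text{all sector radii of }0\text{ and of points of }\PP\cap B(0,\ell)\text{ are at most }\sqrt\ell\}$, so that $m$ steps from the origin stay in $B(0,m\sqrt{\Rex^{(2)}})\subset B(0,\Rex^{(2)}/4)$. You instead iterate $R_{i+1}\le R_i+\max_{X\in\PP\cap B(0,R_i)}R_0(X)$ and propagate the tail by a union bound. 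In the full space this works equally well, up to harmless off-by-one adjustments in the number of iterations.

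The gap is in the moment condition \eqref{eq:stabexm}. It must hold uniformly over all cubes $W\ni 0$, in particular those where $0$ sits near a face or a corner. Your convention of setting $\rho_j:=\operatorname{diam}(W)$ whenever a cone contains fewer than $k$ points of $\PP\cap W$ is not harmless. If $0$ is within distance $O(1)$ of $\partial W$, an outward-pointing cone meets $W$ in a set of bounded volume. So with probability bounded away from zero (and with probability one if $0$ is a corner of $W$) that cone is truncated. Then $B(0,\Rex)\cap W=W$, and your bound becomes $\E[\PP(W)^{3(m+1)}]$, which grows like $|W|^{3(m+1)}$.

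For the same reason, the tail $\P(R_0(X)>t)\le Ce^{-ct^d}$ that you feed into your union bound fails for points $X$ near $\partial W$. Hence the inductive tail estimate for $R_m$ in your last paragraph is not uniform in $W$.

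What is missing is a boundary-adapted radius with two properties: it still controls neighbourhoods in the graph on $\PP\cap W$, but it does not penalize cones that leave $W$. You also need a tail bound for this radius that is uniform in $W$. The paper stops exploring a sector as soon as it either contains $k+1$ points or no longer uncovers new parts of $W$ (see \eqref{eq:rexp2}). It then dominates the resulting radius by the full-space one, $\Rex^{(2,W)}\le\Rex^{(2)}$, which gives \eqref{61444}. Combined with \eqref{123} and Cauchy--Schwarz, this yields the uniform moment bound you were aiming for.
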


\begin{proof}

It suffices to verify the conditions \eqref{eq:stabex} and \eqref{eq:stabexm} for any polynomial $f$.  To simplify the exposition, we present the argument for the case $d = 2$ and consider the monomial case $f(x) = x^m$. 

\medskip 

\noindent {\bf Stabilization.} 
Let $T_j(\ell)$, $1 \le j \le 6$, denote six disjoint equilateral circular sectors of radius $\ell$ sharing a common vertex at the origin, with their boundaries (and the vertex) excluded.  
We specify the orientation by requiring that $T_1(\ell)$ has one side parallel to the horizontal axis.  
The purpose of introducing these sectors is to build a geometric  shield that protects the node from external influence in the corresponding cone directions.

 For $\mc Q \in \textbf N$ and $\mathsf x\in \mc Q$,
\begin{align}
	\label{eq:rexp1}
\Rex^{(1)}(\mathsf x; \mc Q) := \max_{1\le j \le 6}\min\{\ell \in \mathbb N: \mc Q(\mathsf x + T_j(\ell)) \ge k + 1\}.
\end{align}  
Then for any $\mathsf x\in \mc Q$, 
\begin{align} \label{611}
    \text{all neighbors of $\mathsf x$ {(in $\mc Q$)} are contained in $B(\mathsf x,\Rex^{(1)}(\mathsf x;\mc Q))$.}
\end{align} 
As $\PP(T_i(\ell))$ is Poisson distributed with parameter $|T_i(\ell)| \in \Theta(\ell^2)$, there exists  $C>0$ depending on $k$ such that for all $\ell \in \mathbb N,$
\begin{align}
	  \label{eq:rexp5}	
    \P\big(\Rex^{(1)}(x; \PP \cup \{x\}) \ge \ell\big) &\le C(\ell^2)^{k}e^{- \ell^2},\qquad  \forall  x\in \mathbb R^2.
\end{align} 
Now,  we construct a stabilization radius $\Rex^{(2)}$ with respect to   $f(x) = x^m$:  
\begin{align}
	\label{eq:rexp}
	\Rex^{(2)}  := \min\big\{\ell \ge 100m^2: \Rex^{(1)}(0; \mc P \cup \{0\})  \vee \max_{\mathsf x \in \mc P \cap B(0, \ell)}\Rex^{(1)}(\mathsf  x; \mc P) \le \sqrt \ell\big\}.
\end{align}
Note that
$\Rex^{(2)}$ depends on the polynomial degree $m$, but we omit this dependence in the notation for readability.
By this definition,
\begin{align} \label{bb}
   \sqrt {\Rex^{(2)} } \ge \max_{\mathsf x \in \mc P \cap B(0, \Rex^{(2)})} \Rex^{(1)}(\mathsf  x; \mc P)  .
\end{align}
For any $\ell \ge 100m^2$,  by a union bound,
\begin{align}\label{614}
\P\big( \Rex^{(2)}   \ge \ell\big)& \le \P\big(\max_{\mathsf x \in \PP\cap B(0, \ell)}\Rex^{(1)}(\mathsf x; \mc P) \ge \sqrt \ell \big) + \P( \Rex^{(1)}(0; \mc P \cup \{0\})    \ge \sqrt \ell  )  \nonumber  \\
&\overset{\eqref{eq:rexp5}}{\le}  \E\big[\#\big\{\mathsf x \in \PP\cap B(0,  \ell ): \Rex^{(1)}(\mathsf x; \mc P ) \ge \sqrt \ell\big\} \big]  + C \ell^k e^{-\ell} \nonumber  \\
&= \int_{B(0, \ell )} \P\big(\Rex^{(1)}(x; \mc P \cup \{x\} ) \ge \sqrt \ell\big) \d x + C \ell^k e^{-\ell} \overset{\eqref{eq:rexp5}}{\le}   C\ell^{2+k}e^{-\ell},
\end{align}
where we used Mecke formula (see Lemma \ref{mecke}) in the second last equality.

Now, we verify the stabilizing property \eqref{eq:stabex} with a   stabilization radius $\Rex := \Rex^{(2)}$. Observe that in $k$-nearest neighbor
graphs, the addition of a new point cannot create any new edge that did not already exist before the addition, i.e.  for any $\mathsf x, \mathsf y\in\mc  Q \subset \mc  Q'$, we have $\mathsf y \notin \mathsf N(\mathsf x, \mc Q) \Rightarrow \mathsf y \notin \mathsf N(\mathsf x, \mc Q')$. {On the other hand, existing edges may be removed after the addition.} Therefore, the difference $D_f (\mc Q)$   can be written as $H_f^{(1)}(\mc Q)-H_f^{(2)}(\mc Q),$ where   
\begin{itemize}
    \item $H_f^{(1)}(\mc Q)$  denotes the contribution from the $m$-paths in $\mc Q \cup \{0\}$ passing through  the origin $0$;
    \item $H_f^{(2)}(\mc Q)$ denotes the contribution from the 
$m$-paths in $\mc Q$ that exist before the addition of the origin but are removed once the origin is inserted. 
\end{itemize}
We claim that
for any  $\mc A \in \tN $ with $\mc A \su \R^d \sm B\big(0, \Rex^{(2)}\big)$, two point processes
 $	 (\PP \cap B(0, \Rex^{(2)}))\cup \mc A  $ and $   \PP \cap B(0, \Rex^{(2)})  $ have the same values of $H_f^{(1)}$ and $H_f^{(2)}$.
This  would follow from    the following observations:
\been

\im  {for any $\mathsf x\in \PP \cap B(0, {\Rex^{(2)}}/2)$, all neighbors of $\mathsf x$ in $(\PP \cap B(0, \Rex^{(2)}))\cup \mc A $  are contained in $B\big(\mathsf x,\sqrt {{\Rex^{(2)}} } \big)$}. This in particular holds when $\mc A = \emptyset$;
\im  {for any $\mathsf x\in  (\PP \cap B(0, {\Rex^{(2)}}/2) ) \cup \{0\}$, all neighbors of  $\mathsf x$ in $(\PP \cap B(0, \Rex^{(2)}))\cup \mc A \cup \{0\}$  are contained in $B\big(\mathsf x,\sqrt {{\Rex^{(2)}} } \big)$}. This in particular holds when $\mc A = \emptyset$; 
\im  all $m$-paths in $  ( \PP \cap B(0, \Rex^{(2)}) ) \cup \mc A \cup \{0\} $, passing through the origin, lie inside $B\big(0, \Rex^{(2)}/4\big)$. Consequently, such a path is unaffected by the presence of $\mathcal A$;
\im  all $m$-paths in $(\mathcal{P} \cap B(0, \Rex^{(2)})) \cup \mathcal A$ that start in $B\big(0, \Rex^{(2)}/4\big)$ remain contained in $B\big(0, \Rex^{(2)}/2\big)$.  
Consequently, whether such a path disappears upon the removal of the origin is independent of the presence of $\mathcal A$;
 
\im  all $m$-paths in $(\PP \cap B(0, \Rex^{(2)}))\cup \mc A$ that start outside $ B\big(0, \Rex^{(2)}/4\big) $  are unaffected by the addition of  the origin 0. This in particular holds when $\mc A = \emptyset$.
\enen

For the two point configurations 
\((\mathcal{P} \cap B(0,\Rex^{(2)})) \cup \mathcal{A}\) and \(\mathcal{P} \cap B(0,\Rex^{(2)})\),  
the equality for \(H_f^{(1)}\) follows from (iii), while the equality for \(H_f^{(2)}\) follows from (iv) and (v).


~

We now prove the above properties. 
 For the property (i),  first note that for $\mathsf  x \in  \PP \cap B(0,\Rex^{(2)}/2)$, {we have $\Rex^{(1)}(\mathsf x;\PP) = \Rex^{(1)}(\mathsf x;\mc Q)$ for any $\mc Q \in \mathbf N$ satisfying $\mc Q \cap B(0, \Rex^{(2)}) = \PP \cap B(0, \Rex^{(2)})$. This follows from \eqref{611} along with the fact $\Rex^{(1)}(\mathsf x;\PP) < {\Rex^{(2)}}/2$ (see \eqref{bb}).} Thus by \eqref{611}  and  \eqref{bb}, all neighbors of $\mathsf  x$ in $\PP$ are contained in $B(\mathsf x,\sqrt{\Rex^{(2)}})$. As  $|\mathsf x| + \sqrt{\Rex^{(2)}} < \Rex^{(2)}$, by the connection rule of kNN, the same containment property holds  for the underlying point process $(\PP \cap B(0, \Rex^{(2)}))\cup \mc A$ as well.
 The property (ii) follows by the same reasoning.

The containment properties in $B\big(0,\Rex^{(2)}/2\big)$ stated in (iii) and (iv) follow  from (i) together with the inequality
\(m \sqrt{\Rex^{(2)}} < \Rex^{(2)}/4\) (recall that \(\Rex^{(2)} \ge 100m^2\); see \eqref{eq:rexp}). The claim that such an $m$-path is unaffected by the presence of $\mathcal A$ follows from the observation that its distance from $\mathcal A$ is greater than $\Rex^{(2)}/2> \sqrt{\Rex^{(2)}}$, together with (i).

  Finally, to verify (v), note that since $\Rex^{(2)}/4 > 2m\sqrt{\Rex^{(2)}}$, any such $m$-path must lie outside  
$B\big(0, \sqrt{\Rex^{(2)}}\big)$.  
Indeed, if such an $m$-path were to intersect $B\big(0, \sqrt{\Rex^{(2)}}\big)$, then one of its edges, with an endpoint in  
$B\big(0, \Rex^{(2)}/2\big)$, would necessarily have length at least
$ \frac{1}{m} ( \Rex^{(2)}/4 - \sqrt{\Rex^{(2)}} ) > \sqrt{\Rex^{(2)}}$,
contradicting (i).  
Together with (ii), this shows that such a path cannot be removed when the origin is added.

  ~

Therefore, we deduce that $\Rex := \Rex^{(2)}$ satisfies \eqref{eq:stabex} and thus is a stabilization radius. Since $ \Rex^{(2)}<\infty$ almost surely (see \eqref{614}), we finish the proof of \eqref{eq:stabex}.

~

\noindent {\bf Moment bound.} Next, we establish the moment bound  \eqref{eq:stabexm}.  
The argument follows the same general strategy as in the stabilization analysis, but with additional care required to handle boundary effects. For $\mc Q \in \mathbf N,$
as the graph is constructed only on the vertex set $\mc Q\cap W$, points outside $W$
are irrelevant for determining the $k$-nearest neighbors of a vertex $x\in W$.
Accordingly, when exploring a sector $x+T_j(\ell)$, we only need to enlarge it until its
intersection with $W$ stabilizes, i.e., $T_j(\ell)\cap W = T_j(\ell+1)\cap W$; beyond this
scale, further growth occurs entirely outside $W$ and cannot reveal any additional candidate
neighbors. This motivates the following definition, which stops either once $k+1$
points have been found or once the sector no longer uncovers new portions of the observation
window: for $\mathsf x \in \mc Q \cap W,$
\begin{align}
	\label{eq:rexp2}
	\Rex^{(1,W)}(\mathsf x; \mc Q) := \max_{1\le j \le 6}\min\Big\{\ell \in \mathbb N: \mc Q(\mathsf  x + T_j(\ell)) \ge k + 1 \text{ or } (\mathsf  x + T_j(\ell) ) \cap W = (\mathsf  x + T_j(\ell+1) )  \cap W\Big\}.
\end{align}
Then for $\mathsf x \in \mc Q \cap W,$
\begin{align} \label{6111}
    \text{all neighbors of $\mathsf x$ {(in $\mc Q\cap W$)} are contained in $B(\mathsf x,\Rex^{(1,W)}(\mathsf x;\mc Q))$.}
\end{align} 
 Similarly as in \eqref{eq:rexp},  for any cube $W$ with $0\in W,$ define
\begin{align}
	\label{eq:rexp3}
	\Rex^{(2,W)} := \min\big\{\ell \ge 100m^2: \Rex^{(1,W)}(0;  \PP \cup \{0\})  \vee \max_{\mathsf x \in  \PP \cap W \cap B(0, \ell)}\Rex^{(1,W)}(\mathsf  x; \PP) \le \sqrt \ell\big\}.
\end{align}
Note that  as  $\Rex^{(1,W)} (\mathsf x; \mc Q) \le \Rex^{(1)} (\mathsf x; \mc Q)  $,   we have 
  $\Rex^{(2,W)} \le \Rex^{(2)}  $. Thus $\Rex^{(2,W)}$ satisfies the similar upper bound for the right tail as in \eqref{614}: There exists $C>0$ (independent of $W$) such that for any cube $W$ with $0\in W$ and $\ell \ge 100m^2$,
\begin{align}\label{61444}
\P\big( \Rex^{(2,W)}   \ge  \ell\big) \le   C\ell^{2+k}e^{-\ell}.
\end{align}
In addition, analog of the aforementioned properties (i) and (ii) hold as well for the graph induced by $\mc P \cap W$: 
\been
\im  {for any $\mathsf x\in \PP \cap W \cap B(0, {\Rex^{(2,W)}}/2)$, all neighbors of $\mathsf x$ in $\PP \cap W  \cap B(0, \Rex^{(2,W)}) $  are contained in $B\big(\mathsf x,\sqrt {{\Rex^{(2,W)}} } \big)$}.  
\im  {for any $\mathsf x\in  (\PP \cap W  \cap B(0, {\Rex^{(2,W)}}/2) ) \cup \{0\}$, all neighbors of  $\mathsf x$ in $(\PP \cap W  \cap B(0, \Rex^{(2,W)})) \cup \{0\}$  are contained in $B\big(\mathsf x,\sqrt {{\Rex^{(2,W)}} } \big)$}.  
\enen  
Recall that $D_f(\PP \cap W)$   is  written as $H_f^{(1)}(\PP \cap W)-H_f^{(2)}(\PP \cap W)$. We compute the contributions of $m$-loops in $H_f^{(1)}(\PP \cap W) $  and $ H_f^{(2)}(\PP \cap W)$. It is clear that, by property (ii), all $m$-paths in $( \mc P \cap W) \cup \{0\} $ passing through the origin  are contained in \(B(0, m\sqrt{	\Rex^{(2,W)}})\). In addition, any $m$-path in $\mc P\cap W$ that disappears after adding the origin must intersect $B(0,\sqrt{\Rex^{(2,W)}})$ (otherwise it would remain unaffected by the property (ii)).
Once a path intersects $B(0,\sqrt{\Rex^{(2,W)}})$, each further step can move by at most $\sqrt{\Rex^{(2,W)}}$, due to the property (i) along with the fact $ m \sqrt{\Rex^{(2,W)}}< \Rex^{(2,W)}/2 $. This implies that  the entire $m$-path is contained in
$ B \big(0,(m+1)\sqrt{\Rex^{(2,W)}}\big)$.

Hence, we deduce that
\begin{align} \label{123}
    |D_f(\PP\cap W)|\le \PP\big(B(0, (m+1) \sqrt{\Rex^{(2,W)}})\big)^{m + 1}.
\end{align}
Therefore, using Hölder's inequality,
\begin{align*}
\E\big[|D_f(\PP\cap W)|^4\big] &=  \sum_{\ell=1}^\ff \E\big[|D_f(\PP\cap W)|^4 ; \Rex^{(2,W)}=\ell \big]  \\
 &\overset{\eqref{123}}{\le}   \sum_{\ell=1}^\ff \E\big[\PP\big(B(0, (m+1) \sqrt{ \ell})\big)^{4(m + 1)} ; \Rex^{(2,W)}=\ell \big]\\
    &\le \sum_{\ell=1}^\ff  \sqrt { \E \big[ \PP\big(B(0, (m+1) \sqrt{ \ell})\big)^{8(m + 1)} \big]}\sqrt { \P (  \Rex^{(2,W)}=\ell )  }  \\
&\le C\sum_{\ell=1}^\ff \ell^{8(m+1)} \sqrt{\P(\Rex^{(2,W)} = \ell)} \overset{\eqref{61444}}{<} \ff,
\end{align*} 
establishing the moment bound \eqref{eq:stabexm}.

\end{proof}

\noindent\emph{Variance positivity.}    We now show that $\s_f^2 > 0$ holds for polynomials with non-negative coefficients unless $f\equiv 0$, i.e. $f(x) = a_{m_1}x^{m_1} + a_{m_2}x^{m_2} +  \cdots + a_{m_u}x^{m_u}  $ ($m_1 > m_2 > \dots > m_u  \ge 0$) with $\min_{1\le i \le u}a_{m_i}>0 $.   By \cite[Theorem 1.1]{trinh2}, it suffices to show that $D_f(\PP)$ is non-degenerate. For the sake of simplicity, we consider the case $k = 1$, i.e., Poisson-nearest neighbor graph. Since there are no isolated nodes, if the addition of the origin does not remove any existing edges, then  for any $n\in \mathbb N$, 
$$L(f)((\PP \cup \{0\}) \cap W_n) \ge L(f)(\PP \cap W_n) +\min_{1\le i \le u}a_{m_i}.$$
This is because, after adding the origin $0$, a new $m_i$-hop path emanating from $0$ is created for every $1\le i\le u$, as there are no isolated nodes.
Therefore, it suffices to show that with a positive probability, adding the origin does not remove any edges.

Define the event $\mc E$ as follows:
\begin{align*}
    \mc E 
:= & \ \{\text{$B(0,1)$ contains no Poisson points}\}  \\
&\cap   \{\text{every point on $\partial B(0,1)$ lies within distance at most $1/4$ 
of some point in $\PP$}\}.
\end{align*}
 We claim that  on the event $\mc E$, the addition of the origin does not remove any edges. 
To see this, consider two points $X_i, X_j \in \mc P_n$ forming an edge, and suppose that the addition of $0$ removes this edge. This would imply, without loss of generality, that $|X_i| < |X_i - X_j|$. 

Under the event $\mc  E$, both $X_i$ and $X_j$ lie outside $B(0,1)$. Let  {$M := X_i/|X_i| \in \partial B(0,1)$} and $X_\ell$ be a Poisson point satisfying $|X_\ell - M| \le 1/4$, 
whose existence is guaranteed on the event $\mc  E$.
 Then,
$$|X_i - X_\ell| \le |X_i - M| + |M - X_\ell| \le (|X_i| - 1) + 1/4 < |X_i| < |X_i - X_j|  ,$$
contradicting the assumption that
$X_j$ was the closest neighbor to $X_i$ before the addition of $0$ (recall that we are considering the   case $k=1$).

Therefore it reduces to show that the event $\mc E$ defined above has positive probability. Take a finite set
of points $v_1,\dots,v_M \in \mathbb S^{d-1}$ such that $\mathbb S^{d-1} \subset \cup_{i=1}^M B\big(v_i, 1/8\big),$
and moreover the points can be chosen so that the balls 
$B(v_i,1/16)$ are pairwise disjoint.
Setting  $A_i := B(v_i,1/16) \cap (B(0,1))^c
$, we have that  $B(0,1)$ and the sets $A_1,\dots,A_M$ are mutually disjoint. Then    define the event 
\[
\mc E' := \Big\{\PP\big(B(0,1)\big) = 0\Big\} \cap 
       \bigcap_{i=1}^M \Big\{ \PP(A_i) \ge 1 \Big\}.
\]
By the mutual disjointness of these sets,
$
\mathbb P(\mc  E')
 = \mathbb P\big(\PP(B(0,1)) = 0\big) 
   \prod_{i=1}^M \mathbb P\big(\PP(A_i) \ge 1\big)> 0.
   $

We now claim that $\mc E' \subset \mc E$. Indeed, on $\mc E'$ we clearly have
$\PP(B(0,1)) = 0$. Moreover, for each $i$ there exists at least one Poisson
point $z_i \in A_i \subset B(v_i,1/16)$ with $|z_i| \ge 1$.
Given any point $y \in \mathbb S^{d-1}$, by the covering property there exists 
$i$ such that $|y - v_i| \le 1/8$. Then
\[
|y - z_i| \le |y - v_i| + |v_i - z_i| \le \frac{1}{8} + \frac{1}{16}
<\frac{1}{4}.
\]
Therefore $\mc E'$ 
implies $\mc E$, and hence  the event $\mc E$ has positive probability as well.

{Finally, we note that a similar argument applies for general $k \ge 2$. 
In this case, one considers the event that every point on the unit sphere 
lies within distance at most $1/4$ of $k$ distinct Poisson points.} 

%
%
\subsection{Relative neighborhood graphs}
\label{sss:rng}
We consider the Poisson relative neighborhood graph (RNG) (we refer to \cite{rng} for more details). For $x,z\in \mathbb R$, define  $B_{x,z}:=B(x,|x-z|) \cap B(z,|x-z|)$. For $\mc Q \in \mathbf N,$ two points $X_i,X_j \in \mc Q$ are connected by an edge if and only if
$$
\mc Q \cap B_{X_i,X_j} = \emptyset.
$$

In the next theorem,
we establish the following CLT for the spectral measure of RNG.

\begin{theorem} \label{theorem12}
Let $A_n$ be the adjacency matrix of \textup{RNG} with a vertex set $\PP_n$. Then for any polynomial  $f$,   there exists $\sigma_f^2 \ge 0$ such that  as $n \rightarrow \infty$, \begin{align}
  \frac{\textup{Tr} [f(A_n)] - \E \textup{Tr} [f(A_n)] }{\sqrt n}  \overset{\textup{d}}{\rightarrow} \mc N (0,\sigma_f^2).
\end{align}
 
\end{theorem}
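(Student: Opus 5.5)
As in the proof of Theorem~\ref{theorem11}, I would apply the stabilizing-network proposition, i.e.\ \cite[Theorem 1.1]{trinh2}. So it suffices to verify the stabilization condition \eqref{eq:stabex} and the moment condition \eqref{eq:stabexm} for $f(x)=x^m$; general polynomials then follow by linearity. I would again work in $d=2$, keeping constants generic. The first step is a local radius controlling the neighbourhood of each point. With the six sectors $T_j(\ell)$ as before, set
\[
\Rex^{(1)}(\mathsf x;\mc Q):=\max_{1\le j\le 6}\min\{\ell\in\mathbb N:\ \mc Q(\mathsf x+T_j(\ell))\ge 1\}.
\]
The RNG satisfies the classical cone property. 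If $\mathsf y\in \mathsf x+T_j$ and some $\mathsf z\in\mc Q\cap(\mathsf x+T_j)$ has $|\mathsf z-\mathsf x|<|\mathsf y-\mathsf x|$, then $\mathsf z\in B_{\mathsf x,\mathsf y}$, because the sector has opening angle $60^\circ$. Hence all RNG neighbours of $\mathsf x$ lie in $B(\mathsf x,2\Rex^{(1)}(\mathsf x;\mc Q))$, which is the analogue of \eqref{611} up to a factor $2$. Moreover, whether an edge $\{\mathsf x,\mathsf y\}$ is present depends only on $\mc Q\cap B_{\mathsf x,\mathsf y}\subset B(\mathsf x,|\mathsf x-\mathsf y|)$. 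The tail bound $\P(\Rex^{(1)}(x;\PP\cup\{x\})\ge \ell)\le Ce^{-c\ell^2}$ is immediate.

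Next I would define $\Rex^{(2)}$ exactly as in \eqref{eq:rexp}, with the threshold $\sqrt\ell$ and possibly a larger constant than $100m^2$. The Mecke-formula union bound \eqref{614} then gives an exponential tail, so $\Rex^{(2)}<\infty$ a.s. The RNG is also monotone: adding a point can only delete edges, never create them. Therefore $D_f=H_f^{(1)}-H_f^{(2)}$ as in the kNN case. Properties (i)--(v) transfer verbatim, with neighbours of $\mathsf x\in B(0,\Rex^{(2)}/2)$ contained in $B(\mathsf x,2\sqrt{\Rex^{(2)}})$. The key new check for (i) is that an edge $\{\mathsf x,\mathsf y\}$ with both endpoints in $B(0,\Rex^{(2)}/2)$ and $|\mathsf x-\mathsf y|\le 2\sqrt{\Rex^{(2)}}$ has $B_{\mathsf x,\mathsf y}$ well inside $B(0,\Rex^{(2)})$. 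Its status is therefore unaffected by $\mc A$. The same holds for edges incident to $0$. The arguments for (iii)--(v) are path-length bookkeeping with $m\cdot 2\sqrt{\Rex^{(2)}}<\Rex^{(2)}/4$.

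For the moment bound I would define $\Rex^{(1,W)}$ as in \eqref{eq:rexp2}, stopping once a point is found or the sector no longer meets new parts of $W$. The cone argument is still valid for the graph on $\mc Q\cap W$. If the sector $\mathsf x+T_j$ contains no point of $\mc Q\cap W$, it contains no neighbour at all, so the truncation is harmless. Then $\Rex^{(2,W)}\le\Rex^{(2)}$ inherits the tail bound \eqref{61444}. The analogue of \eqref{123},
\[
|D_f(\PP\cap W)|\le \PP\big(B(0,(2m+1)\sqrt{\Rex^{(2,W)}})\big)^{m+1},
\]
combined with Hölder's inequality as before, gives finite fourth moments.

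I expect the main obstacle to be the geometric step making the RNG behave like kNN. The cone lemma needs care at sector boundaries and in the truncated setting near $\partial W$, where a nearest point in a sector may lie outside $W$. One must also confirm that the lune $B_{\mathsf x,\mathsf y}$ of every relevant edge stays inside the protected ball, which is what forces the factor-$2$ enlargements and the larger threshold in $\Rex^{(2)}$.
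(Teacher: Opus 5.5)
Your overall route is the paper's: sector radii $\Rex^{(1)},\Rex^{(2)}$ and their window versions, properties (i)--(v), then \cite[Theorem 1.1]{trinh2}. The gap is your claim that (iii)--(v) are mere path-length bookkeeping.

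\textbf{Why the kNN argument does not transfer.} In the kNN proof, property (v) and the localisation behind \eqref{123} rest on a kNN-specific fact. Inserting $0$ can delete an edge $\{u,v\}$ only if $0$ becomes one of the $k$ nearest neighbours of $u$ or $v$, so (ii) puts that endpoint within $\sqrt{\Rex^{(2)}}$ of $0$. In the RNG, $\{u,v\}$ is deleted exactly when $0\in B_{u,v}$. This does not make $0$ adjacent to $u$ or $v$, so (ii) says nothing about it.
\begin{itemize}
\item If one endpoint lies in $B(0,\Rex^{(2)}/2)$, your cone lemma still gives $|u|<|u-v|\le 2\sqrt{\Rex^{(2)}}$.
\item Nothing in your argument excludes an edge with both endpoints far away (e.g.\ both in the arbitrary set $\mc A$) whose empty lune contains $0$.
\end{itemize}
Such an edge would make $H_f^{(2)}$ depend on $\mc A$. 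It would also break your bound $|D_f(\PP\cap W)|\le \PP\big(B(0,(2m+1)\sqrt{\Rex^{(2,W)}})\big)^{m+1}$.

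\textbf{The missing lemma.} You need a statement of the form: if $B_{u,v}$ is empty and contains $0$, then $\min(|u|,|v|)\lesssim \Rex^{(1)}(0;\cdot)$. Since $|u-v|$ is the longest side of the triangle $0uv$, its angle at $0$ exceeds $\pi/3$. The convex lune contains this triangle, hence an empty wedge at $0$ of opening $>\pi/3$ and radius $\gtrsim\min(|u|,|v|)$. If that angle is close to $\pi$, use instead the ball $B(0,|u-v|-\max(|u|,|v|))$, whose radius is at least $\frac{\sqrt3}{2}\min(|u|,|v|)$.

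With the paper's $13$ sectors of opening $2\pi/13<\pi/6$, any such wedge contains a whole fixed sector of $0$, so the lemma is immediate. This is what the finer partition buys.

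With your six sectors of opening $\pi/3$ the lemma does not follow from the sector points of $0$ alone. Suppose $0$ sits near a corner of a large lune. Near $0$ the lune lies inside the tangent wedge at that corner, of opening exactly $2\pi/3$, and its boundary arcs curve inward. Containing one of your fixed $\pi/3$-sectors in every orientation would require a full closed $2\pi/3$-wedge. So in an aligned orientation all six sector points of $0$ can lie just outside the lune, while $|u|,|v|\approx|u-v|$ are arbitrarily large.

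\textbf{What you need to add.} Either refine the partition as the paper does, or add a separate argument. That argument would show that $\Rex^{(1)}\le\sqrt{\Rex^{(2)}}$ at all Poisson points of $B(0,\Rex^{(2)})$ rules out large empty lunes meeting $B(0,\Rex^{(2)}/2)$. In either case the same point must be repeated in the window version used for the moment bound.
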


\begin{proof}

As before, it suffices to verify the conditions \eqref{eq:stabex} and \eqref{eq:stabexm}.

\medskip

\noindent {\bf Stabilization.} 
We define the stabilization radius similarly to the kNN
graph.  To ease the presentation, we restrict ourselves to the case $d=2$ as before. We let $T_j(\ell)$, $1\le j \le 13$, be disjoint equilateral circular sectors of radius $\ell$ sharing one vertex at the origin. Note that a similar splitting into sectors has been used to prove stabilization in \cite[Section 6]{yukCLT}. However, due to the fact that we consider the RNG (instead of the $k$-nearest neighbour graph), we need to split the plane into a larger number of sectors. Then for $\mc Q \in \textbf N$ and $\mathsf x\in \mc Q$, define
\begin{align}
\label{eq:rexp1_rng}
\Rex^{(1)}(\mathsf x; \mc Q) := \max_{1\le j\le 13}\min\{\ell \in \mathbb N: \mc Q(\mathsf x + T_j(\ell ))\ge 1\}.
\end{align}
By the same reasoning as in kNN,  $\Rex^{(1)}$ satisfies the bound \eqref{eq:rexp5} with $k=1$. Next,
set  
\begin{align}
	\label{eq:rexp_rng}
	\Rex^{(2)}  := \min\Big\{\ell  \ge 100m^2:  \Rex^{(1)}(0; \mc P \cup \{0\})  \vee  \max_{\mathsf x \in \PP\cap B(0, \ell)}\Rex^{(1)}(\mathsf x; \PP) \le \sqrt \ell\Big\}.
\end{align}
Then, the points (i)--(v) stated in the  kNN
graph case follow analogously. To show (i), for the sake of simplicity we assume $\mathsf x =0$, and we need to check that for $\mathsf y \in \PP$ with $\PP \cap B_{0,\mathsf  y }=\es$, it holds that $|\mathsf y| \le \sqrt {\Rex^{(2)}}$. To see this, let $C(\mathsf y)$ be the circular sector with apex 0, radius $|\mathsf  y|$ and angle $2\pi/3$, whose axis passes through $\mathsf  y$. Note that $C(\mathsf  y) \subset B_{0,\mathsf  y}$ (see Figure \ref{fig:rng}). Since the cones $T_j(|\mathsf  y|)$ have an opening angle $2\pi/13$, we have $T_j(|\mathsf  y|) \subset  C(\mathsf  y) \subset B_{0,\mathsf  y}$ for some $1\le j\le 13$. As there are no Poisson points in $B_{0,\mathsf  y}$ and thus in $T_j(|\mathsf  y|),$ this implies   ${\Rex^{(1)}} (0; \mc Q) \ge |\mathsf  y|$ and thus $|\mathsf  y| \le \sqrt {\Rex^{(2)}}$.

\begin{figure}
\begin{tikzpicture}[scale=0.8]
    \coordinate (y) at (2,1.5);
    \coordinate (O) at (2,-1.5);

    \draw[name path=diskO] (O) circle (3.0);
    \draw[name path=disky] (y) circle (3.0);

    \path [name intersections={of=diskO and disky, by={A,B}}];

    \draw[blue, ultra thick] (O) -- (A);
    \draw[blue, ultra thick] (O) -- (B);

    \filldraw (y) circle (2pt) node[above right] {\Large $y$};
    \filldraw (O) circle (2pt) node[below right] {\Large $0$};
    \filldraw (A) circle (2pt) node[above left] {};
    \filldraw (B) circle (2pt) node[below left] {};
    \draw[dashed] (O)--(y);

    \draw pic["$2\pi/3$", draw=black,-, angle radius=1cm]
        {angle=A--O--B};

\end{tikzpicture}
\caption{Illustration of the stabilization radius in the RNG}
\label{fig:rng}
\end{figure}
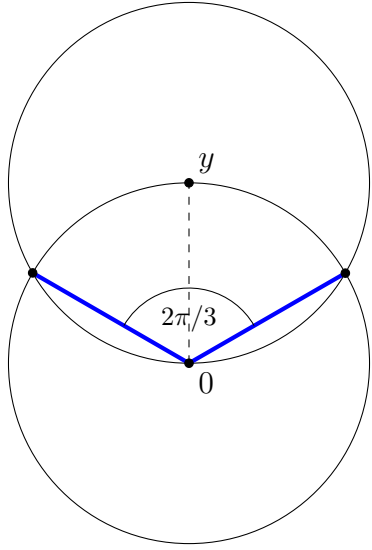

\medskip

\noindent {\bf Moment bound.} The moment bound follows exactly as in the kNN
graph.
\end{proof}

	\noindent\emph{Variance positivity.} As in the case of the nearest-neighbor graph discussed above, it suffices to show that, with positive probability, adding the origin does not remove any existing edges. Consider the event $\mc E$ that $\PP \cap B(0,1)=\es$ and $\PP(T_i(1+\eps)) \ge 1$ for all $i \in I = \{1,2,\dots,13\}$, where $\eps = 10^{-5}.$   This event occurs with positive probability. We verify that under this event $\mc E,$ adding the origin cannot remove any existing edges.

    To see this, consider two points $X_i, X_j \in \mc P_n$ forming an edge, and suppose that the addition of $0$ removes this edge. This means that  $0 \in B_{X_i,X_j}$. Recalling $\PP(T_\ell(1+\eps)) \ge 1$ for all $\ell\in I$ under the event $\mc E$, we obtain the contradiction once we prove the following claim.
    
 \textbf{Claim.} Under the event $\mc E$, we have $T_\ell(1+ \eps) \subseteq B_{X_i,X_j}$  for some $\ell \in I$.

 
To prove this claim, we fix two points $x,y \in \partial B_{X_i,X_j}$ with $|x-(X_i+X_j)/2| = (1+\eps)|X_i-X_j|/2$ and $|y-(X_i+X_j)/2| = (1+\eps)|X_i-X_j|/2$ such that $\langle X_i-x,x\rangle < 0 $ and $\langle X_j-y,y\rangle < 0 $ (see Figure \ref{fig:rng2}). Then the claim follows if we can show that (i) $\max\{|x|,|y|\} \ge 1+\eps$ and (ii) $\alpha:=\angle(y, 0, x) \ge \frac{4\pi}{13}$. Indeed, since the $T_\ell(1+\epsilon)$ have angular radius ${\pi}/{13}$,  the triangle $(y,0,x)$ contains some $T_\ell(1+\epsilon)$.
(i) Without loss of generality, assume that $|X_i| \le |X_j|$. Since the point $(X_i+X_j)/2$ minimizes the ratio $|x-z|/|X_i-z|$ among all points $z \in B_{X_i,X_j}$ with $|X_i-z| \le |X_j - z|$ and $\langle X_i-x ,x-z\rangle <0$, we have
$$
|x| = |X_i| \frac{|x|}{|X_i|} \ge |X_i| \frac{|x-(X_i+X_j)/2|}{|X_i-(X_i+X_j)/2|} = |X_i|(1+\eps)  \ge 1+\eps
$$
and, analogously, also $|y| \ge 1+\eps$.

\usetikzlibrary{intersections, calc, angles, quotes}
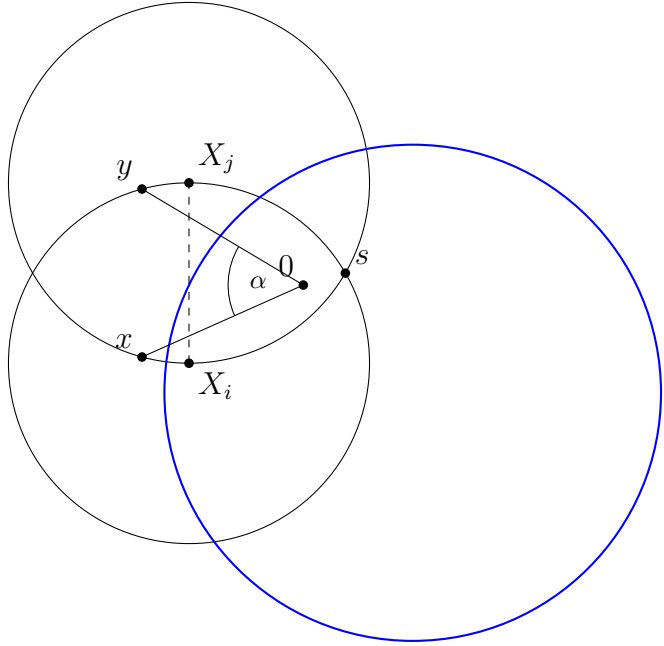
\begin{figure}
\begin{tikzpicture}[scale=0.8]
    \coordinate (y) at (2,1.5);
    \coordinate (O) at (2,-1.5);
    \coordinate (z) at (1.2193,-1.3967);
    \coordinate (z2) at (1.2193,1.3967);
    \coordinate (w) at (3.9,-0.2);

     \path[name path=diskO] (O) circle (3);     
    \path[name path=disky] (y) circle (3); 

    \path [name intersections={of=diskO and disky, by={A,B}}];

    \draw[name path=diskO] (O) circle (3.0);
    \draw[name path=disky] (y) circle (3.0);

    \filldraw (y) circle (2pt) node[above right] {\Large $X_j$};
    \filldraw (O) circle (2pt) node[below right] {\Large $X_i$};
    \filldraw (z) circle (2pt) node[above left] {\Large $x$};
    \filldraw (z2) circle (2pt) node[above left] {\Large $y$};
    \filldraw (w) circle (2pt) node[above left] {\Large $0$};
    \filldraw (A) circle (2pt) node[above right] {\Large $s$};

\draw[blue, thick] (5.718,-1.992) circle (4.126);

    \draw pic["$\alpha$", draw=black,-, angle radius=1cm]
        {angle=z2--w--z};

    \draw[] (z2)--(w);
    \draw[] (z)--(w);
    \draw[dashed] (O)--(y);

\end{tikzpicture}
\caption{The Apollonius circle of all points $z$ with equal ratio $\frac{|x-z|}{|X_i-z|}=\frac{|x-(X_i+X_j)/2|}{|X_i-(X_i+X_j)/2|}$ is drawn in blue. In particular, this ratio is larger for all points inside this circle.}
\label{fig:rng2}
\end{figure}

(ii) Let $s$ be the intersection point of $\partial B(X_i,|X_i-X_j|)$ and $\partial B(X_j,|X_i-X_j|)$ with $\langle X_i,s\rangle <0$. It follows from the inscribed angle theorem that the point $s$ minimizes the angle $\angle (y,z,s)$ among all points $z \in B_{X_i,X_j}$ with $\langle X_i-x,x-z\rangle <0$. Thus,
$$
\alpha=\angle (y,0,x) \ge \angle (y,s,x).
$$
Recalling $\eps = 10^{-5}$, we have $\angle(y, s, x) \ge \frac{4\pi}{13}$, which proves the claim.

%
\subsection{Outlook of possible further examples}
\label{ssec:out}

\subsubsection{Scale-Free Gilbert graphs}
As described above, the classical Gilbert graph (or RGG) connects pairs of points that lie within a fixed distance of each other.  
A scale-free Gilbert graph extends this model by assigning to each vertex a \emph{random} connection radius $R_i$, typically drawn from a heavy-tailed distribution \cite{braz}.  
Vertices $X_i$ and $X_j$ are connected if and only if $|X_i - X_j| \le R_i + R_j$.  
The resulting network is simultaneously geometric and heavy-tailed, capturing—for instance—communication systems in which transmission ranges or influence domains vary widely across nodes.

The heavy-tailed radii lead to behavior that can differ dramatically from the RGG.  
In particular, unless the tail index of the radius distribution is sufficiently large, the moment bounds used in the proof of Theorem~\ref{thm:polt1} do not hold.
Furthermore, when the degree distribution lacks a finite second moment, we conjecture that the limiting fluctuation of the spectral measure is no longer asymptotically normal but instead converge to an $\alpha$-stable law.

\subsubsection{Hyperbolic random graphs}Hyperbolic random graphs place vertices in a hyperbolic space, typically $\mathbb{H}^2$, and connect them with probabilities that decay with hyperbolic distance.  
A common construction samples points in the hyperbolic plane according to a density that produces power-law degree distributions \cite{gpp}.  
Each vertex is assigned a radial and an angular coordinate, where the radial component encodes a notion of “popularity’’ and the angular component represents “community.’’  
Edges are then formed with higher probability between vertices that are close in hyperbolic distance.  
The geometry of the hyperbolic metric induces an inherent hierarchical organization among nodes, making this framework a natural model for scale-free networks with strong clustering.

Hyperbolic random graphs share with the scale-free Gilbert model the feature of heavy-tailed degrees.  
Consequently, as in the scale-free Gilbert case, the moment bounds used in the proof of Theorem~\ref{thm:polt1} fail in this setting.  
When the degree distribution has infinite second moment, we likewise conjecture that the limiting fluctuations are no longer Gaussian but instead converge to a stable law.

\subsubsection{Delaunay triangulation} 
A  {Delaunay triangulation} of a planar point set is a triangulation with the property that no point lies in the interior of the circumcircle of any triangle. 
Equivalently, it maximizes the minimum angle over all triangulations of the point set, and it is the planar dual of the Voronoi diagram. 
The Delaunay triangulation is unique whenever no four points are co-circular; see \cite{rng} for further background and properties.

As in the \(k\)-nearest neighbor case, it is somewhat difficult to locate an explicit verification of the stabilization and moment conditions \eqref{eq:stabex} and \eqref{eq:stabexm} in the existing literature. 
Nevertheless, these properties follow from straightforward adaptations of the arguments in \cite[Section~9]{yukCLT}. 
To avoid redundancy, we omit the details.

The verification of variance positivity, however, is substantially more delicate than in the \(k\)-nearest neighbor setting. 
The key issue is that inserting a single point into a Delaunay triangulation necessarily deletes some existing edges, so the monotonicity argument used in Section~\ref{sss:rng} no longer applies. 
While we expect that variance positivity still holds, establishing it appears to require a finer and more model-specific analysis.

\bibliographystyle{abbrv}
\bibliography{lit}

\end{document}